\documentclass[10pt]{preprint}
\usepackage{color}

\usepackage{tikz}
\usepackage{amsmath}
\usepackage{amssymb}
\usepackage{amsthm}
\usepackage{mhfig}
\usepackage{mhequ}
\usepackage[margin=4cm]{geometry}
\usepackage{times}
\usepackage{microtype}
\usepackage{shuffle}

\usepackage{verbatim}

\setlength{\textwidth}{14cm}
\newcommand{\fixmap}{\mathcal{M}}

\newcommand{\defin}{\stackrel{{\rm{def}}}{=}}
\newcommand{\bigz}{\mathbf{Z}}
\newcommand{\bigy}{\mathbf{Y}}

\newcommand{\bigybar}{\bar{\bigy}}

\newcommand{\rtilde}{\tilde{r}}
\newcommand{\lambdatilde}{\tilde{\lambda}}

\newcommand{\CK}{\mathcal{K}}

\newcommand{\Ztilde}{\tilde{Z}}
\newcommand{\Ntensor}{T^{(N)}}

\def\forest{\mathcal{F}}
\def\hopf{\mathcal{H}}

\def\X{{\bf X}}
\def\Y{{\bf Y}}
\def\reals{\mathbb{R}}
\def\innerprod#1{\langle #1 \rangle}
\def\inner#1{\langle #1 \rangle}
\def\innerbig#1{\left\langle #1 \right\rangle}
\def\liloh{{o}}
\def\hom{{\rm Hom}}
\def\del{\partial}
\def\norm#1{\|#1\|}
\def\S{\mathcal{S}}
\def\G{\mathcal{G}}

\def\fbar{\bar{f}}

\def\trees{\mathcal{T}}

\def\sup{{\rm sup}}
\def\Id{{\rm Id}}

\def\spn{{\rm{span}}}
\def\naturals{\mathbb{N}}
\def\expstar{\exp_\star}
\def\logstar{\log_\star}

\def\B{\mathcal{B}}
\newcommand{\integration}{\mathcal{I}}
\newcommand{\partition}{\mathcal{P}}

\def\shufset{{\rm{Shuf}}}

\def\otimestilde{\mathop{\tilde{\otimes}}}
\def\otimest{\mathop{\tilde{\otimes}}}

\def\fbar{\bar{f}}
\def\xbar{\bar{X}}
\def\Xbar{\bar{\X}}

\def\Deltabar{\bar{\Delta}}
\def\deltabar{\bar{\delta}}

\def\Xhat{\widehat{\bf X}}
\def\xhat{\widehat{X}}

\def\S{\mathcal{S}}

\newcommand{\sym}{{{\rm sym}}}
\newcommand{\bigytilde}{\tilde{\mathbf{Y}}}

\newcommand{\alphabet}{\mathcal{A}}

\def\Xtilde{\tilde{\X}}
\newcommand{\controlled}{\mathcal{Q}_\X(\reals^e)}


\newcommand{\unit}{{\mathbf{1}}}
\newcommand{\counit}{{\mathbf{1}}^*}

\def\mytreeone#1{\def\firstlabel{\mbox{\tiny $#1$}}\mhpastefig{tree_one}}
\def\mytreeoneone#1#2{\def\firstlabel{\mbox{\tiny $#1$}}\def\secondlabel{\mbox{\tiny $#2$}}\mhpastefig{tree_oneone}}
\def\mytreeoneoneone#1#2#3{\def\firstlabel{\mbox{\tiny $#1$}}\def\secondlabel{\mbox{\tiny $#2$}}\def\thirdlabel{\mbox{\tiny $#3$}}\mhpastefig{tree_oneoneone}}
\def\mytreetwoone#1#2#3{\def\firstlabel{\mbox{\tiny $#1$}}\def\secondlabel{\mbox{\tiny $#2$}}\def\thirdlabel{\mbox{\tiny $#3$}}\mhpastefig{tree_twoone}}

\newcommand{\ntreeoneone}{
\begin{tikzpicture}
\draw (0,0)  
-- (0,1.3ex);
\draw[fill=black] (0,0) circle (0.38ex) ;
\draw[fill=black] (0,1.3ex) circle (0.38ex);
\end{tikzpicture}}

\newcommand{\treeoneone}{
\begin{tikzpicture}
\draw (0,0)  
-- (0,1ex);
\draw[fill=black] (0,0) circle (0.3ex) ;
\draw[fill=black] (0,1ex) circle (0.3ex);
\end{tikzpicture}}



\theoremstyle{plain}
\newtheorem{thm}{Theorem}[section]
\newtheorem{lemma}[thm]{Lemma}
\newtheorem{prop}[thm]{Proposition}
\newtheorem{corr}[thm]{Corollary}

\theoremstyle{definition}
\newtheorem{defn}[thm]{Definition}
\newtheorem{example}[thm]{Example}

\newtheorem{rmk}[thm]{Remark}
\newtheorem{remark}[thm]{Remark}

\numberwithin{equation}{section}

\begin{document}
%
\title{Geometric versus non-geometric rough paths}
\author{Martin Hairer$^1$ and David Kelly$^2$}
\institute{The University of Warwick, \email{M.Hairer@Warwick.ac.uk}
\and The University of Warwick, \email{David.Kelly@Warwick.ac.uk}}
\maketitle\thispagestyle{empty}

\maketitle
\begin{abstract}In this article we consider rough differential equations (RDEs) driven by non-geometric rough paths, using the concept of branched rough paths introduced in \cite{gubinelli10a}. We first show that branched rough paths can equivalently be defined as $\gamma$-H\"older continuous paths in some Lie group, akin to geometric rough paths. We then show that  every branched rough path can be \emph{encoded} in a geometric rough path. More precisely, for every branched rough path $\X$ lying above a path $X$, there exists a geometric rough path $\Xbar$ lying above an extended path $\xbar$, such that $\Xbar$ contains all the information of $\X$. As a corollary of this result, we show that every RDE driven by a non-geometric rough path $\X$ can be rewritten as an extended RDE driven by a geometric rough path $\Xbar$. One could think of this as a generalisation of the It\^o-Stratonovich correction formula. 
\end{abstract}
\section{Introduction}
The so-called \emph{controlled differential equations} have become an important class of dynamical systems throughout the last half century, the most notable example being the It\^o diffusions. Roughly speaking, these systems take the form
\begin{equ}\label{e:intro_SDE}
dY_t = \sum_i f_i(Y_t)dX_t^i\;, \quad Y_0 = \xi \;, 
\end{equ}
where $X$ and $Y$ are paths in vector spaces $V$ and $U$ respectively, with $X = (X^i)$ and $X_0 = 0$, and where the vector fields $f_i : U \to U$ are smooth non-linear functions. For simplicity, we will always assume that $V$ and $U$ are finite dimensional, with $V=\reals^d$ and $U=\reals^e$, so that there
is a canonical identification between these spaces and their duals.
\par
For a path $X$ of bounded variation, the notion of a solution is unambiguously defined using any variant of Riemann-sum style integration. However, for a less regular $X$ this isn't always the case. For example, let $X$ be a sample path of Brownian motion in $\reals^d$, which is (almost surely) $\gamma$-H\"older continuous, for every $\gamma < 1/2$. It is clear that the solution $Y$ depends on how one interprets the integral in \eqref{e:intro_SDE}. In particular, both It\^o and Stratonovich integrals provide two distinct notions of a solution. Another way of looking at it is that there is something \emph{missing} from \eqref{e:intro_SDE}, namely, the blueprint of how to construct integrals against $dX$. The theory of rough paths, first introduced by T.~Lyons in \cite{lyons98}, provides an elegant way of encoding this missing ingredient.      
\par
Instead of viewing \eqref{e:intro_SDE} as an equation controlled by $X$, one should recast it (formally speaking) as 
\begin{equ}\label{e:intro_RDE}
dY_t = \sum_i f_i(Y_t)d\X_t^i\;, \quad Y_0 = \xi \;, 
\end{equ}
an equation controlled by a path $\X$, known as a \emph{rough path}, that is an extension of $X$, taking values in a much bigger (non-linear) space. The equation \eqref{e:intro_RDE} is known as a \emph{rough differential equation} (RDE). The extra components of $\X$ provide the necessary information on how to interpret those integrals encountered in controlled differential equations, hence they provide the information that was missing in \eqref{e:intro_SDE}. This interpretation has proved extremely useful in the framework of It\^o diffusions, most notably in illustrating the continuity properties of the It\^o map.    
\par
However, when the driving path $X$ has H\"older regularity $\gamma \leq 1/3$, one must impose an extra condition to ensure that equations like \eqref{e:intro_SDE} can still be treated in the framework of rough paths. Namely, the integrals in \eqref{e:intro_SDE} must obey ``the usual rules of calculus'' in that, like Stratonovich integrals, they must satisfy the ordinary chain-rule and integration by parts formulae, without any correction terms. This framework has been used, for example, in the analysis of equations driven by fractional Brownian motion with Hurst parameter $H > 1/4$ \cite{coutin02,victoir10,hairer12}.    
\par
In certain situations, the geometric framework is not an appropriate model for a stochastic system. For example, in some financial models, an It\^o type integral is more appropriate than Stratonovich, since the latter scheme requires one to ``look into the future". More generally, it is often the case that natural approximations to stochastic integrals \emph{do not} converge to objects for which the usual change of variables formula holds. Indeed, discrete
approximations to an integral do not in general have any reason to satisfy the integration by parts formula exactly.
While the resulting error term would vanish when integrating smooth functions against each other,
this does not always happen in the stochastic case where integrands and integrators are typically very rough. 
The most famous example of this is of course the It\^o integral, however the phenomenon is also widespread in 
the world of non semi-martingales \cite{burdzy96,russo00,nourdin05,swanson10}. Thus, the limiting objects from discretisation schemes are often \emph{non}-geometric. Recently, M.~Gubinelli introduced the notion of  a \emph{branched rough path}, which is an extension of the original formulation, created to extend the scope of rough path theory to such non-geometric situations \cite{gubinelli10a}. 

As we will see below, this extension does actually not alter the fundamental theory of rough paths at all, but merely requires that some \emph{additional} components be added to the rough path $\X$. Indeed, the main
result of this article, Theorem~\ref{thm:intro1} below, shows that the solution to a differential equation driven by a 
branched rough path can always be recovered as the solution to a (usually different) differential equation driven by
a geometric rough path.
Before introducing branched rough paths, we will first give an overview of how geometric rough paths are used to solve controlled differential equations. 

\subsection{Geometric rough paths}\label{sec:geometric}
The missing ingredients contained in the rough path $\X$ can be interpreted as the \emph{iterated integrals} of $X$. If $X$ takes values in $V$, then $\X$ takes values in $T((V))$, the topological dual of the tensor product algebra $T(V)$, defined by
\begin{equ}
T(V) = V \oplus V^{\otimes 2} \oplus \dots \;.
\end{equ}   
Hence, $T((V))$ can be identified with formal tensor series on $V$. The lowest order components, are simply the components of $X$, in that
\begin{equ}
\inner{\X_t,e_i} = X_t^i \;,
\end{equ}
for $i=1 \dots d$, where $e_i$ is the $i$-th basis vector of $V$. The higher order components are (formally) given by the iterated integrals
\begin{equ}\label{e:intro_iterated}
\inner{\X_t, e_{i_1 \dots i_n}} \defin \int_0^t \dots \int_0^{r_{2}} dX_{r_1}^{i_1} \dots dX_{r_n}^{i_n}\;,
\end{equ}
for $i_1,\dots,i_n = 1 \dots d$, where we use the shorthand $e_{i_1 \dots i_n} = e_{i_1}\otimes \dots \otimes e_{i_n}$. Of course, this is only defined formally since the above integrals cannot be constructed for an arbitrary $X$. Hence, one should think that a given rough path $\X$ \emph{defines} the integral on the right hand side of \eqref{e:intro_iterated}. 
\par
The concept of satisfying the ``usual rules of calculus'' is encapsulated by requiring that 
\begin{equ}\label{e:intro_shuffle}
\inner{\X_t,e_{i_1 \dots i_n}}\inner{\X_t,e_{j_1 \dots j_m} } = \inner{\X_t,e_{i_1 \dots i_n} \shuffle e_{j_1 \dots j_m}}\;,
\end{equ} 
for all tensors $e_{i_1 \dots i_n}$, $e_{i_1 \dots i_n}$ and where $\shuffle$ denotes the shuffle product \cite{reutenauer93}. The shuffle product $w\shuffle v$ of two words $w,v$ is given the sum of all words that are obtained by combining and rearranging the words $w,v$ whilst also preserving their original orderings. For example,
\begin{align*}
e_i \shuffle e_j &= e_{ij} + e_{ji}\\
e_i \shuffle e_{jk} &= e_{ijk} + e_{jik} + e_{jki}\;,
\end{align*}
note that $e_{kji}$ does not appear in the second expression since it does not preserve the ordering $jk$. Hence, we have that, for example
\begin{equ}
\inner{\X_t,e_i}\inner{\X_t,e_j} = \inner{\X_{t},e_{ij}} + \inner{\X_{t},e_{ji}}\;,
\end{equ}
which, by substituting \eqref{e:intro_iterated}, gives the usual integration by parts formula. Hence, one should think of \eqref{e:intro_shuffle} as a generalisation of the integration by parts formula to higher order iterated integrals. 
\begin{remark}
It is well known that when $X$ is smooth and the rough path $\X$ is constructed \emph{canonically} using Riemann integrals, then the identity \eqref{e:intro_shuffle} is always satisfied \cite{chen77}. 
\end{remark}
Of course, for a fixed $t$, the object $\X_t$ cannot be \emph{any} element of the truncated tensor product algebra. Instead, $\X_t$ lives in a special subset, which happens to be a Lie group, denoted by $(G(V),\otimes)$, called the \emph{free nilpotent group}, with the group operation given by the tensor product. This is defined by 
\begin{equ}
G(V) \defin \exp \G(V)\;,
\end{equ} 
where $\G(V) \subset T((V))$ is the space of formal Lie series generated by $V$ and where $\exp$ is the tensor exponential. The group $G(V)$ can equally be defined as the \emph{group-like} elements or \emph{characters} with respect to the shuffle product, which ensures \eqref{e:intro_shuffle}. These algebraic ideas will be made concise in Section \ref{sec:geometric}.  
\par
When solving controlled differential equations, it is often more convenient to work with the increment $\delta X_{st} \defin X_t -X_s$ instead of the path $X_t$. The same is true of rough paths, hence we define 
\begin{equ}
\X_{st} \defin \X_s^{-1}\otimes \X_t\;,
\end{equ}
where $\X_s^{-1}$ denotes the group inverse of $\X_s$. This yields the following definition, which is equivalent
to the one given in \cite{lyons98,friz10}:

\begin{defn}
A \emph{weak geometric rough path of regularity $\gamma$} is a map $\X:[0,T]\times[0,T]\to T((V))$ satisfying the following three conditions
\begin{enumerate}
\item$\innerprod{\X_{st},x \shuffle y} = \innerprod{\X_{st},x}\innerprod{\X_{st},y}$, for every $x,y \in T(V)$,
\item$\X_{st} = \X_{su}\otimes \X_{ut}$,
\item $\sup_{s\neq t} {|\innerprod{\X_{st},w}|}/{|t-s|^{\gamma |w|}} < \infty$,
for every $w \in T(V)$ with $|w| \leq N$, where $|w|$ denotes the number of letters composing the word $w$, which we will refer to as the \emph{length} of the word $w$.
\end{enumerate}
\end{defn}

\begin{rmk}
There is a subtle difference between weak geometric rough paths and geometric rough paths \cite{friz06}. In this article we only refer to the weak kind and will henceforth omit the prefix.
\end{rmk}

\begin{rmk}
By definition of the group $G(V)$, we could equivalently say that a geometric rough path $\X$ is a function $\X : [0,T]\times[0,T] \to G(V)$ that satisfies properties $(2)$ and $(3)$.
\end{rmk}
\par
\begin{rmk}\label{rmk:truncate}
One of the crucial properties of a geometric rough path $\X$ of regularity $\gamma$ is that only finitely many components actually matter. To be precise, let $N$ be the larger integer such that $N\gamma\leq1$, then one can show that all components $\inner{\X_{st},e_{i_1 \dots i_n}}$ for $n>N$ are uniquely determined by those elements with $n \leq N$, see \cite[Theorem 2.2.1]{lyons98}. Intuitively, these larger components are `regular enough' to be defined in a canonical way. Moreover, we will see that when solving a differential equation using $\X$, the components with $n>N$ become negligible in an expression for the solution. For these reasons, one often defines a geometric rough path as taking values in the truncated group $G^{(N)}(V)$, defined by simply discarding those components of elements in $\G(V)$ indexed by more than $N$ letters. These ideas will be made precise in Section \ref{sec:geometric}. The intention of defining the geometric rough path in the above fashion is to draw the connection between itself and the branched rough path, which will be introduced in the following subsection. 
\end{rmk}

One simple example of a rough path is the canonical rough path constructed above a smooth. Since the works of Chen \cite{chen77}, it has been known that if $X$ is a smooth path, then the quantities
given by 
\begin{equ}
\inner{\X_{st}, e_{i_1 \dots i_n}} \defin \int_s^t \dots \int_s^{r_{2}} dX_{r_1}^{i_1} \dots dX_{r_n}^{i_n}\;,
\end{equ}
do indeed satisfy the two algebraic relations given in the above definition.
\par
To solve the RDE \eqref{e:intro_SDE}, we adopt the idea of \emph{controlled rough paths}, introduced in \cite{gubinelli04}; the key observation is that $Y$ is \emph{locally controlled} by the rough path $\X$. We will illustrate this by assuming that $1/4<\gamma \leq 1/3$, so that $N=3$. As usual, we assume that $V=\reals^d$, $U=\reals^e$ and that $f(Y)dX = \sum_{i=1}^d f_i(Y)dX^i$, where the vector fields $f_i : \reals^e \to \reals^e$ are smooth. We will denote by $f_{i}^\alpha$ the $\alpha$-th coordinate of the vector field $f_i$. Then \eqref{e:intro_SDE} can be written in the integral form
\begin{equ}\label{e:integral_SDE}
\delta Y_{st} = \int_s^t f_i(Y_v) dX^i_v\;,
\end{equ}
where we omit the sum notation. If we perform a Taylor expansion of $f_i$ around $Y_s$ and repeatedly substitute \eqref{e:integral_SDE} back in to itself, then we formally obtain
\begin{align}
\delta Y_{st} &\approx  f_i(Y_s)\int_s^t dX_{v_1}^i + f^{\alpha_1}_j(Y_s) \del^{\alpha_1}f_i(Y_s) \int_s^t \int_s^{v_1} dX_{v_2}^{j}dX_{v_1}^i\label{e:controlled_expansion} \\
&+ f_k^{\alpha_1}(Y_s) \del^{\alpha_1} f_j^{\alpha_2}(Y_s) \del^{\alpha_2} f_i(Y_s) \int_s^t \int_s^{v_2} \int_s^{v_1} dX_{v_3}^{k}dX_{v_2}^j dX_{v_1}^i\notag \\
&+ \frac{1}{2}f_k^{\alpha_1}(Y_s) f_j^{\alpha_2}(Y_s) \del^{\alpha_1 \alpha_2}f_i(Y_s) \int_s^t \left(\int_s^{v_3} dX_{v_1}^k\right)\left(\int_s^{v_3} dX_{v_2}^j\right)  dX_{v_3}^i\;,  \notag 
\end{align}
where the error is of order $|t-s|^{4\gamma}$ and hence $\liloh(|t-s|)$ for $|t-s| \ll 1$. Now, all of the above integrals are components of $\X_{st}$. For instance, 
\begin{align*}
\int_s^t dX^i_{v_1} &= \inner{\X_{st},e_i}\quad , \quad \int_s^t \int_s^{v_1} dX_{v_2}^{j}dX_{v_1}^i = \inner{\X_{st},e_{ji}}\\ 
&\int_s^t \int_s^{v_2} \int_s^{v_1} dX_{v_3}^{k}dX_{v_2}^j dX_{v_1}^i = \innerprod{\X_{st},e_{kji}}\;.
\end{align*}
The non-trivial term must be understood using the shuffle product. Indeed, the identity \eqref{e:intro_shuffle} guarantees that 
\begin{align*}
\left(\int_s^{v_3} dX_{v_1}^k\right)\left(\int_s^{v_3} dX_{v_2}^j\right) &= \innerprod{\X_{st},e_k}\innerprod{\X_{st},e_j} = \innerprod{\X_{st},e_i\shuffle e_j}\\  &\:= \innerprod{\X_{st},e_{kj}} + \innerprod{\X_{st},e_{jk}}\;,
\end{align*}
and hence we define
\begin{equ}\label{e:vtree}
\int_s^t \left(\int_s^{v_3} dX_{v_1}^k\right)\left(\int_s^{v_3} dX_{v_2}^j\right)  dX_{v_3}^i \defin \innerprod{\X_{st},e_{kji}} + \innerprod{\X_{st},e_{jki}}\;.
\end{equ}
It should then be clear that $Y$ looks locally like $\X$, in the sense that 
\begin{equ}
\delta Y_{st} \approx \sum_{e_{i_1 \dots i_n}} F_{e_{i_1 \dots i_n}} (Y_s) \innerprod{\X_{st},e_{i_1 \dots i_n}} \;,
\end{equ}
where we sum over all basis elements $e_{i_1 \dots i_n} \in T^{(N)}(V)$ and where $F_{e_{i_1 \dots i_n}}: \reals^e \to \reals^e$ are the coefficients from \eqref{e:controlled_expansion}. One then constructs $Y$ over all of $[0,T]$ by \emph{sewing together} the increments $Y_t - Y_s$ over small intervals. The $\liloh(|t-s|)$ terms disappear as we sum over smaller and smaller intervals.

\subsection{Non-geometric rough paths}

Whereas a geometric rough path lives in a tensor product algebra generated by $V=\reals^d$, a branched rough path lives in the Hopf algebra generated by the set of rooted, labelled trees $\trees$ with vertex decorations from the set $\{1,\dots,d \}$. This space is known as the \emph{Connes-Kreimer Hopf algebra} and was famously used in \cite{connes98} in the context of renormalization theory. In general, a Hopf algebra consists of a vector space $\hopf$, equipped with a product $\cdot : \hopf \otimest \hopf \to \hopf$ and a coproduct $\Delta : \hopf \to \hopf \otimest \hopf$, see the standard textbook \cite{sweedler69}. 
As an algebra, $\hopf$ will
simply be the set of abstract polynomials, where we consider the elements of $\trees$ as commuting indeterminates.
The product $\cdot$ is then the usual (commutative) product between polynomials and the basis elements for the vector space $\hopf$ are simply all monomials in the indeterminates from $\trees$. We will frequently omit the product $\cdot$ from the notation, for instance writing $\tau_1 \tau_2$ for the product of $\tau_1$ and $\tau_2$. The coproduct $\Delta$ is the dual of a more interesting product $\star$, also known as the \emph{convolution product}. Much like the deconcatenation coproduct describes all ways of cutting apart a tensor, the coproduct $\Delta$ describes all ways of cutting apart a tree. For an introduction to Hopf algebras aimed towards the Connes-Kreimer algebra, see the monograph \cite{manchon04}. 

The following is a slight rewriting of the definition given in \cite{gubinelli10a}:

\begin{defn}
A \emph{branched rough path of regularity $\gamma$} is a map $\X:[0,T]\times[0,T]\to\hopf^*$ satisfying the following three conditions
\begin{enumerate}
\item$\innerprod{\X_{st},h_1 h_2} = \innerprod{\X_{st},h_1}\innerprod{\X_{st},h_2}\;,$ \quad \text{for every $h_1,h_2 \in \hopf$\;. }
\item$\X_{st} = \X_{su}\star \X_{ut} \quad \text{or equivalently}\quad \innerprod{\X_{st},h} = \sum_{(h)} \innerprod{\X_{su},h^{(1)}}\innerprod{\X_{ut},h^{(2)}}$\;, \\
where $\Delta h = \sum_{(h)} h^{(1)} \otimestilde h^{(2)}$ and $h\in\hopf$.
\item $\sup_{s\neq t} {|\innerprod{\X_{st},\tau}|}/{|t-s|^{\gamma |\tau|}} < \infty$\;,\\
for every $\tau \in \hopf$, where $|\tau|$ counts the number of vertices in $\tau$.
\end{enumerate}
\end{defn}

\begin{rmk}
As was the case with geometric rough paths, for branched rough paths only finitely many components $\inner{\X_{st},\tau}$ actually matter. As always, let $N$ be the largest integer such that $N\gamma \leq1$, then the components $\inner{\X_{st},\tau}$ with $|\tau| > N$ are determined by those with $|\tau| \leq N$ \cite{gubinelli10a} and moreover the components with $|\tau| >N$ never show up in expressions for solutions of differential equations.
\end{rmk}

\begin{remark}
Here, we used the notation $\otimestilde$ for elements in the tensor product of $\hopf$ with itself.
The reason for not using the standard notation $\otimes$ is because the latter will be reserved for the 
tensor product within the tensor algebra built over some vector space, as in Section~\ref{sec:geometric}.
\end{remark}

Condition $1$ confirms that the polynomial product plays the role of the shuffle product in $\hopf$. That is, it picks out some object $h_1 h_2$ so that $\innerprod{\X,h_1 h_2} = \innerprod{\X,h_1}\innerprod{\X,h_2}$. The fact that this product is commutative in both theories is a reflection of the fact that the usual product between smooth functions
is commutative.
Condition $2$ is a natural requirement of any iterated integral. Indeed, no matter how one defines an integral, it should always be linear with respect to the integrand, and satisfy $\int_s^t = \int_s^u + \int_u^t$. Condition $2$
encapsulates this identity in our context, if we interpret the components of $\X$ in the way described below. 
Condition $3$ reflects the fact that the integral $\innerprod{\X_{st},\tau}$ should be $|\tau|$ times as regular as the underlying path $X$; it is a purely analytic condition, as opposed to the first two purely algebraic conditions. 
\par 
We will now illustrate the definition with the example of $\gamma \in (1/4, 1/3]$. Here, we would have
\begin{align*}
&\int_s^t dX^i_{v_1} = \innerprod{\X_{st},\mytreeone{i}\;\;\;}\;,\;\int_s^t \int_s^{v_1} dX^j_{v_2} dX^i_{v_1} = \innerprod{\X_{st},\mytreeoneone{j}{i}\;\;\;}\;\;\\ \text{and} \;\;\;\;&\int_s^t\int_s^{v_1} \int_s^{v_2} dX^k_{v_3}dX^j_{v_2} dX^i_{v_1} = \innerprod{\X_{st},\mytreeoneoneone{k}{j}{i}\;\;\;}\;,
\end{align*}
as well as the \emph{branched} object
\begin{equ}
\int_s^t \left(\int_s^{v_3} dX_{v_1}^k\right)\left(\int_s^{v_3} dX_{v_2}^j\right)  dX_{v_3}^i = \innerprod{\X_{st},\mytreetwoone{k}{j}{i}\;\;}\;.
\end{equ}
In general, components of $\X$ should be interpreted as in Remark~\ref{rem:canonical} below.
Essentially, every node corresponds to one integration, with each incoming branch denoting
a factor of the integrand. 

In the above example, the only additional objects in our non-geometric rough path are the components corresponding to $\mytreetwoone{}{}{}$. Contrary to the case of geometric rough paths, we cannot use the integration by parts formula to simplify these further.
As $N$ increases (or $\gamma$ decreases), a branched rough path becomes much larger than a geometric rough path. For $\tau = \mytreeoneone{j}{i}\;\;$, Condition $(2)$ becomes the familiar identity for the L\'evy area
\begin{equ}
\innerprod{\X_{st},\mytreeoneone{j}{i}\;\;\;} = \innerprod{\X_{su},\mytreeoneone{j}{i}\;\;\;} + \innerprod{\X_{ut},\mytreeoneone{j}{i}\;\;\;} + \innerprod{\X_{su},\mytreeone{j}\;\;\;}\innerprod{\X_{ut},\mytreeone{i}\;\;\;}\;,
\end{equ}
or in the language of the coproduct
\begin{equ}
\Delta \mytreeoneone{j}{i}\;\;\; = \mytreeoneone{j}{i}\;\;\otimestilde1 + 1\otimestilde\;\mytreeoneone{j}{i}\;\;\; + \mytreeone{j}\;\;\otimestilde\;\mytreeone{i}\;\;\;.
\end{equ}
Let us again consider the solution to \eqref{e:intro_SDE}, now driven by a branched rough path $\X$ with $1/4 < \gamma \leq 1/3$. From \eqref{e:controlled_expansion}, we would have
\begin{equ}\label{e:intro_controlled}
Y_t - Y_s \approx \sum_{\tau} f_{\tau}(Y_s)\innerprod{\X_{st},\tau}
\end{equ}
where we sum over all $\tau \in \trees_3$, or in the case of arbitrary $\gamma$, all $\tau \in \trees_N$, the set of $\tau \in \trees$ with $|\tau|\leq N$. Hence, the idea of viewing the solution to \eqref{e:intro_SDE} as an object that locally ``looks like" $\X$ carries through nicely to the framework of non-geometric rough paths. The coefficients $f_\tau$ are known as the \emph{Butcher coefficients}, in honour of J. Butcher who was the first to represent solutions to ODEs as a series indexed by trees, which turned out to be a very fruitful approach to the development of 
numerical methods for the
solutions to ODEs \cite{butcher72,hairer74,MR2657947}.

\subsection{Converting non-geometric to geometric}

The main objective of the article is to provide a translation between branched rough paths and geometric rough paths. The first step is to rephrase branched rough paths in the language of geometric rough paths. For a geometric rough path, Chen's property is not a definition, but is a corollary from the definition $\X_{st} = \X_{s}^{-1}\otimes \X_t$. However, for a branched rough path, this is considered part of the definition. We will show that a branched rough path can equivalently be defined as a a path $\X: [0,T] \to G_N$, where $(G_N,\star)$ is the (truncated) Lie group of \emph{characters} in the Connes-Kreimer Hopf algebra, satisfying
\begin{equ}
\innerprod{g,xy} = \innerprod{g,x}\innerprod{g,y} \;, 
\end{equ}
for all $x,y \in \hopf$. This allows us to define $\X_{st} = \X_{s}^{-1}\star \X_t$ and hence guarantee Chen's property from the definition. The Lie group $(G_N,\star)$ bears great similarity to the step $N$ free nilpotent group, since it is the truncated set of characters in $\hopf$, and the step $N$ free nilpotent group is the truncated set of characters in the tensor product algebra $T(V)$. Moreover, one obtains $G_N$ as the $\expstar$ of the Lie algebra of so-called primitive elements, where $\expstar$ is simply the tensor exponential, with tensor products replaced with $\star$ products.  
\par
Unsurprisingly, it is easy to show that a geometric rough path is a type of branched rough path. The main result of the article provides a surprising converse statement, namely that every branched rough path over a path can be \emph{encoded} in a geometric rough path. More precisely, for any branched rough path $\X$ above $X$ there exists a geometric rough path $\Xbar$ above $\xbar$, where $\xbar$ is an extension of $X$ and $\Xbar$ contains all the information held in $\X$. 
\par
The path $\xbar$ will take values in $\B_N$, where we define $\B_n$ as the real vector space spanned by the set $\trees_n$. Clearly, one can think of $X$ as taking values in 
\begin{equ}
\B_1 \defin \spn \{\bullet_i : i=1\dots d \} \cong \reals^d\;.
\end{equ}
Under this interpretation, $\xbar$ is an extension of $X$ in the sense that $\pi_{\B_1}(\xbar)=X$, where $\pi_V$ denotes projection onto $V$. The geometric rough path $\Xbar$ lives in the truncated tensor product space 
\begin{equ}
T^{(N)}(\B_N)  = \spn \{\tau_1 \otimes \dots \otimes \tau_n : \text{$\tau_i \in \trees_N$ and $1\leq n \leq N$} \} \;.
\end{equ}
Thus, since $\tau$ is a basis vector of the underlying vector space $\B_N$, the object $\inner{\Xbar_{st},\tau}$ will actually denote a \emph{path} component of $\Xbar$, in that \begin{equ}
\inner{\Xbar_{st},\tau} =\delta \xbar^{\tau}_{st}\;,
\end{equ}
for all $\tau\in\trees_N$, as opposed to the original $\inner{\X_{st},\tau}$ which must be interpreted as a integral component, indexed by the tree $\tau$. Moreover, the tensor components must be interpreted as the iterated integrals
\begin{equ}
\inner{\Xbar_{st},\tau_1 \otimes \dots \otimes \tau_n} \defin \int_s^t \dots \int_s^{v_2} d\xbar^{\tau_1}_{v_1} \dots d\xbar^{\tau_n}_{v_n}\;.
\end{equ}
We will prove the following result. As always, $\gamma \in (0,1)$ and $N$ is the largest integer such that $N\gamma \leq 1$. 

\begin{thm}\label{thm:intro1}
Let $X=(X^i)_{i=1\dots d}$ be a path in $\reals^d$ and $\X$ a $\gamma$-H\"older branched rough path in $\hopf$ such that $\innerprod{\X_{st},\bullet_{i}} = \delta X^i_{st}$. Then there exists 
\begin{enumerate}
\item a path $\xbar = (\xbar^\tau)_{\tau \in \trees_N}$ taking values in $\B_N$, with $\pi_{\B_1}(\xbar) = X$\;,
\item a $\gamma$-H\"older geometric rough path $\Xbar$ in $T^{(N)}(\B_N)$ satisfying
$\innerprod{\Xbar_{st},\tau} = \delta \xbar^\tau_{st}$ for each $\tau \in \trees_N$
and
\item a graded morphism of Hopf algebras $\psi:\hopf \to T(\B_N)$\;,
\end{enumerate}
such that
\begin{equ}\label{e:intro_psi}
\innerprod{\X_{st},h} = \innerprod{\Xbar_{st},\psi(h)}\;,
\end{equ}
for every $h \in \hopf$.    
 \end{thm}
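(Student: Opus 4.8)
The plan is to pass to the Lie-group picture and reduce the statement to an elementary fact about free Lie algebras. By the reformulation of branched rough paths discussed above, $\X$ may be viewed as a $\gamma$-H\"older path $t\mapsto\X_t\in G_N$ with $\X_{st}=\X_s^{-1}\star\X_t$, and $G_N=\expstar\Lie(G_N)$, where $\Lie(G_N)$, the space of primitive elements of $\hopf^*$ truncated at degree $N$, is a graded nilpotent Lie algebra. The one structural input I will use is that, since $\hopf$ is the polynomial algebra on $\trees$, its space of indecomposables $\bar\hopf/\bar\hopf^2$ is spanned by the trees, and dually $\Lie(G_N)$ is spanned by the functionals $\{\tau^*:\tau\in\trees_N\}$ dual to the trees, with $\tau^*$ placed in degree $|\tau|$. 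On the other side, $G^{(N)}(\B_N)$ is the set of shuffle characters of $T^{(N)}(\B_N)$; it equals $\exp$ of the free Lie algebra over $\B_N$ truncated at degree $N$ (the generator $\tau\in\trees_N$ sitting in degree $|\tau|$), whose Lie algebra is precisely that free Lie algebra.

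The key step is the construction of $\psi$. Define a graded linear map from $\B_N$ into $\Lie(G_N)$ by $\tau\mapsto\tau^*$; by the universal property of the free Lie algebra it extends to a graded Lie-algebra morphism $\phi$ from the truncated free Lie algebra over $\B_N$ onto $\Lie(G_N)$ --- onto, because the $\tau^*$ span the target. Passing to enveloping algebras and taking graded duals turns $\phi$ into a graded morphism of Hopf algebras $\psi\colon\hopf\to T(\B_N)$, which is injective precisely because $\phi$ is surjective, and which satisfies $\psi(\bullet_i)=\bullet_i$. (Equivalently, one can build $\psi$ directly by induction on $|\tau|$: put $\psi(\bullet_i)=\bullet_i$, extend multiplicatively using that $\hopf$ is free commutative and that the shuffle product is commutative, and at each stage use cofreeness of the deconcatenation coalgebra on $\B_N$ to choose $\psi(\tau)$ matching the already-defined image of the reduced Connes--Kreimer coproduct of $\tau$; the usual bialgebra bootstrap then shows the resulting $\psi$ is a coalgebra morphism.) Dually, $\psi$ restricts on group-like elements to a surjective morphism of Lie groups $\Psi\colon G^{(N)}(\B_N)\to G_N$.

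Now transport the path. Choose a graded linear right inverse $s$ of $\phi$ (a graded complement of $\ker\phi$); then $\sigma:=\exp\circ s\circ\logstar$ is a smooth, graded (dilation-equivariant) section of $\Psi$. Set $\Xbar_t:=\sigma(\X_t)$, $\Xbar_{st}:=\Xbar_s^{-1}\otimes\Xbar_t$, and $\xbar^\tau_t:=\inner{\Xbar_t,\tau}$ for $\tau\in\trees_N$, so that $\xbar=(\xbar^\tau)_{\tau\in\trees_N}$ is a path in $\B_N$. Then $\Xbar_{st}=\Xbar_s^{-1}\otimes\Xbar_t$ gives Chen's relation for free and makes $\Xbar_{st}$ group-like, hence $\Xbar$ satisfies the shuffle identity and is a geometric rough path; since length-one words are primitive for deconcatenation, $\inner{\Xbar_{st},\tau}=\delta\xbar^\tau_{st}$; since $\phi$, and hence $\sigma$, is the identity in degree one, $\xbar^{\bullet_i}=X^i$, i.e.\ $\pi_{\B_1}(\xbar)=X$; and since $\Psi\circ\sigma=\Id$ we get $\Psi(\Xbar_{st})=\Psi(\Xbar_s)^{-1}\star\Psi(\Xbar_t)=\X_s^{-1}\star\X_t=\X_{st}$, which --- $\Psi$ being dual to $\psi$ on group-like elements --- is exactly $\inner{\Xbar_{st},\psi(h)}=\inner{\X_{st},h}$, i.e.\ \eqref{e:intro_psi}.

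The one genuinely analytic point, and the step I expect to be the main obstacle, is verifying that $\Xbar$ has the asserted $\gamma$-H\"older regularity, i.e.\ $\inner{\Xbar_{st},w}=O(|t-s|^{\gamma|w|})$ for every word $w$. This should follow from the fact that $\sigma$ is smooth and graded: the degree-$n$ component of $\logstar\Xbar_{st}$ is then a smooth function of the degree-$\le n$ components of $\logstar\X_{st}$ that vanishes when $\X_s=\X_t$; equivalently, $\sigma$ is Lipschitz from $G_N$ to $G^{(N)}(\B_N)$ for the respective homogeneous (Carnot--Carath\'eodory) distances, uniformly on the bounded set traversed by $\X$, so that the homogeneous norm of $\Xbar_{st}$ is $\lesssim$ that of $\X_{st}$, which is $\lesssim|t-s|^\gamma$ by hypothesis. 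Finally, keeping careful track of the truncation at degree $N$ throughout --- so that components indexed by trees with more than $N$ vertices play no role on either side --- is routine bookkeeping.
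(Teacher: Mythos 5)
Your construction of $\psi$ is correct and in fact yields the same Hopf algebra morphism as the paper's, by a genuinely slicker route: rather than the paper's explicit recursion $\psi(\tau)=\tau+\psi(\tau^1)\otimes\tau^2$ verified in Lemma~\ref{lem:psiexist}, you send $\tau\mapsto\tau^*$ into the primitives of $\hopf^*$, invoke the universal property of the free Lie algebra over $\B_N$ to obtain a surjective graded Lie morphism $\phi$, pass to enveloping algebras, and dualize. Since $\psi^*$ is then the unique $\star$-algebra morphism with $\psi^*(\tau)=\tau$ on $\B_N$, this is exactly the map the paper uses. So part (3) of the theorem, and the algebraic half of the argument, you have nailed.

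The gap is in the construction of $\Xbar$, and it is not the ``routine bookkeeping'' you hope it is: it is the entire analytic content of the theorem. You set $\Xbar_t=\sigma(\X_t)$ with $\sigma=\exp\circ s\circ\logstar$ for a \emph{linear} graded section $s$ of $\phi$, and claim that $\sigma$ is Lipschitz for the homogeneous distances so that $\norm{\Xbar_{st}}\lesssim\norm{\X_{st}}$. This is false in general. The increment $\Xbar_{st}=\sigma(\X_s)^{-1}\otimes\sigma(\X_t)$ is a function of $\X_{st}$ alone \emph{only if} $\sigma$ is a group homomorphism, i.e.\ only if $s$ is a morphism of Lie algebras, which a graded linear complement of $\ker\phi$ has no reason to be. Indeed, write $\ell_t=\logstar\X_t$ and apply BCH: the degree-$2$ part of $\log\Xbar_{st}$ is
\begin{equ}
s\big((\logstar\X_{st})_{(2)}\big) + \tfrac12\Big(s\big([(\ell_s)_{(1)},\delta\ell_{(1)}]_\star\big) - \big[(\ell_s)_{(1)},\delta\ell_{(1)}\big]_\otimes\Big)\;.
\end{equ}
The first term is $O(|t-s|^{2\gamma})$, but the second is the image in $\ker\phi$ of a commutator defect of $s$ and is generically of order $|t-s|^{\gamma}$ only (with $(\ell_s)_{(1)}$ merely bounded). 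For instance with the natural section $s(\tau)=\tau\in\B_N$, one has $s([\bullet_i,\bullet_j]_\star)\in\B_{(2)}$ while $[\bullet_i,\bullet_j]_\otimes\in W_2(\B_1)$: these live in complementary subspaces, so the defect does not vanish. Hence $\Xbar$ built this way is \emph{not} $\gamma$-H\"older in $G^{(N)}(\B_N)$, and your claim that the degree-$n$ part of $\log\Xbar_{st}$ is a function of the degree-$\le n$ parts of $\logstar\X_{st}$ alone is incorrect --- it depends on $\X_s$ and $\X_t$ separately through exactly these cross terms.

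This is not a defect that can be patched by choosing $s$ more cleverly. If a Lie algebra section of $\phi$ existed, $\Xbar$ would be a canonical, measurable function of $\X$, and the theorem would require no choice at all --- but the paper stresses that $\Xbar$ is non-unique and rests on the Lyons--Victoir extension theorem. What the paper actually does is iterate over the tree-degree $n$: at each step it extracts a genuine path $\xbar^\tau$, $\tau\in\trees_{(n+1)}$, from the \emph{additive} quantity $\innerprod{\X_{st},\tau}-\innerprod{\Xbar^{(n)}_{st},\psi_n(\tau)}$ (additivity in $(s,t)$ being precisely the consequence of $\psi$ being a coalgebra morphism together with Chen's relation), shows the intermediate object $\Xhat^{(n+1)}$ is $\gamma$-H\"older in the quotient $G^{(N)}(\B_{n+1})/K_{n+1}$, and then invokes Lyons--Victoir to fill in the missing bracket components with the correct regularity. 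That non-canonical lift at each level is exactly what your fixed $\sigma$ cannot supply, and it is where the main work of the proof lives.
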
 
 Before adding a few remarks, we will illustrate the result with the first non-trivial example.
 \begin{example}
 Consider the case where $X \in \reals^d$ with H\"older exponent $1/3 < \gamma \leq 1/2$, so that $N=2$. The important components of the branched rough path $\X$ above $X$ are $\inner{\X,\bullet_i}$ and $\inner{\X,\ntreeoneone_j^k}$, for all $i,j,k=1\dots d$. The theorem tells us that there exists a path
 \begin{equ}
 \xbar = (\xbar^{\bullet_i}, \xbar^{{\treeoneone_j^k}})_{i,j,k=1\dots d} 
 \end{equ} 
 where $\xbar^{\bullet_i} = X^i$ for all $i=1\dots d$ and moreover there exists a geometric rough path $\Xbar$ above $\xbar$. Since 
 \begin{equ}
 \B_2 \defin \spn\{\bullet_i, \ntreeoneone_j^k : i,j,k=1\dots d\}\;,
 \end{equ}
we can see that $\Xbar$ is defined on the (truncated) tenor product space $\B_2 \oplus \B_2^{\otimes 2}$. The map $\psi$ tells us how to write $\X$ in terms of $\Xbar$, for instance we have $\psi(\bullet_i) = \bullet_i$ and $\psi(\ntreeoneone_i^j) = \bullet_j\otimes \bullet_i + \ntreeoneone_i^j$ and therefore
 \begin{equ}
 \inner{\X_{st},\bullet_i} = \inner{\Xbar_{st},\bullet_i} \quad\text{and}\quad \inner{\X_{st},\ntreeoneone_i^j} = \inner{\Xbar_{st},\bullet_j\otimes \bullet_i + \ntreeoneone_i^j}\;.
 \end{equ}
Or in the more formal language
\begin{equ}\label{e:intro_eg_integrals}
\delta X^i_{st} = \delta \xbar^{\bullet_i}_{st} \quad\text{and}\quad \int_s^t \int_s^{v_1} dX^j_{v_2} dX^i_{v_1} =  \int_s^t \int_s^{v_1}  d\xbar^{\bullet_j}_{v_2}  d\xbar^{\bullet_i}_{v_1} + \delta \xbar^{\treeoneone^j_i}_{st}\;,
\end{equ}
for all $i,j,k=1\dots d$. Note that even though $X^i = \xbar^{\bullet_i}$, the integrals defined on the left hand and right hand side of the second equality in \eqref{e:intro_eg_integrals} are \emph{different}, since  the one on the left is defined by $\X$ and the one on the right is defined by $\Xbar$.    
\end{example}
 
This result relies on the Lyons-Victoir extension theorem of \cite{lyons07}, which shows that every $\gamma$-H\"older path in a quotient of the free nilpotent group $G^{(N)}(V)$ can be extended to a $\gamma$-H\"older path in $G^{(N)}(V)$. Since the extension theorem of \cite{lyons08} is non-unique, the path $\Xbar$ is also non-unique. Moreover, there is a great deal of redundancy in $\Xbar$, since it has many more components than $\X$, however, this is the most \emph{convenient} way to build a geometric rough path containing all the information of $\X$. The map $\psi$ describes how the components of $\X$ should be split up amongst the components of the tensor product algebra $T^{(N)}(\B_N)$. As we shall see, the fact that $\psi$ is a Hopf algebra morphism is crucial not only when obtaining $\Xbar$, but also when applying \eqref{e:intro_psi} further down the line. 

\begin{remark}
In \cite{lejay06}, the authors consider non-geometric rough path to be geometric rough paths without the assumption of satisfying the shuffle product relation. They show that these non-geometric rough paths are in fact isomorphic to a special class of geometric rough paths, known as $(p,q)$-rough paths, living above a path in an extended space. Hence, our result is an extension of this result, in the sense that the more general (and more useful) branched rough paths can also be encoded in a geometric rough path living above a path in an extended space. Note however that our result does not yield an isomorphism.     
\end{remark}

The main motivation behind Theorem \ref{thm:intro1} is that it allows us to rewrite an expression controlled by a branched rough path as an expression controlled by a geometric rough path. In particular, we can use this to show that every RDE driven by a branched rough path can be rewritten as another RDE driven by a geometric rough path. 
\begin{thm}[Generalised It\^o-Stratonovich correction]\label{thm:intro2}
Let $Y$ solve \eqref{e:intro_SDE}, driven by a branched rough path $\X$. Let $\xbar$ and $\Xbar$ be as defined in Theorem \ref{thm:intro1}. Then $Y$ is also a solution to
\begin{equ}\label{e:intro_conversion}
dY_t = \sum_{\tau \in \trees_N} f_\tau (Y_t)d\xbar^{\tau}_t \;,
\end{equ}
driven by the geometric rough path $\Xbar$, where the vector fields $f_{\tau}$ are defined by \eqref{e:recurrence} with $f_{\bullet_i} = f_i$ (and can be seen, for example in \eqref{e:intro_controlled}). 
\end{thm}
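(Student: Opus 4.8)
The plan is to show that $Y$, which by hypothesis solves the branched RDE \eqref{e:intro_SDE}, also satisfies the local Davie-type expansion that characterises the solution of the geometric RDE \eqref{e:intro_conversion}, and then to conclude by the sewing lemma (equivalently, by well-posedness of geometric RDEs). From the theory of branched rough paths (this is the content of \eqref{e:intro_controlled}, see \cite{gubinelli10a}), $Y$ is the unique $\gamma$-H\"older path satisfying, uniformly in $s,t\in[0,T]$,
\begin{equ}\label{e:plan_branched}
\delta Y_{st} \;=\; \sum_{\tau\in\trees_N} f_\tau(Y_s)\,\innerprod{\X_{st},\tau} \;+\; \liloh(|t-s|)\;,
\end{equ}
with $f_\tau$ the elementary differentials \eqref{e:recurrence}. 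On the other hand, the solution of \eqref{e:intro_conversion} driven by the geometric rough path $\Xbar$ is characterised, again uniformly, by
\begin{equ}\label{e:plan_geom}
\delta Y_{st} \;=\; \sum_{|w|\le N} F_w(Y_s)\,\innerprod{\Xbar_{st},w} \;+\; \liloh(|t-s|)\;,
\end{equ}
where $w$ runs over words in the letters $\trees_N$, $|w|$ is the length of $w$, and $F_w$ is the usual elementary-differential coefficient of the family of vector fields $(f_\tau)_{\tau\in\trees_N}$ (so that $F_{(\sigma)}=f_\sigma$ and $F_{(\sigma)\otimes w'}=f_\sigma^\alpha\,\del_\alpha F_{w'}$). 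Words of length $>N$ are discarded because $\Xbar$ is $\gamma$-H\"older and $(N+1)\gamma>1$.

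Substituting the identity $\innerprod{\X_{st},\tau}=\innerprod{\Xbar_{st},\psi(\tau)}$ of Theorem~\ref{thm:intro1} into \eqref{e:plan_branched} and writing $\psi(\tau)=\sum_w\psi^\tau_w\,w$ in the word basis, one obtains
\begin{equ}
\delta Y_{st} \;=\; \sum_{w}\Bigl(\sum_{\tau\in\trees_N}\psi^\tau_w\,f_\tau(Y_s)\Bigr)\,\innerprod{\Xbar_{st},w} \;+\; \liloh(|t-s|)\;.
\end{equ}
Since $\psi$ is \emph{graded}, $\psi(\tau)$ for $\tau\in\trees_N$ involves only words of length $\le|\tau|\le N$, so this sum already ranges over the same words as \eqref{e:plan_geom}. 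Comparing coefficients, the whole statement reduces to the purely algebraic identity
\begin{equ}\label{e:plan_key}
F_w \;=\; \sum_{\tau\in\trees_N}\psi^\tau_w\,f_\tau\qquad\text{as vector fields on }\reals^e\;,
\end{equ}
for every word $w$ in the letters $\trees_N$ with $|w|\le N$.

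I would prove \eqref{e:plan_key} by induction on the degree $|w|$ (the total number of vertices of the trees composing $w$). The base case forces $w=(\bullet_i)$ and $\tau=\bullet_i$, and both sides equal $f_i$ since $\psi(\bullet_i)=\bullet_i$. For the inductive step one uses that $\psi$ is not just an algebra morphism but a \emph{coalgebra} morphism, intertwining the Connes--Kreimer coproduct $\Delta$ with the deconcatenation coproduct $\Deltabar$ on $T(\B_N)$; combined with the grafting recursion \eqref{e:recurrence} for $f_\tau$ and the prepend-a-letter recursion for $F_w$, this pins down both sides of \eqref{e:plan_key}. Conceptually, the B-series operator $\mathbf g\mapsto\bigl(y\mapsto y+\sum_\tau\inner{\mathbf g,\tau}f_\tau(y)\bigr)$ on characters of $\hopf$ and the Chen-series operator $\mathbf h\mapsto\bigl(y\mapsto y+\sum_w\inner{\mathbf h,w}F_w(y)\bigr)$ on characters of $T(\B_N)$ are both determined by the same Hopf-algebraic data --- they are morphisms for the relevant (co)products and they send the canonical lift of a smooth path to the time-one flow of $\dot y=\sum_i f_i(y)\dot X^i$ --- so that precomposing the Chen-series operator with the graded Hopf algebra morphism $\psi$ yields the B-series operator, which is precisely \eqref{e:plan_key}.

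Granting \eqref{e:plan_key}, $Y$ satisfies \eqref{e:plan_geom}, so by the sewing lemma $Y$ is the solution of \eqref{e:intro_conversion} driven by $\Xbar$; equivalently, $Y$ together with the Gubinelli derivatives it inherits from $\X$ through $\psi$ is a controlled rough path over $\Xbar$ solving $\delta Y_{st}=\int_s^t\sum_\tau f_\tau(Y_r)\,d\xbar^\tau_r$. The \emph{main obstacle} is \eqref{e:plan_key}: reconciling the tree-grafting recursion for $f_\tau$ with the word-concatenation recursion for $F_w$ through $\psi$ requires careful bookkeeping of the tree symmetry factors and of the normalisations of the dual pairings on $\hopf$ and on $T(\B_N)$ --- the same bookkeeping already present in the construction of $\psi$ in Theorem~\ref{thm:intro1}. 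A secondary, routine point is to check that the $\liloh(|t-s|)$ remainders in \eqref{e:plan_branched}--\eqref{e:plan_geom} come with uniform H\"older-type moduli, which follows from $\gamma$-H\"older regularity of $Y$ and smoothness of the $f_i$ (hence of the $f_\tau$).
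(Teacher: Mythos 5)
Your proposal follows essentially the same route as the paper: start from the branched Davie expansion (Proposition \ref{prop:euler}), substitute $\innerprod{\X_{st},\tau}=\innerprod{\Xbar_{st},\psi(\tau)}$, and reduce the theorem to the algebraic identity $F_w=f_{\psi^*(w)}$ for words $w$ in the letters $\trees_N$, which is exactly the content of the proof of Theorem~\ref{thm:strato} combined with Proposition~\ref{prop:georecurrence}. You also correctly identify that the coalgebra-morphism property of $\psi$ is what lets you peel off one letter at a time, since it gives $\psi^*(\sigma\otimes w')=\psi^*(\sigma)\star\psi^*(w')=\sigma\star\psi^*(w')$ after dualising $\Deltabar\psi=(\psi\otimestilde\psi)\Delta$.

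The gap is that closing the induction in your \eqref{e:plan_key} requires the identity $f_{\sigma\star h}=f_\sigma\cdot Df_h$ for a single tree $\sigma\in\trees^*$ and arbitrary $h\in\hopf^*$ --- this is precisely the paper's Lemma~\ref{lem:L_GL} (with $q=1$), and it does not follow formally from the grafting recursion \eqref{e:recurrence} together with the Hopf-morphism property: it is a genuine combinatorial statement about how the Grossman--Larson convolution $\star$ interacts with the elementary differentials, and the paper proves it by a separate induction on tree size with careful tracking of symmetry factors and multinomial coefficients. Your ``conceptual'' paragraph comparing the B-series operator on characters of $\hopf$ to the Chen-series operator on characters of $T(\B_N)$ and asserting that precomposition with $\psi$ sends one to the other is a restatement of the thing to be proved rather than an argument, and the claim that both operators ``send the canonical lift of a smooth path to the time-one flow of $\dot y=\sum_i f_i(y)\dot X^i$'' is not even well posed for the Chen-series side, since that series is driven by $\xbar$, not $X$. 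Note also that the geometric characterisation \eqref{e:plan_geom} you invoke as ``the usual elementary-differential expansion'' is itself not immediate in this branched setting and is established in the paper as Proposition~\ref{prop:georecurrence}, whose proof again leans on Lemma~\ref{lem:L_GL}. So the skeleton of your argument is right and matches the paper, but the single lemma you dismiss as ``careful bookkeeping'' is the technical heart of the whole theorem and must actually be proved.
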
 

\begin{example}
Returning back to the $1/4 < \gamma \leq 1/3$ example, if $Y$ solves \eqref{e:intro_SDE} driven by some $\X$ then we also have
\begin{align*}
dY_t &= f_i(Y_t)d\xbar^{\bullet_i}_t + (f_i^\alpha \del^\alpha f_j)(Y_t) d\xbar^{\mytreeoneone{j}{i}}\;\; + (f_k^\alpha \del^\alpha f_j^\beta \del^\beta f_i)(Y_t)d\xbar^{\mytreeoneoneone{k}{j}{i}}\\
&+ \frac{1}{2}(f_{k}^\alpha f_j^\beta \del^{\alpha} \del^{\beta} f_i)(Y_t) d\xbar^{\mytreetwoone{k}{j}{i}}\;,
\end{align*}
driven by the geometric rough path $\Xbar$ found in Theorem \ref{thm:intro1}, where we sum over all $i,j,k=1 \dots d$ and $\alpha,\beta = 1 \dots e$, noting that $\xbar^{\mytreetwoone{k}{j}{i}} = \xbar^{\mytreetwoone{j}{k}{i}}$. Even though $\xbar^{\bullet_i} = X^i$, one must distinguish between $f_i(Y_t) dX^i_t$  and $f_i(Y_t) d\xbar_t^{\bullet_i}$, since the former is driven by $\X$ and the latter is driven by $\Xbar$.   
\end{example}
\begin{rmk}
Although we call this a generalised It\^o-Stratonovich correction, it is really more like a ``Any non-geometric integral''-``Particular class of geometric integral'' correction. However, we are quite justified in giving it this name. Suppose $X$ was a non semi-martingale path for which there exists a branched rough path $\X$ above it and also some kind of ``Stratonovich'' rough path $\Xbar^{(1)}$ above it, fractional Brownian motion with Hurst parameter $H> 1/4$ being a good example \cite{coutin02}. As will be clear in the proof of Theorem \ref{thm:intro1}, we can actually choose $\Xbar$ such that the components above $X$ are given by $\Xbar^{(1)}$ (or indeed any geometric rough path above $X$). Hence, the formula can tell us what correction we get if we take an RDE driven by $\X$ and rewrite it using ``Stratonovich'' integrals, just as in the usual It\^o-Stratonovich correction formula.    
\end{rmk}

The outline of the article is as follows. In Section \ref{sec:hopf} we define the algebraic concepts underlying branched rough paths, including the Connes-Kreimer Hopf algebra. We then provide a definition of branched rough paths, equivalent to that given in \cite{gubinelli10a}, that is more in line with the concept of a geometric rough path. In Section \ref{sec:controlled}, we define solutions to RDEs driven by branched rough paths, via the idea of controlled rough paths.  In Section \ref{sec:geometric}, we first recall the definition of a geometric rough path. We then show that geometric rough paths fit easily in to the framework of branched rough paths, before providing a proof of Theorem \ref{thm:intro1}. In Section \ref{sec:RDEnewbasis}, we discuss the special case of RDEs driven by geometric rough paths, before proving the generalised It\^o-Stratonovich correction formula.

\subsection*{Acknowledgements}

{\small
We would like to thank M. Gubinelli, R. Hudson and J. Jones for their contributions to several fruitful discussions. Financial support for MH was kindly provided by EPSRC grant EP/D071593/1, by the 
Royal Society through a Wolfson Research Merit Award, and by the Leverhulme Trust through a Philip Leverhulme Prize.
DK was supported by a Warwick Postgraduate Research Scholarship.
}

\section{Hopf algebras and branched rough paths}\label{sec:hopf}
\subsection{Hopf algebras for probabilists}\label{subsec:hopf_prob}
In this subsection we will give a non-specialist outline of what a Hopf algebra is and why it is a useful concept. For a more detailed introduction, we recommend the notes \cite{manchon04,brouder04} as well as the standard texts \cite{sweedler69,abe80}.
\par
A Hopf algebra is a special kind of bialgebra, so we will first define the latter. A bialgebra arises naturally when one algebra is in some sense \emph{acting} on another. To this end, let $H$ be a vector space and let $H^*$ be another vector space, acting linearly on $h$ via the pairing $\inner{\cdot,\cdot} : H^* \otimest H \to \reals$. Suppose moreover that $H$ is actually an algebra, with some product $\cdot : H \otimest H \to H$ and unit element $\unit$. In many natural situations, the space $H^*$ is also an algebra, with some other product $\star : H^*\otimest H^* \to H^*$ and a \emph{counit} $\counit$, which acts as the dual element of $\unit$. 
\par
It is often advantageous to superimpose the structure from $H^*$ onto $H$, so that we simply have a vector space $H^*$ acting on a more structured space $H$. To be precise, the product $\star$ can be encoded into $H$ by a map $\Delta : H \to H\otimest H$ called a \emph{coproduct}.  The coproduct is the \emph{dual} of $\star$ in the sense that
\begin{equ}\label{e:hopf_coproduct}
\inner{f\star g , h} = \inner{f\otimest g , \Delta h}\;, 
\end{equ}
for every $f,g \in H^*$ and $h\in H$. In other words, the action of $f\star g$ on $h$ is determined by the action of $f\otimest g$ on the coproduct of $h$. We will often use the notation
\begin{equ}
\Delta h = \sum_{(h)} h^{(1)}\otimest h^{(2)}\;,
\end{equ}
and in the sequel we will occasionally omit the summation notation. In this notation \eqref{e:hopf_coproduct} can be written
\begin{equ}
\inner{f\star g ,h } = \sum_{(h)} \inner{f,h^{(1)}}\inner{g,h^{(2)}}\;.
\end{equ}
The triple $(H,\cdot,\Delta)$ is then called a \emph{bialgebra}, provided certain consistency relations between the product and coproduct are satisfied.   
\begin{rmk}
Recall that, although both $\otimes$ and $\otimestilde$ are tensor products, we reserve the former for the product in the tensor product algebra $T(V)$ and the latter simply to discriminate between the left and the right part of a coproduct. If $x,y$ are two elements in some algebra and $f,g$ are two maps on that algebra, then we use the convention $(f\otimestilde g) (x\otimestilde y) = f(x)\otimestilde g(y)$.   
\end{rmk}
Suppose that some $f\in H^*$ has an \emph{inverse} $f^{-1} \in H^*$, satisfying $f \star f^{-1} = f^{-1} \star f = \counit$, of course there is always at least one element in $H^*$ with an inverse. Since we want all the structure of $H^*$ to be contained in $H$, we must encode an inverse map into $H$. In fact, we introduce a map $\S : H \to H$ such that $\S^* : H^* \to H^*$ is the inverse map, satisfying $\S^* f \star f = f\star \S^* f = \counit$. The map $\S$ is called the \emph{antipode}. But since we only want to work on $H$ and not $H^*$, the dual requirement for $\S$ is that 
\begin{equ}
(\Id \otimest \S) \Delta h = (\S \otimest \Id) \Delta h = \inner{\counit,h}\unit\;,
\end{equ}  
for all $h \in H$, where $\Id : H \to H$ is the identity map. The quadruple $(H,\cdot,\Delta,\S)$ is called a \emph{Hopf algebra}. Thus, a Hopf algebra is nothing more than a bialgebra with an antipode.  
\par
A bialgebra is called \emph{graded} if it can be decomposed into a direct sum of vector spaces
\begin{equ}
H = \bigoplus_{n \in \naturals} H_{(n)}\;,
\end{equ}
satisfying the natural multiplication and comultiplication rules
\begin{equ}
H_{(n)} \cdot H_{(m)} \subset H_{(n+m)} \quad \text{and} \quad \Delta H_{(n)} \subset \bigoplus_{p+q=n} H_{(p)} \otimest H_{(q)}\;,
\end{equ}
for any $n\in\naturals$. A graded Hopf algebra must satisfy the additional property
\begin{equ}
\S H_{(n)} \subset H_{(n)}\;,
\end{equ}
for any $n\in\naturals$. For any graded bialgebra, one can define a map $|\cdot|$ whose domain is given
by some ``natural'' basis elements of $H$, and which simply reads off the index $n$ of the space $H_{(n)}$ in which the basis element lives.  
\par
A standard result in Hopf algebra theory states that every graded bialgebra $H$ satisfying $H_0 = \reals$ is in fact a Hopf algebra. That is, one can find an antipode for $H$. Moreover, every Hopf algebra has a \emph{unique} antipode. See \cite{abe80,dascalescu01} for details. To round off this subsection, we will give a simple example of a Hopf algebra. A more detailed exposition of this example can be found in \cite{brouder04}. 
\begin{example}[The algebra of differential operators] Consider the differential operator $\del_i = \del/\del x_i$ for $i=1\dots d$. The set $\{\del_i \}_{i=1}^d$ generates an algebra $H$, where multiplication is given by composition of the operators and the unit $\unit$ is given by the identity operator. To turn $H$ into a Hopf algebra, we must find a coproduct and an antipode. As stated above, coproducts arise naturally when an algebra $H^*$ is acting linearly on $H$. To this end, let $H^*$ be the space of smooth function $f : \reals^d \to \reals$ and define the pairing 
\begin{equ}
\inner{f,D} = (D f)(0)\;,
\end{equ}  
for any $D \in H$. The space of smooth functions $H^*$ can be turned into an algebra by introducing pointwise multiplication $\star$, and the counit $\counit$ is simply the constant function $f=1$. The coproduct $\Delta$ arises when we consider the action of the product $f\star g$ on a differential operator $D \in H$. For instance, Leibniz rule tells us that 
\begin{align*}
\inner{f\star g , \del_{i} \del_{j} } = (\del_{i} \del_{j} (f g) )(0) &=  \del_{i} \del_{j} f(0)g(0) +\del_{i}f(0) \del_{j}g(0)  + \del_{j}f(0) \del_{i}g(0) + f(0)\del_{i} \del_{j} g(0) \\
&= \inner{f \otimest g , \del_{i} \del_{j} \otimest 1 + \del_{i} \otimest \del_{j} + \del_{j} \otimest \del_{i} + 1 \otimest \del_{i} \del_{j} }\;.
\end{align*}
Hence, we can encode the action of $f\star g$ on $\del_i \del_j$ using the coproduct
\begin{equ}
\Delta (\del_i \del_j) = \del_{i} \del_{j} \otimest 1 + \del_{i} \otimest \del_{j} + \del_{j} \otimest \del_{i} + 1 \otimest \del_{i} \del_{j}\;.
\end{equ}
Of course, one can use this same technique to decide how to define $\Delta (\del_{i_1} \dots \del_{i_n})$. Moreover, it is an easy exercise to check that $\S 1 = 1$,  $\S \del_i = - \del_i$ and more generally $\S (\del_{i_1} \dots \del_{i_n}) = (-1)^n \del_{i_1} \dots \del_{i_n}$ defines an antipode on $H$.  
\end{example}

\subsection{The Connes-Kreimer Hopf algebra}
In this subsection we will define another important example of a Hopf algebra, called the \emph{Connes-Kreimer Hopf algebra}, which is a critical object in the theory of branched rough paths.
\par 
Let $\trees$ be the set of all rooted trees with finitely many vertices, whose vertices are decorated by labels from the alphabet $\{1,\dots,d \}$. Every element in $\trees$ can be constructed recursively by attaching a collection of trees (of lower order) to a new root. For example, the 
set of (undecorated) trees with three vertices or less is given by 
\begin{equ}
\trees_3 = \{\mytreeone{}, \mytreeoneone{}{} , \mytreeoneoneone{}{}{}, \mytreetwoone{}{}{}\;\}\;.
\end{equ}
We can then construct all single vertex trees by attaching the empty tree $1$ to a new root. We denote this by
\[
[1]_a = \mytreeone{a}\notag\;\;\;,
\]
for any $a$ from the alphabet. All trees of two vertices can be constructed by attaching these trees to a new root
\[
[\mytreeone{a}\;\;]_b = \mytreeoneone{a}{b}\;\;\;.
\]
For the trees of three vertices, we similarly have
\[
[\;\;\mytreeoneone{a}{b}\;\;\; ]_c\;\;= \;\; \mytreeoneoneone{a}{b}{c}\;\;\;.
\]
The remaining tree in $\trees_3$ is obtained by attaching a pair of single vertex trees to a root
\[
[\;\mytreeone{a}\;\;\;\mytreeone{b}\;\;\;]_c = \mytreetwoone{a}{b}{c}\;\;\;.
\]
Indeed, every element in $\trees$ can be written recursively as 
\begin{equ}\label{e:tree_prod}
[\tau_1 \tau_2\dots\tau_m]_a\;,
\end{equ}
for some smaller trees $\tau_1,\dots,\tau_m \in \trees \cup \{ 1\}$ and some $a$ from the alphabet. We will always assume that the order of the branches in each tree does not matter, in the sense that $[\tau_1 \dots \tau_n]_i = [\tau_{\sigma(1)} \dots \tau_{\sigma(n)}]_i$ for all permutations $\sigma$ of $\{1,\dots,n\}$. For each $[\tau_1 \dots \tau_n]_i$, only one such representation appears in the set $\trees$. 
\begin{remark}
In the rough path setting, rearranging branches in a tree corresponds to rearranging real-valued factors in an integrand. Hence, this is quite a natural assumption to make. 
\end{remark}
The \emph{Connes-Kreimer Hopf algebra} $(\hopf,\cdot,\Delta,\S)$ is the commutative polynomial algebra generated by the variables $\trees$, equipped with a coproduct $\Delta: \hopf \to \hopf \otimestilde \hopf$ and an antipode $\S:\hopf \to \hopf$. Alternatively, we can view the set $\hopf$ as a real vector space whose basis is the commutative monoid $\forest\cup \{1\}$ where $\forest$ is given by
\begin{equ}
\forest = \{ \tau_1 \dots \tau_n : \tau_i \in \trees, \;n\in \naturals^+ \}\;.
\end{equ}
Each monomial $\tau_1 \dots \tau_n$ can be thought of as an \emph{unordered forest}, since the polynomial product is commutative. Hence, a typical element of $\hopf$ is for example
\begin{equ}
\mytreeoneoneone{1}{2}{3}\;\;\; + 6\;\mytreeone{3}\;\;\;\mytreeoneone{1}{2}\;\; - \sqrt{2}\;\mytreeone{3}\;\;\;\mytreetwoone{3}{2}{1}\;.
\end{equ}

\begin{rmk}
We could equally construct the Connes-Kreimer Hopf algebra $\hopf(\alphabet)$, using any countable alphabet $\alphabet$ in place of $\{1,\dots,d\}$. However, since $\{1,\dots,d\}$ is the most commonly used choice, we reserve the notation $\hopf$ for this particular alphabet. 
\end{rmk}

The coproduct $\Delta$ is defined recursively. We first set $\Delta1 =  1\otimest 1 $, then for any $[\tau_1 \dots \tau_m]_a \in \trees$ we set
\begin{equ}\label{e:recursive_coproduct}
\Delta [\tau_1 \dots \tau_m]_a =  [\tau_1 \dots \tau_m]_a\otimestilde 1 + \sum_{(\tau_1) \dots (\tau_m)}(\tau_1^{(1)}  \dots \tau_m^{(1)})\otimestilde [\tau_1^{(2)}  \dots \tau_m^{(2)}]_a\;,
\end{equ}
where we use the Sweedler notation $\Delta x = \sum_{(x)} x^{(1)}\otimestilde x^{(2)}$. In the sequel, we will often omit the summation sign and simply write $\Delta x = x^{(1)}\otimestilde x^{(2)}$. In Remark \ref{rmk:cuts}, we will see that the coproduct $\Delta$ has a nice combinatorial interpretation when restricted to trees. We then extend $\Delta$ to all polynomials by requiring that it be linear and also a morphism with respect  to polynomial multiplication, that is
\begin{equ}
\Delta (\tau_1 \dots \tau_n) = \Delta \tau_1 \dots \Delta \tau_n\;,
\end{equ} 
for every $\tau_i\in\trees$. It is often useful to consider the \emph{reduced coproduct} $\Delta'$ defined by $\Delta ' x = \Delta x - 1\otimestilde x - x\otimestilde 1$. In any coalgebra, the coproduct is required to be \emph{coassociative}, which means that
\begin{equ}
(\Delta \otimestilde \Id) \Delta = (\Id \otimestilde \Delta) \Delta\;.
\end{equ}
One can check that this is true for both the coproduct and the reduced coproduct described above.
\par
In any Hopf algebra, the \emph{antipode} $\S : \hopf \to \hopf$ is a morphism of bialgebras satisfying 
\begin{equ}
M(\Id \otimestilde \S) \Delta x = M(\S \otimestilde \Id)\Delta x = x  \;,
\end{equ}
for any $x\in\hopf$, where $M$ is the multiplication map $M(x\otimestilde y) = xy$. The existence of an antipode for $\hopf$ follows from the fact that $\hopf$ is actually a graded bialgebra, we will define this grading below. For the Connes-Kreimer Hopf algebra the antipode has been explicitly constructed in \cite{connes98}. 
\par
The Hopf algebra $(\hopf,\cdot,\Delta,\S)$ gives rise to a dual Hopf algebra $(\hopf^*,\star,\delta,\S^*)$. Since $\hopf$ is a countable vector space, the elements in the topological dual $\hopf^*$ can be identified with formal series of elements in $\hopf$. In particular, we identify elements in the basis $\forest$ with elements in $\hopf^*$ by the natural pairing $\innerprod{h_1,h_2} = \delta_{h_1,h_2}$ for $h_1, h_2 \in \forest$. The co-unit $\counit\in \hopf^*$ is the map satisfying $\inner{\counit,1}=1$ and $\inner{\counit,\tau_1 \dots \tau_n} = 0$ for all $\tau_1 \dots \tau_n \in \forest$.  
\begin{rmk}
In the sequel, our notation will not distinguish between the unit and the co-unit, nor the basis $\forest$ and its dual elements $\forest^*$ (and likewise $\trees$ and $\trees^*$). However, it will always be clear from the context which we are referring to.
\end{rmk}
The product $\star: \hopf^* \otimestilde \hopf^* \to \hopf^*$, often referred to as \emph{convolution}, is the dual of $\Delta$, that is    
\begin{equ}
\innerprod{f\star g,  h} \defin \innerprod{f \otimestilde g , \Delta h} = \sum_{(h)} \innerprod{f,h^{(1)}}\innerprod{g,h^{(2)}}  \;,
\end{equ}
for any $f,g \in \hopf^*$ and $h\in\hopf$. It follows from the properties of the coproduct $\Delta$ (namely, coassociativity) that $\star$ provides $\hopf^*$ with an associative algebra structure. Let $\trees^*$ denote those elements in $\hopf^*$ that correspond to dual elements of $\trees$. Then for $\tau_1,\tau_2 \in \trees^*$, the product $\tau_1\star\tau_2$ can be interpreted as attaching $\tau_1$ to $\tau_2$. In particular, we have that
\begin{equ}\label{e:GL}
\tau_1 \star \tau_2 = \tau_1 \tau_2 + \tau_1 \star_t \tau_2\;,
\end{equ}
where $\tau_1 \star_t \tau_2$ is the sum of all trees in $\trees^*$ obtained by growing $\tau_1$ from a vertex of $\tau_2$. For example,
\begin{equ}
\mytreeone{a}\;\; \star_t \mytreeoneone{b}{c}\;\; = \mytreeoneoneone{a}{b}{c}\;\; + \mytreetwoone{a}{b}{c}\;.
\end{equ}
This is often referred to as the \emph{Grossman-Larson product}, and was first discussed in \cite{larson89}. The antipode $\S$ plays the role of an inverse with respect to $\star$ in the space $\hopf^*$, precisely as stated in Subsection \ref{subsec:hopf_prob}. The dual coproduct $\delta : \hopf^* \to \hopf^* \otimestilde \hopf^*$ is likewise the dual of polynomial multiplication
\begin{equ}
\innerprod{\delta \tau , h_1\otimestilde h_2} = \innerprod{\tau, h_1 h_2}\;.
\end{equ}
Just as above, this endows $\hopf^*$ with a coassociative coalgebra structure and it is a nice exercise to check that $\delta$ is a morphism with respect to $\star$, as every coproduct should be.  
\par
The trees $\trees$ give rise to a natural \emph{grading} on $\hopf$. For each $\tau \in \trees$, we define $|\tau|$ to be the number of vertices in $\tau$. We extend $|\cdot|$ to all of $\forest$ by 
\begin{equ}
|\tau_1 \dots \tau_n| = |\tau_1| + \dots +  |\tau_n|\;,
\end{equ}
for any $\tau_i \in \trees$. If we let $\forest_{(k)}$ denote the set of $\tau_1 \dots \tau_m \in\forest$ with $|\tau_1 \dots \tau_m|=k$ and $\hopf_{(k)}$ denote the real vector space spanned by $\forest_{(k)}$, with $\hopf_{(0)} = \reals$, then we clearly have
\begin{equ}
\hopf = \bigoplus_{k=0}^\infty \hopf_{(k)}\;.
\end{equ}
One can easily check that this satisfies the right consistency conditions to ensure $\hopf$ is a graded Hopf algebra. We will also make use of the truncated algebra 
\begin{equ}
\hopf_n = \bigoplus_{k=0}^n \hopf_{(k)}
\end{equ}
and its basis elements $\forest_n$, containing all $m\in\forest$ with $|m|\leq n$. Keeping in line with this notation, we also define $\trees_{(n)}$ as the set of $\tau \in \trees$ with $|\tau|=n$ and $\trees_n$ as the set of $\tau\in\trees$ with $|\tau| \leq n$. Likewise, we denote by $\B$,  $\B_n$ and $\B_{(n)}$ the real vector spaces spanned by $\trees$, $\trees_n$ and $\trees_{(n)}$, respectively. 

\begin{rmk}
It is natural to ask why one needs to consider polynomials of $\trees$ rather than just the set of trees. Indeed, for non-geometric rough paths, the trees are the important ingredients when solving an RDE. The reason we require polynomials is that we would like to define a rough path as a functional on some algebra, and this algebra must be big enough to include an element $h_1 h_2$ such that 
\begin{equ}
\innerprod{\X,h_1}\innerprod{\X,h_2} = \innerprod{\X,h_1 h_2}\;.
\end{equ}
This, in particular, allows us to write Chen's property as a fundamental operation on the algebra $\hopf$, described by the coproduct $\Delta$, rather than just an identity on the tree indexed components of $\X$.
\end{rmk}

\begin{remark}\label{rmk:tensor_identify}
Let $\B$ be the vector space spanned by the set of trees $\trees$. This is clearly a subspace of $\hopf$. The tensor product algebra $T(\reals^d)$ can easily be identified with the subspace of $\B$, and hence $\hopf$, spanned by the linear trees. This is achieved by identifying
\begin{equ}
e_{a}\otimes e_b \cong \mytreeoneone{a}{b}\;\; \quad \text{,} \quad e_a \otimes e_b \otimes e_c \cong \mytreeoneoneone{a}{b}{c}\;\;\;,  
\end{equ} 
and so forth, for any $a,b,c = 1\dots d$. In the sequel, we will refer to this identification via the inclusion map $\iota: T(\reals^d) \to \hopf$. In light of this, we should think of the Hopf algebra $\hopf$ as being an extension of the tensor product algebra over the same index set. As discussed in the introduction, the extra \emph{branched} objects are required to encode a non-trivial product that cannot be described by objects in the tensor product algebra alone.    
\end{remark}

\begin{rmk}\label{rem:canonical}
The definition \eqref{e:recursive_coproduct} is indeed quite a natural one. If $X$ were a smooth path in $\reals^d$ then we could build the branched rough path $\X$ canonically, by setting 
\begin{equ}
\innerprod{\X_{st} ,\mytreeone{i}\;\;} = \delta X^i_{st}\quad \text{and}\quad \innerprod{\X_{st},[\tau_1 \dots \tau_n]_i} = \int_s^t \innerprod{\X_{sr},\tau_1}\dots\innerprod{\X_{sr},\tau_n}dX^i_{r}\;.
\end{equ}
Using the properties of a path integral, namely, linearity with respect to the integrand and the adjacent interval property $\int_s^t = \int_s^u + \int_u^t$, one can recursively show that
\begin{align*}
\innerprod{\X_{st},[\tau_1 \dots \tau_n]_i} &= \innerprod{\X_{su},[\tau_1 \dots \tau_n]_i }\\ &+  \sum_{(\tau_1) \dots (\tau_n)}\big(\innerprod{\X_{su},\tau_1^{(1)}} \dots \innerprod{\X_{su}, \tau_n^{(1)}}\big) \int_u^t \innerprod{\X_{ur},\tau_1^{(2)}} \dots \innerprod{\X_{ur}, \tau_n^{(2)}}dX_{r}^i
\end{align*}
or in other words,
\begin{equ}\label{e:rmk_chen}
\innerprod{\X_{st},[\tau_1 \dots \tau_n]_i} = \innerprod{\X_{su}\otimestilde \X_{ut},\Delta [\tau_1 \dots \tau_n]_i}\;,
\end{equ}
with $\Delta$ satisfying \eqref{e:recursive_coproduct}. Hence, \eqref{e:rmk_chen} is an extension of Chen's property to more complicated looking integrals. 
\end{rmk}
\begin{rmk}\label{rmk:cuts}
When restricted to linear trees (or tensor products), the coproduct $\Delta$ is known as \emph{deconcatenation}, since it decomposes tensors into subtensors that can be concatenated into the original expression. There is a similar interpretation for $\Delta$ on all of $\hopf$, which is described by \emph{cuts} of a tree. We will say that the pair $(\tau_1 \dots \tau_m)\otimestilde \tau_0 $ is an admissible cut of $\tau \in \trees$, if one can obtain $\tau$ by \emph{attaching} the trees $\tau_1, \dots, \tau_m$ to the nodes of $\tau_0$. We then have the interpretation
\begin{equ}
\Delta \tau = \sum_{(\tau)} \tau^{(1)}\otimestilde \tau^{(2)}\;,
\end{equ}
where we sum over all admissible cuts $\tau^{(1)}\otimestilde \tau^{(2)}$, with $\tau^{(1)}$ and $\tau^{(2)}$ playing the roles of $(\tau_1 \dots \tau_m)$ and $\tau_0$ respectively. For example, we have that
\begin{equ}
\Delta \mytreetwoone{a}{b}{c}\;\;\; = 1\otimestilde \mytreetwoone{a}{b}{c}\;\;\; +    (\mytreeone{a}\;\;\;\mytreeone{b}\;\;\;)\otimestilde \mytreeone{c}\;\;\;+ \mytreeone{a}\;\;\;\otimestilde \mytreeoneone{b}{c}\;\;\;+\mytreeone{b}\;\;\;\otimestilde \mytreeoneone{a}{c}\;\;\; + \mytreetwoone{a}{b}{c}\otimestilde 1\;.
\end{equ}
In particular, we always have that $\Delta \tau = 1 \otimest \tau + \tau\otimest 1 + \tau^1 \otimest \tau^2$, where $\tau^1 \otimest \tau^2$ is shorthand for the sum over all non-trivial admissible cuts of $\tau$. In the sequel, we will frequently omit the sum in the fashion. Each term $\tau^1 \otimest \tau^2 \in \forest\otimest \trees$ and we have that $|\tau^1| + |\tau^2| = |\tau|$, recalling that $|\cdot|$ simply counts the number of vertices in a forest or tree. This observation will be crucial in the sequel.
\end{rmk}

\subsection{Group-like and primitive elements}\label{subsec:grouplike}
We will denote by $\hom(\hopf,\reals)$ those elements in $\hopf^*$ that are also homomorphisms with respect to polynomial multiplication $\cdot$, that is, $f \in \hom(\hopf,\reals)$ if and only if 
\begin{equ}\label{e:homo}
\innerprod{f,h_1 h_2 } = \innerprod{f,h_1}\innerprod{f,h_2}\;.
\end{equ}
These are also known as the \emph{characters} of $\hopf$. It is easy to check that $\hom(\hopf,\reals)$ can be identified with the \emph{group-like elements}, defined by 
\begin{equ}
G(\hopf) = \{ g \in \hopf^* : \delta g = g \otimestilde g \}\;.
\end{equ} 
In particular, the equality \eqref{e:homo} holds if and only if 
\begin{equ}
\innerprod{\delta g, h_1\otimestilde h_2 } = \innerprod{g,h_1 h_2 } = \innerprod{g,h_1}\innerprod{g,h_2} = \innerprod{g\otimestilde g , h_1\otimestilde h_2}\;,
\end{equ}
for all $h_1,h_2\in \hopf$. The reason $G(\hopf)$ is called the set of group-like elements is because it is indeed a group. For the Connes-Kreimer Hopf algebra, this is often referred to as the \emph{Butcher group} \cite{hairer74}. 
\begin{prop}\label{prop:group}
The pair $(G(\hopf),\star)$ is a group with inverses given by $g^{-1}\defin \S^*g$, where $\S^*$ is the adjoint of the antipode.  
\end{prop}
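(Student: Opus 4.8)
The plan is to verify the group axioms for $(G(\hopf),\star)$ directly, using the Hopf algebra structure of $\hopf$ and the fact that $G(\hopf)$ coincides with the characters $\hom(\hopf,\reals)$, as already noted above. First I would check that $\star$ is a well-defined binary operation on $G(\hopf)$: given $g_1,g_2\in G(\hopf)$, I need $g_1\star g_2$ to be group-like, i.e. $\delta(g_1\star g_2) = (g_1\star g_2)\otimestilde(g_1\star g_2)$. The cleanest route is to use the character description: for $h_1,h_2\in\hopf$, compute $\innerprod{g_1\star g_2, h_1 h_2} = \innerprod{g_1\otimestilde g_2, \Delta(h_1 h_2)}$, then use that $\Delta$ is an algebra morphism, $\Delta(h_1 h_2) = \Delta h_1\,\Delta h_2$, together with the fact that $g_1$ and $g_2$ are themselves characters, to factor this as $\innerprod{g_1\star g_2,h_1}\innerprod{g_1\star g_2,h_2}$. (One must be slightly careful about the commutation of the middle tensor factors when multiplying $\Delta h_1$ and $\Delta h_2$ inside $\hopf\otimestilde\hopf$, but since $\hopf$ is commutative this is harmless.)

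Next I would check associativity of $\star$ on $G(\hopf)$: this is immediate from the coassociativity of $\Delta$, $(\Delta\otimestilde\Id)\Delta = (\Id\otimestilde\Delta)\Delta$, which was stated in the excerpt, since $\star$ on all of $\hopf^*$ is already associative — so nothing special to $G(\hopf)$ is needed here. Then the identity element: the counit $\counit$ is a character (indeed $\innerprod{\counit,h_1 h_2} = \innerprod{\counit,h_1}\innerprod{\counit,h_2}$ since $\counit$ vanishes off $\hopf_{(0)}=\reals$ and $|h_1 h_2| = |h_1|+|h_2|$), hence $\counit\in G(\hopf)$, and $\counit\star g = g\star\counit = g$ follows from the counit axiom $(\counit\otimestilde\Id)\Delta = (\Id\otimestilde\counit)\Delta = \Id$.

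The one step requiring genuine argument is that $g^{-1}\defin\S^* g$ is again in $G(\hopf)$ and is a two-sided inverse. The two-sided inverse property, $\S^*g\star g = g\star\S^*g = \counit$, is exactly the defining property of the antipode dualized: $\innerprod{\S^*g\star g,h} = \innerprod{g\otimestilde g,(\S\otimestilde\Id)\Delta h} = \innerprod{g, M(\S\otimestilde\Id)\Delta h}\cdot(\text{using }g\text{ is a character to split and recombine})$; more directly, $\innerprod{\S^* g\star g, h} = \innerprod{\S^*g\otimestilde g,\Delta h} = \innerprod{g\otimestilde g,(\S\otimestilde\Id)\Delta h}$, and since $g$ is a character this equals $\innerprod{g, M(\S\otimestilde\Id)\Delta h} = \innerprod{g,\innerprod{\counit,h}\unit} = \innerprod{\counit,h}\innerprod{g,\unit} = \innerprod{\counit,h}$, the last equality because characters send $\unit$ to $1$. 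The analogous computation with $(\Id\otimestilde\S)$ handles $g\star\S^*g$. Finally, that $\S^*g$ is group-like: the standard fact that the antipode is an algebra anti-morphism — hence, $\hopf$ being commutative, an algebra morphism — gives $\innerprod{\S^*g,h_1 h_2} = \innerprod{g,\S(h_1 h_2)} = \innerprod{g,\S h_1\,\S h_2} = \innerprod{g,\S h_1}\innerprod{g,\S h_2} = \innerprod{\S^*g,h_1}\innerprod{\S^*g,h_2}$, so $\S^*g$ is a character, hence in $G(\hopf)$.

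The main obstacle, such as it is, is purely bookkeeping: keeping the Sweedler notation straight and correctly invoking commutativity of $\hopf$ at the two places it is needed (closure of $\star$, and the anti-morphism-to-morphism upgrade for $\S$). No deep idea is required; every ingredient — coassociativity, counit axiom, antipode axiom, $\Delta$ a morphism, $\S$ an anti-morphism — is either stated in the excerpt or standard, and the proof is a sequence of short dualization computations. I would present it as: (i) $G(\hopf)=\hom(\hopf,\reals)$ [already observed]; (ii) closure; (iii) associativity and unit [from the coalgebra axioms]; (iv) inverses [from the antipode axiom plus $\S$ being a morphism on the commutative $\hopf$].
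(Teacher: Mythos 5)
Your proof is correct and complete. The paper's own ``proof'' consists of the single line ``Standard result for Hopf algebras and an easy exercise,'' so you have simply filled in the standard argument the authors declined to write out. The four steps you carry out --- closure via $\Delta$ being an algebra morphism, associativity via coassociativity, identity via the counit axiom, and inverses via the antipode axiom together with $\S$ being an (anti-)morphism, hence a morphism on the commutative algebra $\hopf$ --- are exactly the textbook verification being alluded to; each dualization computation is done correctly. One minor remark: in the closure step you flag a possible subtlety about ``commutation of the middle tensor factors,'' but in fact the bialgebra axiom already asserts that $\Delta$ is an algebra morphism into $\hopf\otimestilde\hopf$ equipped with the standard tensor-product multiplication $(a\otimestilde b)(c\otimestilde d) = ac\otimestilde bd$, so no commutativity of $\hopf$ is actually needed there; commutativity is genuinely needed only in your final step, to upgrade the anti-morphism $\S$ to a morphism so that $\S^*g$ is again a character.
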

\begin{proof}
Standard result for Hopf algebras and an easy exercise.
\end{proof}
The group property of $\hom(\hopf,\reals)$ is one of the main motivations behind Hopf algebras and in particular explains the role of the antipode. Indeed, the concept of a Hopf algebra is often introduced as the linearisation of a group. 
\par
If we were to replace $\hopf$ with the tensor product space $T(V)$ over the vector space $V=\reals^d$,  then we could equivalently characterise each group-like elements as the exponential of a Lie polynomial \cite[Theorem 1.4]{reutenauer93}. Remarkably, the same construction works in this setting too. We define the bracket $[\cdot,\cdot]_\star : \hopf^*\times \hopf^* \to \hopf^*$ by  
\begin{equ}\label{e:liebracket}
[h_1,h_2]_\star = h_1\star h_2 - h_2 \star h_1\;,
\end{equ}
which one can easily check is a Lie bracket. We define the set of $\delta$-primitives as
\begin{equ}\label{e:liealgebra}
P(\hopf) = \{ h \in \hopf^* : \delta h = 1\otimestilde h + h \otimestilde 1 \}\;,
\end{equ}
where $1$ is the co-unit in $\hopf^*$. In the context of Lie algebras, this condition is often stated as \\$\innerprod{h,xy} = \innerprod{1,x}\innerprod{h,y} + \innerprod{h,x}\innerprod{1,y}$ and the elements are known as \emph{derivations}. As suggested by the notation, $P(\hopf)$ is a Lie algebra with respect to $[\cdot,\cdot]_\star$ and has a very natural basis in $\hopf^*$. 
\begin{prop}\label{prop:algebra} The set $P(\hopf)$ is a Lie algebra with bracket $[\cdot,\cdot]_\star$ and moreover
\begin{equ}
P(\hopf) = \B\;,
\end{equ}
where $\B$ is the real vector space spanned by the dual trees $\trees^*$. 
\end{prop}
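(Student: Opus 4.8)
The plan is to prove the two assertions separately: first that $P(\hopf)$ is a Lie subalgebra of $(\hopf^*,[\cdot,\cdot]_\star)$, and then the identification $P(\hopf)=\B$.

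For the subalgebra claim, I would verify directly that $\delta$ is a morphism for $\star$ — which is stated in the excerpt as ``a nice exercise'' and may be assumed — and then check that if $h_1,h_2$ both satisfy $\delta h_i = 1\otimestilde h_i + h_i \otimestilde 1$, so does $[h_1,h_2]_\star$. Expanding $\delta(h_1\star h_2)=\delta h_1 \star \delta h_2$ using bilinearity of $\star$ on $\hopf^*\otimestilde\hopf^*$ (with the convention $(f\otimestilde g)\star(f'\otimestilde g')=(f\star f')\otimestilde(g\star g')$), the four cross terms $1\otimestilde h_1,\ h_1\otimestilde 1,\ 1\otimestilde h_2,\ h_2\otimestilde 1$ combine, and after antisymmetrising in $h_1,h_2$ the terms $h_1\star h_2\otimestilde 1$-type pieces survive as $[h_1,h_2]_\star\otimestilde 1 + 1\otimestilde[h_1,h_2]_\star$ while the mixed terms $h_1\otimestilde h_2 + h_2\otimestilde h_1$ cancel against their transposes. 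That $[\cdot,\cdot]_\star$ is a Lie bracket (antisymmetry, Jacobi) follows formally from associativity of $\star$, exactly as in any associative algebra.

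For the identification $P(\hopf)=\B$, recall that $\delta$ on $\hopf^*$ is dual to the commutative polynomial product on $\hopf$, so $h\in P(\hopf)$ iff $\innerprod{h,\tau_1\cdots\tau_n}$ vanishes whenever $n\geq 2$; i.e. iff the formal-series expansion of $h$ is supported on single trees $\trees^*$ (and has no constant term, since $\innerprod{h,1}=\innerprod{h,1\cdot 1}=\innerprod{1,1}\innerprod{h,1}+\ldots$ forces $\innerprod{h,1}=0$). This is essentially immediate: the inclusion $\B\subseteq P(\hopf)$ holds because for a single tree $\tau$, $\innerprod{\tau,h_1h_2}=\innerprod{\delta\tau,h_1\otimestilde h_2}$ picks out exactly the terms in $h_1h_2$ equal to $\tau$, and a product of basis forests equals the single tree $\tau$ only when one factor is $\tau$ and the other is $1$; conversely if $h=\sum c_m m^*$ over forests $m$ and $h$ is $\delta$-primitive, pairing against $\tau_1\tau_2$ for trees $\tau_i$ shows every coefficient on a genuine (length $\geq 2$) forest is zero, and pairing against $1$ kills the constant. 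I would also remark that $\B=P(\hopf)$ being the Lie algebra of the group $G(\hopf)$ is consistent with $G(\hopf)=\expstar P(\hopf)$, the analogue of Reutenauer's result cited just above, though that is not needed for the statement.

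I do not expect a serious obstacle here; the only point requiring minor care is bookkeeping the comultiplication-is-a-morphism identity $\delta(f\star g)=\delta f \star \delta g$ with the correct tensor-swap sign/flip conventions, and being explicit that ``$P(\hopf)=\B$'' is an equality of subspaces of the \emph{completed} dual $\hopf^*$, where elements of $\B$ are viewed via the pairing $\innerprod{h_1,h_2}=\delta_{h_1,h_2}$ on $\forest$. Everything else is formal manipulation with the grading ensuring all sums are finite in each degree.
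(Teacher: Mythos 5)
Your proposal is correct and follows essentially the same route as the paper: both sides of the inclusion are proved by exploiting the duality $\innerprod{h,h_1h_2}=\innerprod{\delta h,h_1\otimestilde h_2}$, noting that a single dual tree $\tau$ can only pair nontrivially with a product $h_1h_2$ when one factor is $1$, and conversely that a primitive $h$ must kill every genuine product and the unit. The only cosmetic difference is that the paper decomposes $h=u+v$ with $v\in\B^\perp$, expands $\delta v$ as a formal series and compares basis coefficients, whereas you pair the primitivity identity directly against $h_1\otimestilde h_2$ --- these are the same argument read dually --- and you also spell out the "easy to check" Lie-subalgebra verification that the paper omits.
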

\begin{proof}
It is easy to check that $P(\hopf)$ is a Lie algebra. To check that $P(\hopf) = \B$, first let $\tau\in\trees^*$, then by definition
\begin{equ}
\delta \tau = \sum_{x_1, x_2 \in \forest^0} \inner{\tau, x_1 x_2 } x_1 \otimest x_2\;,
\end{equ}
where $\forest^0 = \forest \cup \{ 1\}$ and where we identify $x_1 \otimest x_2$ with the corresponding element in $\hopf^*\otimest \hopf^*$. But clearly, $\inner{\tau, x_1 x_2}=0$ unless $x_1=1$ or $x_2 = 1$ (but not both). It follows that 
\begin{equ}
\delta \tau = \sum_{x_1 \in \forest } \inner{\tau,x_1} \big(  x_1 \otimest 1 + 1 \otimest x_1\big) =  \tau\otimest 1 + 1 \otimest \tau\;, 
\end{equ}
and hence $\B \subseteq P(\hopf)$. To prove the reverse statement, suppose $h\in P(\hopf)$ and that
\begin{equ}
h = u + v\;,
\end{equ}
where $u \in \B$ and $v \in \B^\perp$, which is the vector space spanned by $1$ and all non-trivial products $\tau_1 \dots \tau_n \in \forest^*$ with $n\geq 2$. Since $u \in P(\hopf)$, it follows that $v=h-u \in P(\hopf)$. Thus,
\begin{align*}
1\otimest v + v\otimest 1 = \delta v = \inner{v,1}1\otimest 1 +  \sum_{\tau_1 \dots \tau_n} \inner{v,\tau_1 \dots \tau_n} \delta(\tau_1 \dots \tau_n)\;, 
\end{align*}
where we only sum over those $\tau_1 \dots \tau_n \in \forest^*$ with $n\geq 2$. By definition of $\delta(\tau_1 \dots \tau_n)$, this equals
\begin{equ}\label{e:Bstar_proof1}
\inner{v,1}1\otimest 1 + \sum_{\tau_1 \dots \tau_n} \inner{v,\tau_1 \dots \tau_n} \sum_{(i,j)} \tau_{i_1} \dots \tau_{i_p}\otimest \tau_{j_1}\dots \tau_{j_q}\;,
\end{equ}
where we sum over all subsets $\{i_1,\dots,i_p\}$, $\{j_1,\dots,j_q\}$ of $\{1,\dots,n\}$. However, each term $ \tau_{i_1} \dots \tau_{i_p}\otimest \tau_{j_1}\dots \tau_{j_q}$ (with $p,q \neq 0$) can only appear once in the expression \eqref{e:Bstar_proof1}, hence there can be no cancellations. Since these terms (as well as $1\otimest 1$) are basis elements of $\hopf^* \otimest \hopf^*$, we must have that $\inner{v,1}=0$ and $\inner{v,\tau_1 \dots \tau_n} = 0$ for all $\tau_1 \dots \tau_n \in \forest$ with $n\geq 2$. It follows that $P(\hopf) \subseteq \B$.    
\end{proof} 
Let $\mathfrak{h}$ be the space of all $h \in \hopf^*$ with $\innerprod{h,1}=0$ and let $H = 1+\mathfrak{h}$. Just as in the tensor product algebra case, the spaces $\mathfrak{h}$ and $H$ are diffeomorphic via the exponential map $\exp_\star : \mathfrak{h} \to H$ given by  
\begin{equ}
\exp_\star h = \sum_{k\geq0} \frac{h^{\star k }}{k!}\;,
\end{equ}  
where $h^{\star k } = h\star h^{\star (k-1)}$. Likewise we can define its inverse, the logarithmic map  by
\begin{equ}
\logstar (1+h) = \sum_{k\geq1} (-1)^{k-1}\frac{h^{\star k }}{k}\;,
\end{equ}  
for any $1+h \in H$. See \cite{manchon04} for further details. This allows us to classify the group-like elements as being the exponential of a Lie element.
\begin{prop}\label{prop:explie}
For any $g \in \mathfrak{h}$, we have that $g \in G(\hopf)$ if and only if $g=\expstar h$ for some $h \in P(\hopf)$. 
\end{prop}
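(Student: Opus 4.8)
The plan is to mimic the classical argument for the tensor algebra (as in Reutenauer's book) but using the dual characterisation of group-like and primitive elements via the pairing with $\hopf$. The key observation is that, by Proposition~\ref{prop:group}, $g \in G(\hopf)$ iff $g$ is a character (i.e.\ $\innerprod{g,h_1h_2}=\innerprod{g,h_1}\innerprod{g,h_2}$ with $\innerprod{g,1}=1$), and by Proposition~\ref{prop:algebra}, $h \in P(\hopf)$ iff $h$ is a derivation (i.e.\ $\innerprod{h,h_1h_2}=\innerprod{1,h_1}\innerprod{h,h_2}+\innerprod{h,h_1}\innerprod{1,h_2}$ with $\innerprod{h,1}=0$). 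Since $\expstar$ and $\logstar$ are mutually inverse bijections between $\mathfrak h$ and $H$, it suffices to prove: $h \in P(\hopf) \Leftrightarrow \expstar h \in G(\hopf)$.

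For the forward direction, suppose $h$ is a derivation. I would show directly that $g = \expstar h$ satisfies $\innerprod{g,h_1 h_2}=\innerprod{g,h_1}\innerprod{g,h_2}$. The cleanest route is to work in $\hopf^* \otimestilde \hopf^*$ (or its relevant completion): the derivation property says $\delta h = h\otimestilde 1 + 1 \otimestilde h$, and since $\delta$ is an algebra morphism for $\star$ (stated in the excerpt: ``$\delta$ is a morphism with respect to $\star$''), we get $\delta(\expstar h) = \expstar(\delta h) = \expstar(h\otimestilde 1 + 1\otimestilde h)$. Because $h\otimestilde 1$ and $1\otimestilde h$ commute in the $\star$-algebra on $\hopf^*\otimestilde\hopf^*$, this factors as $\expstar(h\otimestilde 1)\star\expstar(1\otimestilde h) = (\expstar h)\otimestilde 1 \star 1 \otimestilde(\expstar h) = (\expstar h)\otimestilde(\expstar h)$, which is exactly the group-like condition $\delta g = g\otimestilde g$. (One must check $\innerprod{g,1}=1$, which is immediate since only the $k=0$ term of the exponential contributes.)

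For the converse, suppose $g = \expstar h \in G(\hopf)$, so $\delta g = g \otimestilde g$. Applying $\logstar$ (which makes sense since $g \in H$) and using that $\logstar$ intertwines $\delta$ with the $\star$-algebra structure on the tensor square, $\delta h = \delta(\logstar g) = \logstar(\delta g) = \logstar(g\otimestilde g) = \logstar\bigl((g\otimestilde 1)\star(1\otimestilde g)\bigr)$. Since the two factors commute, $\logstar$ of their $\star$-product is the sum of the $\logstar$'s: $\logstar(g\otimestilde 1) + \logstar(1\otimestilde g) = (\logstar g)\otimestilde 1 + 1 \otimestilde(\logstar g) = h\otimestilde 1 + 1 \otimestilde h$, so $h \in P(\hopf)$.

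The main obstacle, and the point needing care, is justifying the manipulations with $\expstar$, $\logstar$ and $\delta$ on the \emph{topological} dual $\hopf^*$ and on $\hopf^*\otimestilde\hopf^*$: one must confirm that the grading makes all the relevant series (the exponentials, the logarithms, and $\delta$ applied to them) converge in the appropriate sense — i.e.\ that every component is a finite sum — and that $\delta$ is continuous so that it commutes with these infinite sums. This is a standard consequence of the connected grading ($\hopf_{(0)}=\reals$) and the fact that $\delta$ respects it, but it should be spelled out. The identities $\expstar(a+b)=\expstar a \star \expstar b$ and $\logstar(u\star v)=\logstar u + \logstar v$ for \emph{commuting} elements $a,b$ (resp.\ $u,v$) are then formal and reduce to the classical power-series identities once convergence is in hand; the only genuinely algebraic input beyond that is the fact that $\delta$ is a $\star$-morphism, which is already granted in the excerpt.
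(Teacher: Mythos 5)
Your proof is correct and is precisely the argument the paper defers to: its entire stated proof of Proposition~\ref{prop:explie} is the remark that it is identical to the tensor-algebra case in \cite[Theorem 3.2]{reutenauer93}. The Friedrichs-type computation you carry out --- using that $\delta$ is a $\star$-morphism to pass it through $\expstar$ and $\logstar$, then factoring via the commutativity of $h\otimestilde 1$ and $1\otimestilde h$ in $\hopf^*\otimestilde\hopf^*$, with convergence guaranteed by the connected grading --- is exactly that standard proof transported to $(\hopf^*,\star,\delta)$.
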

\begin{proof}
The proof is identical to the tensor product algebra case \cite[Theorem 3.2]{reutenauer93}.
\end{proof}
\subsection{Branched rough paths}\label{subsec:brp}
We define the truncated group-like elements $G_N(\hopf)$, obtained from $G(\hopf)$ by quotienting out the ideal
\begin{equ}
\bigoplus_{k=N+1}^\infty \hopf_{(k)}^*\;,
\end{equ}
hence we identify all elements $\tau_1 \dots \tau_n \in \forest^*$ such that $|\tau_1 \dots \tau_n| \geq N+1$, with zero. From Proposition \ref{prop:explie}, it follows that $G_N(\hopf)$ is diffeomorphic to the real vector space $\trees_N$ and is therefore a Lie group. This Lie group plays precisely the same role as the step $N$ free nilpotent group in the geometric theory of rough paths. Indeed, the definition for branched rough paths follows naturally from that of geometric rough paths. 
\par
Let $X = (X^i)$ be a path in $\reals^d$ with H\"older regularity $\gamma \in (0,1)$. As always, we reserve the symbol $N$ for the largest integer such that $N\gamma \leq 1$.   
\begin{defn}\label{defn:branched}
A map $\X : [0,T]\to G_N(\hopf)$ is called a $\gamma$-H\"older \emph{branched rough path} if it satisfies
\begin{equ}\label{e:Gnorm}
\sup_{s\neq t}\; \frac{|\innerprod{\X_{st},\tau}|}{|t-s|^{\gamma |\tau|}} < \infty\;,
\end{equ}
for every $\tau \in \hopf_N$ and where $\X_{st} \defin \X_s^{-1} \star \X_t$. If $\inner{\X_{st},\bullet_i} = \delta X_{st}^i$ for each $i=1\dots d$, then we call $\X$ a branched rough path \emph{above} $X$.
\end{defn}
We see that the generalised version of Chen's property, or Condition (2) of the introduction, is immediate from the definition, since we have
\begin{equ}\label{e:chenstar}
\X_{su}\star\X_{ut} = (\X_s^{-1}\star \X_{u})\star(\X_u^{-1}\star\X_t) = \X_{st}\;.
\end{equ} 
Moreover, Definition \ref{defn:branched} is clearly equivalent to the original definition in \cite{gubinelli10a} and also stated in the introduction. In particular, Condition 1 from the original definition can be reformulated as $\X_{st} \in G_N(\hopf)$ for each $s,t\in[0,T]$. 
\begin{rmk}
As we shall see, the solution to an RDE only depends on the increment $\X_{st}$ rather than the path $\X_t$, hence there is no need to specify the initial value of the path $\X_0$. 
\end{rmk}
\begin{rmk}
In Definition \ref{defn:branched}, to justify calling $\X$ a $\gamma$-H\"older path, it should satisfy $d(\X_s,\X_t) \leq C|t-s|^\gamma $ for some metric $d$. This can be achieved using \emph{homogeneous norms}. For the step $N$ free nilpotent group, as with any Carnot group, one can show that all ``norms'' that are sub-additive and homogeneous with respect to the natural dilation of the group are equivalent \cite{lyons07}. This does not quite work with $G_N(\hopf)$, since it is not a Carnot group with respect to the right dilation. To be precise, we see that
\begin{equ}
\G_N(\hopf) = \expstar \left( \bigoplus_{k=0}^N \B_{(k)} \right)\;,
\end{equ}  
where $\B_{(k)}$ is the vector space spanned by $\trees_{(k)}$ . If $y_k \in \B_{(k)}$, then the natural dilation on $G_N(\hopf)$ is given by 
\begin{equ}
\delta_t \expstar \left(y_1 +  \dots +  y_N \right) = \expstar \left( t y_1 + t^2 y_2 + \dots + t^N y_N \right)\;.
\end{equ} 
In particular, in the case of a smooth path $X$ whose branched rough path $\X$ is given by the corresponding iterated integrals, if we multiplied $X$ by $t$, then we would obtain a factor of $t^{|\tau|}$ infront of $\innerprod{\X,\tau}$, so it is clear that this is the right choice of dilation. On the other hand, the only way $G_N(\hopf)$ could be a Carnot group is if we let all of $\B_N$ have the same grade, which would lead to useless norms. The correct notion is to view $G_N(\hopf)$ as a \emph{homogeneous group}, as defined in \cite{folland}. A homogeneous group $G$ is a Lie group whose Lie algebra is graded, and hence comes with a natural dilation. A (non-smooth) homogeneous norm on $G$ is then a map $\norm{\cdot} : G \to [0,\infty)$ that is continuous with respect to the manifold topology of $G$ and satisfies the homogeneity property $\norm{\delta_t g} = |t| \norm{g}$, where $\delta_t$ is the natural dilation of $G$ (along with other standard conditions). It is easy to show that all homogeneous norms on $G$ are equivalent. In the case of $G_N(\hopf)$, one can show that all homogeneous norms are equivalent to the natural homogeneous norm          
\begin{equ}
\norm{g}_{G_N(\hopf)} = \sum_{\tau \in \trees_N} |\innerprod{\logstar g,\tau}|^{1/|\tau|}\;. 
\end{equ}   
Moreover, it is easy to verify that the map $\X : [0,T] \to \big(G_N(\hopf),\norm{\cdot}_{G_N(\hopf)}\big)$ is $\gamma$-H\"older continuous if and only if condition \eqref{e:Gnorm} is satisfied. We can therefore equivalently define a branched rough path as a $\gamma$-H\"older path taking values in $\big(G_N(\hopf),\norm{\cdot}_{G_N(\hopf)}\big)$. 
\end{rmk}
As with classical rough paths, one can show that every branched rough path $\X$ can be canonically extended to a $\gamma$-H\"older continuous path taking values in $G(\hopf)$, courtesy of the \emph{sewing map} \cite{gubinelli04,gubinelli10a}. In more generality, branched rough paths also extend the idea of an almost multiplicative functional, in the following way. One calls $\Xtilde$ an \emph{almost branched rough path} if
\begin{equ}
|\innerprod{\Xtilde_{st} - \Xtilde_{su}\star\Xtilde_{ut}}|= \liloh(|t-s|)\;,
\end{equ}
for all $s,t \in [0,T]$, with $|t-s|\ll 1$. And moreover, we have the following
\begin{prop}
For every almost branched rough path $\Xtilde$, there exists a unique branched rough path $\X$ of regularity $\gamma$ such that
\begin{equ}
|\innerprod{\X_{st}-\Xtilde_{st},\tau}| = \liloh(|t-s|)\;,
\end{equ}
for all $s,t \in [0,T]$ and $\tau \in \trees_N$. 
\end{prop}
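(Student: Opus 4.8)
The plan is to build $\X$ one graded level at a time, applying the additive sewing lemma of \cite{gubinelli04,gubinelli10a} at each level. The structural fact that makes this work is Remark~\ref{rmk:cuts}: for $\tau\in\trees$ with $|\tau|=n$ one has $\Delta\tau=\tau\otimest 1+1\otimest\tau+\tau^{1}\otimest\tau^{2}$, where every term $\tau^{1}\otimest\tau^{2}$ of the reduced coproduct satisfies $|\tau^{1}|,|\tau^{2}|\geq 1$ and $|\tau^{1}|+|\tau^{2}|=n$, so that Chen's relation for the component $\inner{\X_{st},\tau}$ couples it only to components indexed by trees of strictly smaller degree. I would therefore induct on $n=1,\dots,N$, maintaining that $\inner{\X_{st},\tau}$ has been defined for all $\tau\in\trees$ with $|\tau|\leq n$, that the resulting character $\X_{st}$ of $\hopf_n$ (obtained by setting $\inner{\X_{st},\tau_1\cdots\tau_k}\defin\prod_i\inner{\X_{st},\tau_i}$) lies in $G_n(\hopf)$ with $\X_{st}=\X_{su}\star\X_{ut}$, that $|\inner{\X_{st},\tau}|\lesssim|t-s|^{\gamma|\tau|}$, and that $|\inner{\X_{st}-\Xtilde_{st},\tau}|=\liloh(|t-s|)$. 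The extension to forests is consistent with Chen's relation because a $\star$-product of characters is again a character, so it is enough to control the relations on trees; for the same reason $\X$ stays character-valued throughout.

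For $n=1$ each generator $\bullet_i$ is primitive, so $(s,t)\mapsto\inner{\Xtilde_{st},\bullet_i}$ is almost additive, and the additive sewing lemma yields a unique exactly-additive replacement $\inner{\X_{st},\bullet_i}$ that is $\liloh$-close to it; this is trivially multiplicative at level one. For the inductive step fix $\tau$ with $|\tau|=n\geq 2$ and set $Q_{sut}\defin\sum\inner{\X_{su},\tau^{1}}\inner{\X_{ut},\tau^{2}}$, the sum running over the non-trivial cuts of $\tau$; all arguments have degree $<n$, so $Q$ is already known, and coassociativity of $\Delta$ together with the level-$<n$ multiplicativity of $\X$ shows that $Q$ has the compatibility needed to be the Chen defect of a sewable function. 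Expanding $\inner{\Xtilde_{su}\star\Xtilde_{ut},\tau}$ by the coproduct, inserting $\inner{\Xtilde_{st},\tau}=\inner{\Xtilde_{su}\star\Xtilde_{ut},\tau}+\liloh(|t-s|)$, and replacing each $\inner{\Xtilde_{su},\tau^{1}}$ by $\inner{\X_{su},\tau^{1}}$ and each $\inner{\Xtilde_{ut},\tau^{2}}$ by $\inner{\X_{ut},\tau^{2}}$ — which costs only a $\liloh(|t-s|)$ error, since $|\tau^{1}|,|\tau^{2}|\geq 1$ makes every such factor come paired with a companion of size $\lesssim|t-s|^{\gamma}$ — gives
\[
\inner{\Xtilde_{st},\tau}-\inner{\Xtilde_{su},\tau}-\inner{\Xtilde_{ut},\tau}=Q_{sut}+\liloh(|t-s|)\;.
\]
The sewing lemma then produces a unique $(s,t)\mapsto\inner{\X_{st},\tau}$ with $\inner{\X_{st},\tau}-\inner{\X_{su},\tau}-\inner{\X_{ut},\tau}=Q_{sut}$ exactly — that is, $\inner{\X_{st},\tau}=\inner{\X_{su}\star\X_{ut},\tau}$ — and $\liloh$-close to $\inner{\Xtilde_{st},\tau}$. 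The sharp Hölder bound comes for free: $n\leq N$ gives $\gamma n\leq 1$, so anything that is $\liloh(|t-s|)$ is a fortiori $\liloh(|t-s|^{\gamma n})$, while boundedness of the components handles $|t-s|$ bounded away from $0$. This closes the induction; the path is then recovered as $\X_t\defin\X_{0t}$, and $\X_{st}=\X_s^{-1}\star\X_t$ by multiplicativity.

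Uniqueness is an induction of the same shape. If $\X,\X'$ are branched rough paths each $\liloh$-close to $\Xtilde$, put $D^{\tau}_{st}\defin\inner{\X'_{st}-\X_{st},\tau}$, so $D^{\tau}_{st}=\liloh(|t-s|)$ by the triangle inequality. Assuming $\X=\X'$ on all trees of degree $<|\tau|$, subtracting the Chen relations for $\X$ and $\X'$ gives $D^{\tau}_{st}-D^{\tau}_{su}-D^{\tau}_{ut}=\sum\bigl(\inner{\X'_{su},\tau^{1}}\inner{\X'_{ut},\tau^{2}}-\inner{\X_{su},\tau^{1}}\inner{\X_{ut},\tau^{2}}\bigr)=0$, so $D^{\tau}$ is additive and $o(|t-s|)$; writing it as a telescoping sum over a partition of $[s,t]$ with mesh tending to $0$ forces $D^{\tau}\equiv 0$, which pins down $\X$ uniquely.

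The one genuinely delicate ingredient is the sewing lemma itself: that an $\liloh$-almost-Chen two-parameter function can be corrected, uniquely, to an exactly-Chen one that stays $\liloh$-close. I would quote this from \cite{gubinelli04,gubinelli10a} rather than reprove it; the point requiring care there is that the defect is only $\liloh(|t-s|)$, not a quantitative $|t-s|^{\theta}$ with $\theta>1$, so the partition-refinement series needs the sharper form of the estimate. Everything else is routine bookkeeping, and it closes precisely because every non-trivial cut in Remark~\ref{rmk:cuts} splits the degree into two strictly positive parts — this is what lets each lower-degree replacement error be absorbed into $\liloh(|t-s|)$ and what yields the sharp Hölder bounds at the higher levels.
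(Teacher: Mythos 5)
The paper does not actually prove this statement; it cites \cite[Theorem~7.7]{gubinelli10a} and moves on. Your proposal supplies a genuine proof, and the strategy you use — induction on the tree degree $n$, reducing at each level to a one-dimensional sewing problem for $\inner{\cdot,\tau}$ with $|\tau|=n$, using the fact that the reduced coproduct of a degree-$n$ tree only involves strictly lower degrees — is, to my reading, exactly the strategy behind Gubinelli's Theorem~7.7, so this is a faithful reconstruction rather than an alternative route.

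One step deserves a touch more care than you give it. The sewing lemma as stated in the paper produces a function whose Chen defect is \emph{zero}, i.e.\ the increment of a genuine path; you invoke it to produce $\inner{\X_{st},\tau}$ whose Chen defect is the prescribed, non-vanishing quantity $Q_{sut}=\sum\inner{\X_{su},\tau^1}\inner{\X_{ut},\tau^2}$. You gesture at this with the remark that coassociativity and the already-established lower-level Chen relation give $Q$ ``the compatibility needed to be the Chen defect of a sewable function,'' which is correct but leaves the reduction implicit. The clean way to close it: the lower-level part of $\X$ already defines a $G_{n-1}(\hopf)$-valued multiplicative functional, and one can extend it \emph{arbitrarily} to a $G_n(\hopf)$-valued path (e.g.\ by declaring $\inner{\X^0_{0t},\tau}=0$ for all $|\tau|=n$ and setting $\X^0_{st}\defin(\X^0_{0s})^{-1}\star\X^0_{0t}$). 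The resulting reference $A^0_{st}\defin\inner{\X^0_{st},\tau}$ automatically has defect exactly $Q_{sut}$; subtracting it, $\inner{\Xtilde_{st},\tau}-A^0_{st}$ has defect $o(|t-s|)$ and is now in the exact form the sewing lemma wants, and $A^0_{st}+\delta Y_{st}$ is your $\inner{\X_{st},\tau}$. The remaining ingredients — the $o(|t-s|)$ cost of replacing $\Xtilde$ by $\X$ in the cut sum (because every non-trivial cut has both halves of positive degree), the derivation of the sharp $|t-s|^{\gamma n}$ bound from $\gamma n\le 1$ together with the regularity built into the definition of an almost rough path, the character-valuedness being preserved since characters are determined by their values on $\trees$, and the telescoping uniqueness argument — are all correct.
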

\begin{proof}
See \cite[Theorem 7.7]{gubinelli10a}. 
\end{proof}
Although we will not explicitly use the notion of an almost branched rough path, we include the definition to illustrate that all of the important tools for multiplicative functionals are still present in the setting of branched rough paths. 

\section{Controlled rough paths and solving RDEs}\label{sec:controlled}
In this section we recall the definition of a \emph{controlled rough path}, first defined in  \cite{gubinelli04} and later extended to branched rough paths in \cite{gubinelli10a}. We show how one can define rough integrals and moreover solutions to RDEs using this simple concept. 
\subsection{Controlled rough paths}
A crucial step in the theory of geometric rough paths is defining the integral of a one-form along a geometric rough path \cite{lyons98}. For $\alpha: \reals^d \to L(\reals^d,\reals)$ and a geometric rough path $\X$ above $X \in \reals^d$, in order to define $\int \alpha(X) dX$ one needs to impose a ${\rm{Lip}}(\beta)$ condition on $\alpha$, which states that for $j=1\dots N$, there exists $\alpha^j : \reals^d \to L((\reals^d)^{\otimes j}, \reals)$ such that $\alpha^1 = \alpha$ and
\begin{equ}\label{e:alpha_higher}
\alpha^j(X_t) = \sum_{i=0}^{N-j} \alpha^{i+j}(X_s)(\X_{st}^i) + R^j(X_s,X_t)\;,
\end{equ}
where $\X_{st}^i$ is the component of $\X_{st}$ in $(\reals^d)^{\otimes i}$ and the remainders $R^j$ satisfy $|R^j(\xi,\eta)| \leq M |\xi - \eta|^{\beta - j}$. In particular, from the $j=1$ case we see that
\begin{equ}\label{e:alpha_controlled}
\alpha(X_t) - \alpha(X_s)  = \sum_{i=1}^{N-1} \alpha^{i+1}(X_s)(\X_{st}^i) + R^1(X_s,X_t)\;,
\end{equ}
and hence the increment of $\alpha(X)$ is (locally) controlled by $\X$. The expression \eqref{e:alpha_controlled} leads directly to a definition of an almost multiplicative functional $\bigytilde$ which is subsequently extended to define $\int \alpha(X) dX$. The conditions on the higher order $\alpha^j$ given in \eqref{e:alpha_higher} are required to ensure that $\bigytilde$ actually is an almost multiplicative functional and thus prove that the map $\X \mapsto \int \alpha(X) dX$ is continuous in the $p$-var topology.   
\par
In the theory of controlled rough paths, the construction of integrals is more-or-less the same, except for that fact that one-forms are replaced with any object that satisfies a condition like \eqref{e:alpha_higher}. In particular let $\X$ be a branched rough path above $X$ and suppose $Z : [0,T] \to \reals$ satisfies
\begin{equ}\label{e:deltaZ}
\delta Z_{st} = \sum_{h\in \forest_{N-1}} Z^h_{s} \inner{\X_{st},h} + R^Z_{st}\;,
\end{equ}
where $|R^Z_{st}| \leq C |t-s|^{N\gamma}$ and the coefficients $Z^h : [0,T] \to \reals$. 
It is clear that the integral $\int_s^t Z_r dX^i_r$ should be approximated by the expression
\begin{equ}\label{e:ztilde_def}
\Ztilde_{st} =   Z_s \inner{\X_{st},\bullet_i} +  \sum_{h \in \forest_{N-1}}  Z^{h}_s \inner{\X_{st},[h]_i} = \sum_{h \in \forest_{N-1}^0} Z^{h}_s \inner{\X_{st},[h]_i} \;,
\end{equ}
for $|t-s| \ll 1$ where we denote $Z_s^1 = Z_s$ and $\forest_{N-1}^0 = \forest_{N-1}\cup \{1\}$, since one would expect
\begin{equ}
\int_s^t dX_r^i = \inner{\X_{st},\bullet_i} \quad \text{and} \quad \int_s^t \inner{\X_{sr},h}dX_r^i = \inner{\X_{st},[h]_i}\;.
\end{equ}
This idea is formalised by the \emph{sewing map}. The sewing map is essentially the same as the map which extends an almost multiplicative functional to an (approximately equal) multiplicative functional. 

\begin{lemma}[Sewing Map]
For any $\Ztilde : [0,T]\times[0,T] \to \reals$, if 
\begin{equ}\label{e:sewing_condition}
|\Ztilde_{st} - \Ztilde_{su} - \Ztilde_{ut}| \leq C|t-u|^{p}|u-s|^{q}\;,
\end{equ}
for some $p+q > 1$, then there exists a unique remainder terms $r: [0,T]\times [0,T] \to \reals$ such that $\Ztilde_{st} + r_{st}$ is the increment of a path and $|r_{st}| = \liloh(|t-s|)$. That is, there is some $Y: [0,T]\to \reals$ such that 
\begin{equ}
\delta Y_{st} = \Ztilde_{st} + r_{st}\;.
\end{equ}
for all $s,t \in [0,T]$. Moreover, it follows immediately that
\begin{equ}
\delta Y_{st} = \lim_{\partition \to 0} \sum_{[u,v]\in \partition} \Ztilde_{uv}\;,
\end{equ}
for any sequence of partitions $\partition$ with mesh-size tending to zero.  

\end{lemma}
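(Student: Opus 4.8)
The plan is to realise $Y$ by constructing its increment $A_{st}\defin \delta Y_{st}$ as a limit of Riemann-type sums over partitions, and then simply \emph{defining} $r_{st}\defin A_{st}-\Ztilde_{st}$. For a partition $\partition=\{s=t_0<t_1<\dots<t_n=t\}$ of $[s,t]$, write $\Ztilde^\partition_{st}\defin\sum_{i=0}^{n-1}\Ztilde_{t_it_{i+1}}$, and let $D_{sut}\defin\Ztilde_{st}-\Ztilde_{su}-\Ztilde_{ut}$, so that the hypothesis \eqref{e:sewing_condition} reads $|D_{sut}|\leq C|t-u|^p|u-s|^q\leq C|t-s|^{p+q}$; taking $s=u=t$ also gives $\Ztilde_{ss}=0$, so ``the increment of a path'' is indeed the right target. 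The first and only delicate step is a \emph{partition-independent} bound $|\Ztilde^\partition_{st}-\Ztilde_{st}|\leq K|t-s|^{p+q}$ with $K=K(C,p,q)$. If $n\geq2$, a pigeonhole argument (using $\sum_{i=1}^{n-1}(t_{i+1}-t_{i-1})\leq 2|t-s|$) produces an interior node $t_i$ with $t_{i+1}-t_{i-1}\leq\frac{2}{n-1}|t-s|$; deleting $t_i$ from $\partition$ changes $\Ztilde^\partition_{st}$ by exactly $-D_{t_{i-1}t_it_{i+1}}$, of modulus $\leq C(t_{i+1}-t_{i-1})^{p+q}$. Iterating this coarsening down to the trivial partition and summing the series $\sum_{m\geq1}m^{-(p+q)}$, convergent precisely because $p+q>1$, yields the claimed bound.

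Next, for any two partitions of $[s,t]$ with common refinement $\partition''$, applying the above bound on each subinterval of a partition $\partition$ gives $|\Ztilde^\partition_{st}-\Ztilde^{\partition''}_{st}|\leq K\sum_{[u,v]\in\partition}(v-u)^{p+q}\leq K(\mathrm{mesh}\,\partition)^{p+q-1}|t-s|$, and symmetrically for the other partition; hence $(\Ztilde^\partition_{st})$ is Cauchy as $\mathrm{mesh}\,\partition\to0$, so $A_{st}\defin\lim_{\mathrm{mesh}\,\partition\to0}\Ztilde^\partition_{st}$ exists and is independent of the chosen sequence of partitions. This is exactly the ``Moreover'' statement of the lemma. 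Letting $\mathrm{mesh}\,\partition\to0$ in the partition-independent bound gives $|A_{st}-\Ztilde_{st}|\leq K|t-s|^{p+q}=\liloh(|t-s|)$. Additivity $A_{st}=A_{su}+A_{ut}$ for $s<u<t$ follows by taking partitions $\partition_1$ of $[s,u]$ and $\partition_2$ of $[u,t]$ with mesh tending to zero, whose union is a partition of $[s,t]$ along which the Riemann sum splits as $\Ztilde^{\partition_1\cup\partition_2}_{st}=\Ztilde^{\partition_1}_{su}+\Ztilde^{\partition_2}_{ut}$, and passing to the limit. Setting $Y_t\defin A_{0t}$ then gives $\delta Y_{st}=A_{st}=\Ztilde_{st}+r_{st}$, with $r$ as defined above.

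For uniqueness, suppose $Y'$ is another path with $\delta Y'_{st}=\Ztilde_{st}+r'_{st}$ and $|r'_{st}|=\liloh(|t-s|)$. Then $g_{st}\defin\delta(Y-Y')_{st}=r_{st}-r'_{st}$ is additive and satisfies $|g_{st}|=\liloh(|t-s|)$. For fixed $s,t$ and any partition $\partition$, additivity gives $g_{st}=\sum_{[u,v]\in\partition}g_{uv}$, so $|g_{st}|\leq\big(\sup_{v-u\leq\mathrm{mesh}\,\partition}|g_{uv}|/(v-u)\big)|t-s|$, which tends to $0$; hence $g\equiv0$, so $Y-Y'$ is constant and $r=r'$.

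The one place where care is genuinely needed is the partition-independent bound in the first step — the coarsening/pigeonhole estimate together with the bookkeeping of constants needed to make it uniform over \emph{all} partitions at once — since that single estimate is what feeds the $\liloh(|t-s|)$ property, the convergence of the Riemann sums, and hence the ``Moreover'' clause. Everything else is routine. Of course this is the classical sewing lemma of \cite{gubinelli04}, so one could alternatively just cite it; the self-contained argument above is included for completeness.
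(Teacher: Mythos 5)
The paper does not prove this lemma: it is stated without proof as a known result (it is Gubinelli's sewing lemma, \cite{gubinelli04}, itself a reformulation of arguments going back to Young and Lyons). Your reconstruction is correct and is precisely the standard coarsening/pigeonhole argument used in those references, so there is nothing in the paper to compare it against. Two very small remarks: deleting $t_i$ changes $\Ztilde^\partition_{st}$ by $+D_{t_{i-1}t_it_{i+1}}$ rather than $-D_{t_{i-1}t_it_{i+1}}$ with your sign convention (immaterial since only the modulus is used); and the uniqueness step tacitly reads $|r_{st}|=\liloh(|t-s|)$ in the locally uniform sense $\sup_{|t-s|\le\eps}|r_{st}|/|t-s|\to 0$, which is the standard convention in this literature and also what your construction actually produces (a bound $K|t-s|^{p+q}$), so this is fine but worth stating explicitly.
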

Just as in \eqref{e:alpha_higher}, one needs conditions on the coefficients $Z_s^h$ to ensure that $\Ztilde$ defined in \eqref{e:ztilde_def} satisfies \eqref{e:sewing_condition}. The most convenient way of defining these controlled objects $Z$ along with their coefficients $Z^h$ is to consider them as one object $\bigz : [0,T] \to \hopf_{N-1}$, by setting
\begin{equ}
\inner{1,\bigz_t} = Z_t \quad \text{and} \quad \inner{h,\bigz_t} = Z^h_t\;, 
\end{equ}
for all $h \in \forest_{N-1}$. In the sequel we use the notation $\forest_{n}^0 = \forest_n \cup\{1\}$ and similarly for $\trees_n^0$. 
\begin{defn}
Let $\X$ be a $\gamma$-H\"older branched rough path. An $\X$-controlled rough path is a path $\bigz : [0,T] \to \hopf_{N-1}$ satisfying
\begin{equ}\label{e:consistency}
\inner{h,\bigz_t} =  \inner{\X_{st} \star h,\bigz_s} + R^h_{st}\;,
\end{equ}
for each $h \in \forest_{N-1}^0$, where $|R^{h}_{st}| \leq C |t-s|^{(N-|h|)\gamma}$. When $\inner{1,\bigz_t} = Z $, we say that $\bigz$ is a controlled rough path above $Z$. 
\end{defn}

Note that when $h=1$ and $\inner{1,\bigz}=Z$, the expression \eqref{e:consistency} can be written
\begin{equ}
\delta Z_{st} = \sum_{h\in\forest_{N-1}} \inner{h,\bigz_s} \inner{\X_{st},h} + R_{st}^1 \;,
\end{equ}
just as suggested in \eqref{e:deltaZ}. It is clear that \eqref{e:consistency} is simply the $\hopf$ counterpart of the ${\rm{Lip} (\beta)}$ condition \eqref{e:alpha_higher}.

\begin{rmk}
We can easily adapt this to the situation in which the coefficients of the `controlled object' take values in $\reals^e$ rather than $\reals$. In this case we have $\bigz: [0,T] \to (\hopf_{N-1})^e$ where $(\hopf_{N-1})^e$ denotes the $e$-th cartesian power of $\hopf_{N-1}$. Hence, the coefficients $\inner{h,\bigz}$ take values in $\reals^e$ and we denote the $i$-th component by $\inner{h,\bigz}_i$.
\end{rmk}
\par
Let $\bigz$ be an $\X$-controlled rough path above $Z$, then we can use $\bigz$ to define the integral $\int Z dX^i$, for any $1\leq i \leq d$. It is an easy exercise to check that the condition \eqref{e:consistency} ensures that 
\begin{equ}
\Ztilde_{st} = \sum_{h \in \forest_{N-1}^0} \inner{h,\bigz_s}\inner{\X_{st}, [h]_i}
\end{equ}
satisfies \eqref{e:sewing_condition}. From the sewing lemma, it follows that there exists a unique remainder $r$ with $|r_{st}| = \liloh(|t-s|)$ such that $\Ztilde_{st} + r_{st}$ is the increment of a path. Naturally, this increment is chosen as a definition of the integral
\begin{equ}\label{e:path_def}
\int_s^t Z_r dX_r^i  \defin \Ztilde_{st} + r_{st} = \lim_{\partition \to 0} \sum_{[u,v]\in \partition} \Ztilde_{uv}\;,
\end{equ}
for any partition $\partition$ of $[s,t]$. Hence, we have defined a map which sends a controlled rough path $\bigz$ to a path $\int Z dX^i$. This map can be extended to $\integration : \bigz \mapsto \int \bigz dX^i$, where $\int \bigz dX^i$ is a controlled rough path above $\int Z dX^i$. To define $\int \bigz dX^i$, we simply specify $\inner{h,\int \bigz dX^i}$ for all dual basis elements $h\in \forest_{N-1}^* \cup \{1\}$. Firstly, we let $\inner{1,\int_0^t \bigz_r dX_r^i}$ be the unique (up to an additive constant) path satisfying 
\begin{equ}
\innerbig{1,\int_0^t \bigz_r dX_r^i } - \innerbig{1,\int_0^s \bigz_r dX_r^i } = \int_s^t Z_r dX_r^i\;,
\end{equ}
and then define the coefficients by
\begin{equ}
\innerbig{[\tau_1 \dots \tau_n]_i,\int_0^t \bigz_r dX_r^i} = \innerbig{\tau_1 \dots \tau_n,\bigz_t}\;, 
\end{equ}
for all $[\tau_1 \dots \tau_n]_i \in \trees_{N-1}$ with $i$ fixed and
\begin{equ}
\quad \innerbig{\tau_1 \dots \tau_n,\int_0^t \bigz_t dX_r^i} = 0 \quad \text{otherwise}\;. 
\end{equ}
More generally, if $\bigz = (\bigz^1,\dots,\bigz^d)$ where each $\bigz^i$ is an $\X$-controlled rough path above $Z^i$, then we can define an $\X$-controlled rough path $\int \bigz \cdot dX$ above $\int Z\cdot dX$, where $Z= (Z^1,\dots,Z^d)$. To do this, we set
\begin{equ}
\innerbig{1,\int_s^t \bigz_r \cdot dX_r} = \sum_{i=1}^d \innerbig{1,\int_s^t \bigz^i_r dX^i_r} \;,
\end{equ}
with coefficients
\begin{equ}
\innerbig{[\tau_1 \dots \tau_n]_i,\int_0^t \bigz_r \cdot dX_r} = \inner{h,\bigz^i_t}\;
\end{equ}
for all $[\tau_1 \dots \tau_n]_i \in \trees_{N-1}$ and each $1\leq i \leq d$ and
\begin{equ}
\innerbig{\tau_1 \dots \tau_n,\int_0^t \bigz_r \cdot dX_r}  = 0 \quad \text{otherwise}\;. 
\end{equ}
For verification that $\int \bigz dX^i$ satisfies \eqref{e:consistency} and hence actually is a controlled rough path, see \cite[Theorem 8.5]{gubinelli10a}. 
\begin{rmk}
Since the definition of $\int Z dX^i$ depends on how we define a controlled rough path above $Z$, it makes more sense to use the controlled rough path notation $\int \bigz dX^i$. 
\end{rmk} 
\par
Not only are controlled rough paths stable under the integration map, but they are also stable under composition by smooth functions. We will demonstrate this for a controlled rough path $\bigz: [0,T] \to (\hopf_{N-1})^e$ and a smooth function $\phi : \reals^e \to \reals^e$. We first introduce the notation
\begin{equ}
D^n \phi(u) :  (v_1 ,\dots ,v_n) = \sum_{\alpha_1,\dots,\alpha_n=1}^e \del^{\alpha_1} \dots \del^{\alpha_n} \phi(u) v_1^{\alpha_1} \dots v_n^{\alpha_n}\;,
\end{equ}
where $u, v_i \in \reals^e$, $v_i^j$ denotes the $j$-th component of $v_i$. We define a controlled rough path $\phi(\bigz) : [0,T] \to (\hopf_{N-1})^e$  above $\phi(Z)$ using a Taylor expansion. In particular, we have that 
\begin{equ}\label{e:control_taylor}
\phi(Z_t) - \phi(Z_s) = \sum_{n=1}^{N-1} \frac{1}{n!}D^{n}\phi(Z_s): (\inner{h_1,\bigz_s},\dots,\inner{h_n,\bigz_s}) \innerprod{\X_{st},h_1 \dots h_n} + R^\phi_{st}\;,
\end{equ}
where we sum over all $h_i \in \forest$ with $|h_1| + \dots + |h_n| \leq N-1$ and $|R_{st}^\phi| \leq C |t-s|^{N\gamma}$. It is clear that the controlled rough path $\phi(\bigz)$ should have $\inner{1,\phi(\bigz_s)} = \phi(Z_s)$ and coefficients
\begin{equ}
\inner{h,\phi(\bigz)_s} = \sum_{n=1}^{N-1} \sum_{h_1 \dots h_n = h} \frac{1}{n!}D^{n}\phi(Z_s): (\inner{h_1,\bigz_s},\dots,\inner{h_n,\bigz_s})\;, 
\end{equ}
where we sum over all $h_1, \dots , h_n$ appearing in \eqref{e:control_taylor} such that $h_1 \dots h_n = h$. For verification that $\phi(\bigz)$ satisfies \eqref{e:consistency}, see \cite{gubinelli10a}. 
\begin{example}\label{eg:integral}
As an exercise, we will calculate $\int_s^t F(X)dX^i$, where $F:\reals^d \to \reals^d$ is a smooth function and $X$ has a branched rough path $\X$ above it. Firstly, since $\X$ is clearly an $\X$-controlled rough path, we can define $F(\X)$. We set $\inner{1,F(\X_t)} = F(X_t)$ and 
\begin{equ}
\inner{\bullet_{\beta_1} \dots \bullet_{\beta_m},F(\X_t)} = \del^{\beta_1} \dots \del^{\beta_m} F(X_t)\;,
\end{equ}
for all $\bullet_{\beta_1} \dots \bullet_{\beta_m} \in \forest_{N-1}$ and $\inner{h,F(\X_t)}=0$ otherwise. We then have 
\begin{align*}
\int_s^tF(X_r) &dX^i_r = F(X_s)\inner{\X_{st},\bullet_{i}} \\ &+ \sum_{m=1}^{N-1} \sum_{\bullet_{\beta_1}\dots\bullet_{\beta_m} \in \forest_{N-1}} \inner{\bullet_{\beta_1} \dots \bullet_{\beta_m}, F(\X_s)}\innerbig{\X_{st},[\bullet_{\beta_1} \dots \bullet_{\beta_m}]_i} + \liloh(|t-s|) \\
&= \sum_{m=0}^N \sum_{\beta_1,\dots,\beta_m=1}^d \frac{\del^{\beta_1}\dots\del^{\beta_m}F(X_s)}{m!}\innerbig{\X_{st},[\bullet_{\beta_1} \dots \bullet_{\beta_m}]_{i}} +\liloh(|t-s|)\;, 
\end{align*}
where in the last line we have used the symmetry of the expression to replace $\sum_{\bullet_{\beta_1}\dots\bullet_{\beta_m} \in \forest_N} $, the unordered sum, with $\sum_{\beta_1,\dots,\beta_m=1}^d 1/m!$.
\end{example}

The set of $\X$-controlled rough paths is easily seen to be a vector space. One can turn it into a Banach space, denoted $\controlled$, by introducing the norm
\begin{equ}
\norm{\bigz}_{\controlled} = |\bigz_0| + \sum_{h \in \forest_{N-1}} \norm{R^h}_{(N-|h|)\gamma}\;,
\end{equ}
where $\norm{f}_{(N-|h|)\gamma} =  \sup_{s\neq t} \frac{|f_{st}|}{|t-s|^{(N-|h|)\gamma}}$. The Banach space $\controlled$ turns out to be the right environment in which to solves RDEs. 

\subsection{Solving Rough DEs}
The foremost example of a controlled rough path is the solution to an RDE. We will consider the equation
\begin{equ}\label{e:solveRDE}
\delta Y_{st} = \int_s^t f(Y_r)\cdot dX_r \quad \text{with} \quad {Y_0 = \xi}\;,
\end{equ}
for every $s,t\in[0,T]$, where $X=(X^i) \in \reals^d$ and $f(Y)\cdot dX = \sum_{i=1}^d f_i(Y)dX^i$. The vector fields $f_i: \reals ^e \to \reals^e$ are assumed to be as smooth as required. To solve this RDE, we must specify a branched rough path $\X$ above $X$. In \cite{gubinelli10a}, solutions to \eqref{e:solveRDE} are defined by lifting the problem to the space of $\X$-controlled rough paths.
\begin{defn}
A path $Y: [0,T] \to \reals^e$ with $Y_0 = \xi$ is a solution to \eqref{e:solveRDE} if and only if there exists an $\X$-controlled rough path $\bigy$ above $Y$ satisfying
\begin{equ}\label{e:control_fixpoint}
\bigy_t - \bigy_s = \int_s^t f(\bigy_r)\cdot dX_r
\end{equ}
for every $s,t \in [0,T]$. 
\end{defn}
One can define the fixed point map $\fixmap : \controlled \to \controlled$ by $(\fixmap \bigy)_t = \int_0^t f(\bigy_r)\cdot dX_r$. Since $\controlled$ is a Banach space, we can apply standard fixed point arguments on $\fixmap$ to obtain existence and uniqueness results for \eqref{e:control_fixpoint}. In particular, if the vector fields $f_i$ have $N$ continuous and bounded derivatives, then global solutions exist for any initial condition. Moreover, if the vector fields have $N+1$ continuous and bounded derivatives then the solution is unique \cite[Theorem 8.8]{gubinelli10a}. Throughout the sequel, we will always assume the vector fields are smooth enough to guarantee existence and uniqueness of solutions. 
\par
In this article we are more concerned with the structure of RDEs, and would like an explicit representation of the controlled rough path solution to \eqref{e:control_fixpoint}. In particular, it is easy to see that a controlled rough path $\bigy$ is a solution if and only if 
\begin{equ}
\delta Y_{st}= \innerbig{1,\int_s^t f(\bigy_r)\cdot dX_r} 
\end{equ}
and 
\begin{equ}
\inner{[\tau_1 \dots \tau_n]_i, \bigy_s} = \innerbig{[\tau_1 \dots \tau_n]_i, \int_0^s f(\bigy_r) \cdot dX_r}\;, \end{equ}
for all $[\tau_1\dots\tau_n]_i \in \trees_{N-1}^*$ with $i=1\dots d$ and
\begin{equ}
 \inner{\tau_1 \dots \tau_m,\bigy_s} = 0\;,
\end{equ}
for all non-trivial products $\tau_1 \dots \tau_m \in \forest_N^\star \setminus \trees_N^\star$. Using the definition of $\int f(\bigy) \cdot dX$, we can refine the condition on the coefficients to $\inner{\bullet_i,\bigy_s} = f_i(Y_s)$ and
\begin{align}
\inner{[\tau_1 \dots \tau_n]_i,\bigy_s} &= \inner{\tau_1 \dots \tau_n,f_i(\bigy_s)} \notag\\
&= \sum_{\sigma \in \sym(n)}  \frac{D^n f_i(Y_s)}{n!} : \big(\inner{\tau_{\sigma(1)},\bigy_s},\dots,\inner{\tau_{\sigma(n)},\bigy_s}\big)\notag \\ 
&= D^n f_i(Y_s) : \big(\inner{\tau_1,\bigy_s},\dots,\inner{\tau_n,\bigy_s}\big)\label{e:fix_condition} \;,
\end{align}
where we sum over all permutations $\sigma$ of $\{1,\dots,n\}$. It follows that we can always write the coefficients as $\inner{\tau,\bigy_t} = f_\tau(Y_t)$ where $f_\tau : \reals^e \to \reals^e$ is some smooth function determined by $f$ and its derivatives. For instance, 
\begin{equ}
\inner{\mytreetwoone{j}{k}{i},\bigy_s} = D^2 f_i(Y_s):\big(\inner{\bullet_j,\bigy_s},\inner{\bullet_k,\bigy_s}\big) = D^2f_i:\big(f_j,f_k\big)(Y_s)\;.
\end{equ}
In the sequel, we will always reserve $\{f_\tau\}_{\tau \in \trees^*}$ for the family of functions satisfying the recurrence 
\begin{equ}\label{e:recurrence}
f_{[\tau_1 \dots \tau_n]_i} = D^n f_i : (f_{\tau_1},\dots,f_{\tau_n})\;, 
\end{equ}
for some specified $\{f_{\bullet_i}\}_{i=1\dots d}$. We also extend the family to any $h\in\hopf^*$ by 
\begin{equ}\label{e:fh_def}
f_{h} \defin \inner{h,1}\Id +  \sum_{\tau\in\trees} \inner{h,\tau} f_{\tau}\;, 
\end{equ}
where $\Id : \reals^e \to \reals^e$ is the identity map. It also follows that 
\begin{equ}
f_{\tau_1 \dots \tau_n} = 0 \;,
\end{equ}
for all non-trivial products $\tau_1 \dots\tau_n \in \forest_N^* \setminus \trees_N^*$. Moreover, if $f_{\bullet_i} = f_i$, then we have that $f_h(Y_t) = \inner{h,\bigy_t}$ for all $h\in\hopf_N^*$. 
\begin{rmk}

There is a conflict of notation here, if $h=1$ is the co-unit then from \eqref{e:fh_def} we have $f_1 = \Id$, so that $f_1(Y_t) = \inner{1,\bigy_t}$. This is not to be confused with the vector field $f_1$ in the original RDE. Since never actually refer to $f_1$ for the co-unit $1$, the reader should not be concerned. Indeed, it is simply included to make the definition \eqref{e:fh_def} consistent.
\end{rmk}

By exploiting some algebraic properties of the coefficients $f_\tau$, we can obtain an explicit formula for  $\inner{1,\bigy}=Y$. In the following proposition we define a controlled rough path $\bigy : [0,T] \to (\hopf_{N})^e$, with an extra layer of components $\inner{\tau_1\dots\tau_n,\bigy}$ for $|\tau_1 \dots \tau_n|=N$, these extra components serve no purpose other than to facilitate the definition of $\inner{1,\bigy}$. It is not hard to see that these extra components become important when considering the fixed point equation $\bigy = \fixmap \bigy$. 
\begin{prop}\label{prop:euler}
$\bigy : [0,T] \to (\hopf_N)^e$ with $\inner{1,\bigy}=Y$ is the unique controlled rough path solution to \eqref{e:control_fixpoint} if and only if
\begin{equ}\label{e:deltaY}
\delta Y_{st} = \sum_{\tau \in \trees_N} f_\tau(Y_s)\inner{\X_{st},\tau} + r_{st}
\end{equ}
where $|r_{st}| = \liloh(|t-s|)$ and the coefficients of $\bigy$ are given by $\inner{\tau_1 \dots \tau_n,\bigy_t} = f_{\tau_1 \dots \tau_n}(Y_t)$ for all $\tau_1 \dots \tau_n\in\forest_N^*$, with $f_{\bullet_i}=f_i$  for $i=1\dots d$. 
\end{prop}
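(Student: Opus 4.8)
The coefficient part of the statement is essentially forced and has already been extracted above: writing out the fixed point equation $\bigy=\int f(\bigy)\cdot dX$ at the level of the defining formulas for the integral, and combining the composition formula \eqref{e:control_taylor} with the recursion \eqref{e:recurrence} (together with the fact that $f_{\tau_1\dots\tau_n}=0$ for every nontrivial product), one finds that any controlled rough path solving \eqref{e:control_fixpoint} must have $\inner{\tau_1\dots\tau_n,\bigy_t}=f_{\tau_1\dots\tau_n}(Y_t)$, which is \eqref{e:fix_condition}. So the genuine content is the equivalence between \eqref{e:deltaY} and the fixed point property once this coefficient form is assumed, and I would prove the two implications separately, using the existence and uniqueness of RDE solutions recalled above to pass from ``$\bigy$ solves \eqref{e:control_fixpoint}'' to ``$\bigy$ is \emph{the} solution''.

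For the forward direction, expand $\bigy=\int f(\bigy)\cdot dX$ through the sewing lemma: by the very construction of the integral, $\delta Y_{st}=\sum_{i=1}^{d}\sum_{h\in\forest_{N-1}^0}\inner{h,f_i(\bigy)_s}\inner{\X_{st},[h]_i}+\liloh(|t-s|)$. Now $\inner{h,f_i(\bigy)_s}=f_{[h]_i}(Y_s)$ by the composition formula (again only the single-tree decompositions survive, and the symmetry factors combine to give \eqref{e:recurrence}), while $(i,h)\mapsto[h]_i$ is a bijection from $\{1,\dots,d\}\times\forest_{N-1}^0$ onto $\trees_N$ (remove, respectively reattach, the root). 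Substituting and relabelling gives exactly \eqref{e:deltaY} with $r_{st}=\liloh(|t-s|)$.

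For the converse, assume $\bigy$ carries the stated coefficients and $Y$ satisfies \eqref{e:deltaY}. The first task is to check that $\bigy$ is genuinely an $\X$-controlled rough path, i.e.\ that it satisfies \eqref{e:consistency}. For $h$ a nontrivial product this is immediate: by Remark~\ref{rmk:cuts} the right-hand factor of any admissible cut of a tree is again a tree, so all potential contributions to $\inner{\X_{st}\star h,\bigy_s}$ vanish. For $h=\tau$ a single tree, Taylor-expand $f_\tau(Y_t)$ about $Y_s$, substitute \eqref{e:deltaY} for $\delta Y_{st}$, use the multiplicativity of $\X$ (Condition~1) to replace $\inner{\X_{st},\rho_1}\cdots\inner{\X_{st},\rho_m}$ by $\inner{\X_{st},\rho_1\cdots\rho_m}$, and absorb into $R^\tau_{st}$ every term of homogeneity larger than $(N-|\tau|)\gamma$ using the analytic bound on $\X$. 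Matching the coefficient of each $\inner{\X_{st},g}$ with the corresponding term of $\inner{\X_{st}\star\tau,\bigy_s}$ then reduces to a purely combinatorial identity: grafting the forest $g$ onto $\tau$ in all admissible ways and summing the elementary differentials reproduces $\tfrac{1}{m!}\sum_{\rho_1\cdots\rho_m=g}D^mf_\tau:(f_{\rho_1},\dots,f_{\rho_m})$, i.e.\ the compatibility of the family $\{f_\tau\}$ with the Connes--Kreimer coproduct (the ``composition of B-series'' law), which one proves by induction on $|\tau|$ directly from \eqref{e:recurrence} and \eqref{e:recursive_coproduct}. This is the one genuinely nontrivial step.

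Once $\bigy$ is known to be a controlled rough path, $f(\bigy)$ and hence $\fixmap\bigy=\int f(\bigy)\cdot dX$ are controlled rough paths by the stability results recalled above, and the computation of the forward direction shows that the level-one component $W$ of $\fixmap\bigy$ satisfies $\delta W_{st}=\sum_{\tau\in\trees_N}f_\tau(Y_s)\inner{\X_{st},\tau}+\liloh(|t-s|)$ --- note that the coefficients appearing here are built from $Y$, not $W$. Thus $W$ and $Y$ have the same increment up to $\liloh(|t-s|)$, whence $W=Y$ by the uniqueness part of the sewing lemma, while the higher coefficients of $\fixmap\bigy$ are again $f_{\tau_1\dots\tau_n}(Y)$; therefore $\fixmap\bigy=\bigy$ and $\bigy$ solves \eqref{e:control_fixpoint}. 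The main obstacle is the combinatorial identity of the third paragraph; the remainder is bookkeeping with the sewing lemma and with the stability of controlled rough paths under composition and integration.
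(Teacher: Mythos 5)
Your proposal is correct and takes essentially the same approach as the paper: the genuinely nontrivial step you isolate (showing that $\bigy$ with coefficients $f_{\tau_1\dots\tau_n}(Y)$ satisfies the consistency condition \eqref{e:consistency}, via a Taylor expansion of $f_\tau(Y_t)$, substitution of \eqref{e:deltaY}, multiplicativity of $\X$, and a combinatorial matching of admissible cuts with the elementary-differential recursion) is precisely the content of the paper's Lemma~\ref{lem:Y_control}, and the two directions of the equivalence are then dispatched exactly as you describe, using the explicit formula for $\inner{1,\int f(\bigy)\cdot dX}$, the bijection $(i,h)\mapsto[h]_i$, and the fact that an increment which is $\liloh(|t-s|)$ must vanish.
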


\begin{rmk}\label{rmk:redundancy}
This proposition is particularly useful when one wants to understand how the solution depends explicitly on the vector fields and the rough path. In particular, it can be used to easily show how introducing redundancies in the vector fields $\{f_i\}$ leads to the solution only depending on certain components of the rough path $\X$. These types of results have been discussed in \cite{lyons98}, in the context of geometric rough paths.  
\end{rmk}

In order to show that $\bigy$ constructed by \eqref{e:deltaY} with $\inner{\tau_1 \dots \tau_n,\bigy_t}=f_{\tau_1 \dots \tau_n}(Y_t)$ is a solution, we must first show that it is a controlled rough path.

\begin{lemma}\label{lem:Y_control}
Suppose $\bigy : [0,T] \to (\hopf_N)^{e}$ with $\inner{1,\bigy}=Y$ satisfies \eqref{e:deltaY} and $\inner{\tau_1 \dots \tau_n,\bigy_t} = f_{\tau_1 \dots \tau_n}(Y_t)$ for all $\tau_1 \dots \tau_n \in \forest_N^*$ with $f_{\bullet_i} = f_i$. Then $\bigy$ is an $\X$-controlled rough path.
\end{lemma}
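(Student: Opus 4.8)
The goal is to verify that the candidate path $\bigy:[0,T]\to(\hopf_N)^e$, defined by $\inner{1,\bigy_t}=Y_t$, $\inner{\tau_1\dots\tau_n,\bigy_t}=f_{\tau_1\dots\tau_n}(Y_t)$ and satisfying \eqref{e:deltaY}, fulfils the defining estimate \eqref{e:consistency} of an $\X$-controlled rough path, i.e. that for each $h\in\forest_N^0$ one has $\inner{h,\bigy_t}=\inner{\X_{st}\star h,\bigy_s}+R^h_{st}$ with $|R^h_{st}|\le C|t-s|^{(N-|h|)\gamma}$. The plan is to reduce everything to the single analytic input already available --- the $\liloh(|t-s|)$ bound on $r_{st}$ in \eqref{e:deltaY}, together with the $\gamma$-H\"older bound \eqref{e:Gnorm} on the components of $\X$ --- and to treat the rest as bookkeeping about the coefficients $f_\tau$ and the coproduct $\Delta$.

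First I would record the key algebraic fact: the family $\{f_\tau\}$, extended to $\hopf^*$ by \eqref{e:fh_def}, behaves multiplicatively, in the sense that $f_{h_1}$ and $f_{h_2}$ compose through the Butcher-series structure so that a Taylor expansion of $f_h(Y_t)$ around $Y_s$ can be re-summed using the coproduct $\Delta$. Concretely, one applies the $N$-th order Taylor expansion to $f_h(Y_t)-f_h(Y_s)$, substitutes $\delta Y_{st}=\sum_{\sigma\in\trees_N}f_\sigma(Y_s)\inner{\X_{st},\sigma}+r_{st}$ from \eqref{e:deltaY} into each slot of each derivative $D^mf_h(Y_s)$, and collects terms. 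The recurrence \eqref{e:recurrence} together with the combinatorial description of $\Delta$ from Remark~\ref{rmk:cuts} (each admissible cut $\tau^{(1)}\otimestilde\tau^{(2)}$ satisfies $|\tau^{(1)}|+|\tau^{(2)}|=|\tau|$) is exactly what makes the collected sum equal to $\inner{\X_{st}\star h,\bigy_s}=\sum_{(h)}\inner{\X_{st},h^{(1)}}f_{h^{(2)}}(Y_s)$. This is the step where one uses that $\psi$-type identities propagate: the point is that $f$ applied to a product of trees vanishes (as noted after \eqref{e:recurrence}), so only the tree-indexed terms survive and match the action of $\star$ on $\bigz$. For $h$ a forest rather than a tree one either reduces to the tree case by multiplicativity of $f_{(\cdot)}$ and of $\delta$, or notes $f_h=0$ and checks the identity trivially.

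The remainder estimate is then obtained by degree counting. Each substitution of \eqref{e:deltaY} into a slot of an $m$-th derivative contributes either a factor $\inner{\X_{st},\sigma}$, which by \eqref{e:Gnorm} is $O(|t-s|^{\gamma|\sigma|})$, or a factor $r_{st}=\liloh(|t-s|)$; the Taylor remainder of order $N-|h|$ contributes $O(|t-s|^{(N-|h|)\gamma})$ after one substitutes the H\"older bound on $\delta Y$. Terms that exactly reconstitute $\inner{\X_{st}\star h,\bigy_s}$ have total degree $\le N-|h|$ and are absorbed into that main term; every leftover term has a strictly larger total power of $|t-s|$, or carries a $\liloh$ factor, hence is $O(|t-s|^{(N-|h|)\gamma})$ and goes into $R^h_{st}$.

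I expect the main obstacle to be the second step: making precise the claim that Taylor-expanding $f_h(Y_t)$ and re-substituting \eqref{e:deltaY} reproduces exactly $\sum_{(h)}\inner{\X_{st},h^{(1)}}f_{h^{(2)}}(Y_s)$, with no missing or spurious terms. This is a purely combinatorial identity relating iterated derivatives of composite vector fields (Fa\`a di Bruno / Butcher-series bookkeeping) to the Connes--Kreimer coproduct, and the bookkeeping --- tracking which admissible cut each Taylor term corresponds to, and handling the symmetry factors $1/m!$ against the unordered-forest conventions --- is where the real work lies. Everything else (linearity, the sewing-type degree count, the vanishing of $f$ on nontrivial products) is routine given the results already established, in particular the canonical-extension identity \eqref{e:rmk_chen} and Proposition~\ref{prop:euler}.
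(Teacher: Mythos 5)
Your overall strategy --- Taylor-expand the coefficient $f_h(Y_t)$ around $Y_s$, substitute the controlled expansion \eqref{e:deltaY} of $\delta Y_{st}$, collect terms against the coproduct, and degree-count the remainder --- is the same as the paper's, but you have written the wrong target for the matching step, and this is a genuine error. The consistency relation \eqref{e:consistency} unpacks as
\begin{equ}
\innerprod{\X_{st}\star h,\bigy_s}=\sum_{\rho\in\trees_N^0}f_\rho(Y_s)\sum_{(\rho)}\innerprod{\X_{st},\rho^{(1)}}\innerprod{h,\rho^{(2)}}\,,
\end{equ}
a sum over trees $\rho$ indexing $\bigy_s$ and over the admissible cuts of \emph{each such} $\rho$ (this is exactly the form the paper itself uses in its proof). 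It is not equal to $\sum_{(h)}\innerprod{\X_{st},h^{(1)}}f_{h^{(2)}}(Y_s)$ as you assert: the coproduct must act on $\rho$, not on $h$. Already at $h=1$ your expression collapses to $\innerprod{\X_{st},1}f_1(Y_s)=Y_s$, so the consistency relation would read $\delta Y_{st}=R^1_{st}$ with $|R^1_{st}|\le C|t-s|^{N\gamma}$, which is false for a generic $\gamma$-H\"older $Y$; the correct expression instead reproduces $\sum_{\tau\in\trees_N}f_\tau(Y_s)\innerprod{\X_{st},\tau}$, as it must for consistency with \eqref{e:deltaY}. The formula you wrote looks like Chen's relation for $\X$ rather than the controlled-path relation for $\bigy$; if the Fa\`a di Bruno bookkeeping is aimed at it, the matching fails from degree one onward.

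With the target corrected, the remainder of your plan is viable but heavier than the paper's route, in a way you may not have noticed. Resumming a full Taylor/Fa\`a di Bruno expansion of the composite $f_h(Y_t)$ essentially requires $D^m f_h:(f_{\sigma_1},\dots,f_{\sigma_m})=f_{(\sigma_1\cdots\sigma_m)\star h}$, which is Lemma~\ref{lem:L_GL}, proved later in the paper by its own lengthy induction. The paper's proof of Lemma~\ref{lem:Y_control} inducts on tree depth instead: it Taylor-expands only the outer factor $D^n f_i(Y_t)$ around $Y_s$, leaves the arguments $f_{\tau_j}(Y_t)$ intact, and substitutes the inductive consistency relation $f_{\tau_j}(Y_t)=\sum_\sigma f_\sigma(Y_s)\innerprod{\X_{st}\star\tau_j,\sigma}+R^{\tau_j}_{st}$ into each slot, which sidesteps the full Fa\`a di Bruno and keeps the argument self-contained. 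Finally, the reduction to trees is correct, but the mechanism is a little more than ``$f_h=0$'': since the right leg $\rho^{(2)}$ of any admissible cut of a tree is itself a single tree (Remark~\ref{rmk:cuts}), one has $\innerprod{h,\rho^{(2)}}=0$ whenever $h$ is a non-trivial product, so $\innerprod{\X_{st}\star h,\bigy_s}$ vanishes identically and $R^h_{st}\equiv 0$.
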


\begin{proof}[Proof of Lemma \ref{lem:Y_control}]
We must check the consistency condition \eqref{e:consistency} to ensure that $\bigy$ is a controlled rough path. The assumption \eqref{e:deltaY} ensures the condition holds for $h=1$, so it is sufficient to prove the condition for all $\tau \in \trees_{N}^*$, since the coefficients vanish on non-trivial products. We will assume the consistency condition holds for all of $\trees_k^*$ and prove the condition for $\tau = [\tau_1 \dots \tau_n]_i$ where $n\geq 0$ and $\tau_i \in \trees_k^*$. We have that 
\begin{align*}
\inner{[\tau_1 \dots \tau_n]_i,\bigy_t} &- \inner{\X_{st}\star [\tau_1 \dots \tau_n]_i, \bigy_s }\\
&= D^n f_i(Y_t):\big(\inner{\tau_1,\bigy_t},\dots,\inner{\tau_n,\bigy_t} \big) - \sum_{\rho \in \forest^0_{N}} \inner{\Y_{s},\rho} \inner{\X_{st}\otimest [\tau_1 \dots \tau_n]_i , \Delta \rho} \\
&=D^n f_i : \big(f_{\tau_1},\dots,f_{\tau_n}\big)(Y_t) - \sum_{\rho \in \trees_N^0} f_{\rho}(Y_s) \inner{\X_{st}\otimest [\tau_1 \dots \tau_n]_i,\Delta \rho}\;.
\end{align*}
Now, by a Taylor expansion on $D^n f_i$, we obtain
\begin{align}
\label{e:prove_control1}D^n f_i(Y_t)&:\big(f_{\tau_1},\dots, f_{\tau_n} \big) (Y_t)\\
 &= \sum_{m=n}^{N}\frac{1}{(m-n)!} D^m f_i(Y_s): \big(f_{\tau_1}(Y_t),\dots,f_{\tau_n}(Y_t),\delta Y_{st},\dots,\delta Y_{st}  \big) + R^f_{st}\notag\;,
\end{align}
where the term $\delta Y_{st}$ appears $m-n$ times and 
\begin{equ}
|R^f_{st}| \leq C|\delta Y_{st}|^{N-m} \leq C |t-s|^{(N-n)\gamma}\;.
\end{equ}
Now, by the inductive hypothesis we have that
\begin{equ}
f_{\tau_j}(Y_t) = \sum_{\sigma_j \in \trees^0_{N}} f_{\sigma_j}(Y_s)\inner{\X_{st}\star \tau_j,\sigma_j} + R_{st}^{\tau_j} \;,
\end{equ}
where $|R_{st}^{\tau_j}| \leq C|t-s|^{(N-|\tau_j|)\gamma}$ and by assumption we have that
\begin{equ}
 \delta Y_{st} = \sum_{\lambda_j \in \trees_N} f_{\lambda_j}(Y_s)\inner{\X_{st},\lambda_j} + r^j_{st}\;, 
\end{equ}
where $|r^j_{st}| = \liloh(|t-s|)$. If we substitute these into \eqref{e:prove_control1}, we obtain
\begin{align}
\sum_{m=n}^{N}&\frac{1}{(m-n)!} D^m f_i(Y_s): \big(f_{\sigma_1}(Y_s),\dots,f_{\sigma_n}(Y_s),f_{\lambda_1}(Y_s),\dots,f_{\lambda_{m-n}}(Y_s) \big)\notag \\ 
&\times \inner{\X_{st}\star \tau_1,\sigma_1} \dots \inner{\X_{st}\star \tau_n,\sigma_n} \inner{\X_{st},\lambda_1\dots\lambda_{m-n}} + R^\tau_{st} \label{e:prove_control2}\;,
\end{align}
where we sum over all $\sigma_j \in \trees_N$ (since $\sigma_j=1$ vanishes) and $\lambda_j \in \trees_N$ and where $R^\tau_{st}$ is the sum of all terms that contain at least one factor from the set $\{R_{st}^{\tau_1},\dots,R_{st}^{\tau_n},R_{st}^{f},r^1_{st},\dots,r^{m-n}_{st} \}$. Hence, 
\begin{equ}\label{e:R_estimate}
|R^\tau_{st}| \leq C \max_{1\leq j\leq n}\left( |t-s|^{(N-|\tau_j|)\gamma}\right) + C|t-s|^{(N-n)\gamma} \leq C|t-s|^{(N-|[\tau_1 \dots \tau_n]_i|)\gamma} \;,
\end{equ} 
where the bound on the second term follows from the fact that $n \leq |\tau_1| + \dots +|\tau_n| \leq |[\tau_1 \dots \tau_n]_i|$. On the other hand, we have that
\begin{align*}
\sum_{\rho \in \trees_{N}^0}& f_{\rho}(Y_s) \inner{\X_{st}\otimest [\tau_1 \dots \tau_n]_i , \Delta \rho}\\ 
&=\sum_{m=n}^{N-1} \sum_{\rho_1,\dots, \rho_m} \frac{1}{m!}f_{[\rho_1 \dots \rho_m]_i}(Y_s)\inner{\X_{st}\otimest [\tau_1 \dots \tau_n]_i, \Delta [\rho_1 \dots \rho_m]_i} 
\end{align*}
where we sum over $\rho_i \in \trees_N$ with $|\rho_1|+\dots+|\rho_m| \leq N-1$, since only those $\rho\in \trees$ with $\rho = [\rho_1 \dots \rho_m]_i$ for $m\geq n$ will not vanish. Note that the factor of $1/m!$ appears since all rearrangements of the $\rho_i$ in $[\rho_1 \dots \rho_m]$ produce the same $\rho$. Using the recurrence \eqref{e:recurrence}, this expands to
\begin{align*}
\sum_{m=n}^{N-1} \sum_{\rho_1,\dots, \rho_m}  \frac{1}{m!} D^m f_i(Y_s) : \big(f_{\rho_1}(Y_s),\dots,f_{\rho_m}(Y_s)\big) \inner{\X_{st}\otimest [\tau_1 \dots \tau_n]_i, \Delta [\rho_1 \dots \rho_m]_i}\;. 
\end{align*}
But we also have that
\begin{align*}
\inner{\X_{st}\otimest [\tau_1 \dots \tau_n]_i, \Delta [\rho_1 \dots \rho_m]_i} = \inner{\X_{st},\rho_1^{(1)}\dots \rho_m^{(1)}}\inner{\tau_1 \dots \tau_n,\rho_1^{(2)}\dots \rho_m^{(2)}}\;.
\end{align*}
It follows that
\begin{align}
&\sum_{\rho_1,\dots, \rho_m}  \frac{1}{m!} D^m f_i(Y_s) : \big(f_{\rho_1}(Y_s),\dots,f_{\rho_m}(Y_s)\big) \inner{\X_{st}\otimest [\tau_1 \dots \tau_n]_i, \Delta [\rho_1 \dots \rho_m]_i}\notag\\
&= \sum_{\rho_1,\dots, \rho_m}\frac{1}{m!} D^m f_i(Y_s) : \big(f_{\rho_1}(Y_s),\dots,f_{\rho_m}(Y_s)\big)\notag\\
&\qquad\qquad\qquad\qquad\times\inner{\X_{st},\rho_1^{(1)}\dots \rho_m^{(1)}}\inner{\tau_1 \dots \tau_n,\rho_1^{(2)}\dots \rho_m^{(2)}}\notag\\
&=\sum_{\rho_1,\dots, \rho_m}\frac{1}{m!} D^m f_i(Y_s) : \big(f_{\rho_1}(Y_s),\dots,f_{\rho_m}(Y_s)\big)\notag\\
&\qquad\qquad\qquad\qquad\times \binom{m}{n} \inner{\X_{st},\rho_1^{(1)}\dots \rho_n^{(1)} \rho_{n+1} \dots \rho_m}\inner{\tau_1 \dots \tau_n,\rho_1^{(2)}\dots \rho_n^{(2)}}\;.\label{e:prove_control3}
\end{align}
In the last equality we have used the fact that each term in $\inner{\tau_1 \dots \tau_n, \rho_1^{(2)}\dots \rho_m^{(2)}}$ will vanish unless $\rho_j^{(2)}=1$ for some choice of $m-n$ terms from the product $\rho_1^{(2)}\dots \rho_m^{(2)}$. But since the function $D^m f_i(Y_s) : \big(f_{\rho_1},\dots,f_{\rho_m}\big)$ is symmetric in $\rho_1,\dots,\rho_m$ and we are summing over all $\rho_1,\dots,\rho_m$, we can assume without loss of generality that $\rho_j^{(2)}=1$ for $n+1 \leq j \leq m$, provided we include the combinatorial factor $\binom{m}{n}$. Of course, it follows that for each $n+1 \leq j \leq m$, the only term remaining from the sum $\rho_{j}^{(1)}\otimest\rho_j^{(2)}$ is $\rho_j\otimest1$. 
\par
Now, since $\inner{\tau_1 \dots \tau_n,\rho_1^{(2)}\dots \rho_n^{(2)}} =1$ if and only if $\inner{\tau_j,\rho_{i_j}^{(2)}}=1$ for any permutation $(i_1,\dots,i_n)$ of $\{1,\dots,n\}$ and every $j=1\dots n$, we can write \eqref{e:prove_control3} as 
\begin{align}
&\sum_{\rho_1,\dots, \rho_m}\frac{1}{m!} D^m f_i  : \big(f_{\rho_1},\dots,f_{\rho_m}\big) (Y_s)\notag\\
&\qquad\qquad\qquad\qquad\times \binom{m}{n} \inner{\X_{st},\rho_1^{(1)}\dots \rho_n^{(1)} \rho_{n+1} \dots \rho_m} \sum_{(i_1,\dots,i_n)} \inner{\tau_1,\rho_{i_1}^{(2)}}\dots \inner{\tau_n,\rho_{i_n}^{(2)}}\notag\\
&=\sum_{\rho_1,\dots, \rho_m}\frac{n!}{m!} D^m f_i : \big(f_{\rho_1},\dots,f_{\rho_m}(Y_s)\big)\notag\\
&\qquad\qquad\qquad\qquad\times \binom{m}{n} \inner{\X_{st},\rho_1^{(1)}\dots \rho_n^{(1)} \rho_{n+1} \dots \rho_m}\inner{\tau_1,\rho_{1}^{(2)}}\dots \inner{\tau_n,\rho_{n}^{(2)}}\label{e:prove_control4}\;,
\end{align}
where in the first line we sum over all permutations $(i_1,\dots,i_n)$. If we set 
\begin{equ}
(\rho_1,\dots,\rho_m) = (\sigma_1,\dots,\sigma_n,\lambda_1,\dots,\lambda_{m-n})\;,
\end{equ}
then by comparing \eqref{e:prove_control2} with \eqref{e:prove_control4} and using the fact that $\frac{n!}{m!}\binom{m}{n} = \frac{1}{(m-n)!}$ we see that 
\begin{equ}
\inner{[\tau_1 \dots \tau_n]_i,\bigy_t} - \inner{\X_{st}\star [\tau_1 \dots \tau_n]_i, \bigy_s } = R^\tau_{st}\;,
\end{equ} 
and the estimate \eqref{e:R_estimate} proves the result. 
\end{proof}

\begin{proof}[Proof of Proposition \ref{prop:euler}]
We will first prove the `if' statement. Define $\bigy$ as in Lemma \ref{lem:Y_control}, then $\bigy$ is indeed an $\X$-controlled rough path. Now, from \eqref{e:fix_condition}, we have that $\inner{\tau,\bigy_t} = \inner{\tau,(\fixmap\bigy)_t}$ for all $\tau \in \trees_N^*$, so to show $\bigy$ is the unique solution, it suffices to show that 
\begin{equ}
\delta Y_{st} = \innerbig{1,\int_s^t f(\bigy_r) \cdot dX_r}\;.
\end{equ}
By the definition, we have that
\begin{align}\label{e:pathproof1}
&\innerbig{1,\int_s^t f(\bigy_r)\cdot dX_r}\\ 
\notag &\qquad= \sum_{i=1}^d \sum_{\tau_1 \dots \tau_n \in\forest_{N-1}} D^n f_i(Y_s):(\inner{\tau_1,\bigy_s},\dots,\inner{\tau_n,\bigy_s}) \innerprod{\X_{st},[\tau_1 \dots \tau_n]_i} + \rtilde_{st}\\
&\qquad= \sum_{i=1}^d \sum_{\tau_1 \dots \tau_n \in\forest_{N-1}}D^n f_i:(f_{\tau_1},\dots,f_{\tau_n})(Y_s) \innerprod{\X_{st},[\tau_1 \dots \tau_n]_i} + \rtilde_{st}\notag\\
&\qquad=\sum_{i=1}^d \sum_{\tau_1 \dots \tau_n \in\forest_{N-1}}f_{[\tau_1 \dots \tau_n]_i}(Y_s)\inner{\X_{st},[\tau_1 \dots \tau_n]_i} + \rtilde_{st}\notag\;,
\end{align}
where $|\rtilde_{st}| = \liloh(|t-s|)$. However, by setting $\tau = [\tau_1 \dots \tau_n]_i$, we can rewrite the sum 
\begin{equ}
\sum_{\tau\in \trees_N}  =  \sum_{i=1}^d \sum_{\tau_1 \dots \tau_n \in\forest_{N-1}}\;.
\end{equ}
We obtain
\begin{equ}\label{e:pathproof2}
\innerbig{1,\int_s^t f(\bigy_r)\cdot dX_r} = \sum_{\tau \in \trees_N} f_{\tau}(Y_s)\innerprod{\X_{st},\tau} + \rtilde_{st}\;. 
\end{equ}
But from \eqref{e:deltaY}, it follows that 
\begin{equ}
\innerbig{1,\int_s^t f(\bigy_r)\cdot dX_r}  - \delta Y_{st} =  \rtilde_{st}-r_{st}\;,
\end{equ}
and since the left hand side is an increment and the right hand side is $\liloh(|t-s|)$, the left hand side must be identically zero. 
\par
For the `only if' statement, suppose $\bigy$ is the unique solution to \eqref{e:control_fixpoint} with $\inner{1,\bigy}=Y$. Then since $\inner{[\tau_1 \dots \tau_n]_i,\bigy_t} = \inner{[\tau_1 \dots \tau_n]_i,(\fixmap\bigy)_t}$, it follows from \eqref{e:fix_condition} (and the preceding argument) that $\inner{\tau_1 \dots \tau_n,\bigy_t} = f_{\tau_1 \dots \tau_n}(Y_t)$ for all $\tau_1 \dots \tau_n \in \forest_N^*$ with $f_{\bullet_i} = f_i$ for $i=1\dots d$. Note that \eqref{e:pathproof2} also holds for the solution $\bigy$, since the identity only relies on the coefficients $\inner{\tau,\bigy}$ satisfying \eqref{e:recurrence}. It follows that
\begin{align*}
\delta Y_{st} &= \innerbig{1,\int_s^t f(\bigy_r)\cdot dX_r} \\
&= \sum_{\tau \in \trees_N} f_{\tau}(Y_s)\inner{\X_{st},\tau} + \rtilde_{st}\;,
\end{align*}   
This proves \eqref{e:deltaY} and hence completes the proof. 
\end{proof}
 \begin{example}\label{eg:linear}
Let us consider the RDE with linear vector fields,
\begin{equ}\label{e:linearRDE}
\delta Y_{st} = \sum_{i=1}^d \int_s^t V_i Y_r dX^i_r \;,
\end{equ}
where $V_i \in L(\reals^e,\reals^e)$. Since the vector fields are smooth, the solution $Y$ must take the form \eqref{e:deltaY}, where $Y =\inner{1,\bigy}$ and the coefficients satisfy $\inner{[\tau]_i,\bigy_s} = V_i \inner{\tau,\bigy_s}$ for any $[\tau]_i \in \trees_N$ and $\inner{\tau_1 \dots \tau_m,\bigy_s} = 0$ for any non-trivial product of $\tau_i \in \trees$. Hence, we have that
\begin{equ}
\delta Y_{st} = \sum_{n=1}^N \sum_{a} (V_{a_n} \dots V_{a_1}Y_s) \inner{\X_{st},\tau^{a_n \dots a_1 }} + r_{st}\;,
\end{equ}
where we sum over all vectors $a= (a_1,\dots,a_n) \in \{1,\dots,d \}^n$ and we use the shorthand for linear trees $\tau^{a_n \dots a_1} = [\tau^{a_n\dots a_{2}}]_{a_1}$. If we use the injection map $\iota : T(\reals^d) \to \hopf$ defined in Remark \ref{rmk:tensor_identify}, then we have
\begin{equ}
\delta Y_{st} = \sum_{n=1}^N \sum_{a} (V_{a_n} \dots V_{a_1}Y_s) \inner{\X_{st}, \iota \big(e_{a_n} \otimes \dots \otimes e_{ a_1 }\big) } + r_{st}\;,
\end{equ}
which coincides with the standard Davie solution \cite{davie07,friz10}, defined in the case of a geometric rough path. In our case, the ``branched'' components only influence the solution through terms involving second order derivatives of the vector field, which always vanish. This is a good example of how to use Proposition \ref{prop:euler} to show which components of the rough paths actually count towards the solution, as mentioned in Remark \ref{rmk:redundancy}.   
\end{example}

\section{Geometric rough paths}\label{sec:geometric}
Let $V$ be some real Banach space. Let $T(V) = \bigoplus_{i=0}^\infty V^{\otimes i}$ be the tensor product algebra of $V$, with the convention $V^{\otimes 0} = \reals$. We will call $T^{(n)}(V) = \bigoplus_{i=0}^n V^{\otimes i}$ the \emph{step-$n$ truncated tensor algebra}. The vector space $T(V)$ can be viewed as a Hopf algebra, by adding the \emph{shuffle} product $\shuffle$ and the \emph{deconcatenation} coproduct $\Deltabar$. The existence of an antipode for this bialgebra is guaranteed by the fact that it is graded, with zeroth grade equal to $\reals$.  The shuffle product is defined in the following way, let $e_a = e_{a_1}\otimes \dots \otimes e_{a_n}$ and $e_b = e_{b_1}\otimes\dots\otimes e_{b_m}$ then 
\begin{equ}
e_a \shuffle e_b = \sum_{c \in \shufset(a,b)} e_c\;,
\end{equ}
where $c \in  \shufset(a,b)$ if and only if $c$ is a permutation of the index sequence $(a,b) = (a_1,\dots,a_n,b_1,\dots,b_m)$ which preserves the original ordering of the index sequences $a$ and $b$ respectively. The coproduct $\Deltabar$ is defined by
\begin{equ}    
\Deltabar e_c = \sum_{(a,b)=c} e_a\otimestilde e_b\;.
\end{equ}
The dual Hopf algebra $T((V))$ is the space of formal series of tensors, equipped with the concatenation product $\otimes$ and the coproduct $\deltabar$, that are dual to $\Deltabar$ and $\shuffle$ respectively. We likewise have $T((V)) = \bigoplus_{i=0}^\infty (V^*)^{\otimes i}$ (allowing for formal series) and the truncation, $T^{(n)}((V))$ which can clearly be identified with $T^{(n)}(V)$. More details on the above construction can be found, for instance, in \cite{reutenauer93}. 
\par
We define a Lie bracket on $T((V))$ using the commutator 
\begin{equ}[]
[x,y]_\otimes = x\otimes y - y \otimes x\;,
\end{equ}
for any $x,y \in T((V))$. Define the sequence of vector spaces $W_i(V)$ by $W_1(V)=V$ and $W_{i+1}(V) = [V,W_i(V)]_\otimes$ for $i\geq 1$. We call $S$ a \emph{formal Lie series} if $S$ can be written as a formal sum
\begin{equ}
S = \sum_{i\geq 1} S_i\;,
\end{equ}
where each $S_i \in W_i(V)$ is some Lie polynomial. We will denote the vector space of formal Lie series by $\G(V)$, clearly $\G(V)$ is a subspace of $T((V))$. We similarly denote the \emph{step-$n$ free Lie algebra} by 
\begin{equ}
\G^{(n)}(V) = \bigoplus_{i=1}^n W_i(V) \;,
\end{equ}
which is the level-$n$ truncation of $\G(V)$. We define the group 
\begin{equ}
G(V) = \exp (\G(V))\;,
\end{equ}
as the image of $\G(V)$ under the exponential map, defined by
\begin{equ}
\exp(x) = \sum_{k\geq0} \frac{x^{\otimes k}}{k!}\;.
\end{equ}
We similarly define the \emph{step-$n$ free nilpotent group} as
\begin{equ}
G^{(n)}(V)= \exp(\G^{(n)}(V))\;.
\end{equ}
Clearly, we have that $G^{(n)}(V) \subset T^{(n)}(V)$. Moreover, it is well known that $G(V)$ coincides with the group-like objects, in that $g \in G(V)$ if and only if $\deltabar g = g\otimestilde g$, the proof of this statement can be found in \cite{reutenauer93}. It follows that $G^{(n)}(V)$ is the step-$n$ truncation of the group-like objects. Since $\deltabar$ is dual to $\shuffle$, this group-like property can be equivalently stated as 
\begin{equ}\label{e:geoshuffle}
 \innerprod{g,x}\innerprod{g,y} = \innerprod{g,x\shuffle y}\;,
\end{equ} 
for every $g\in G(V)$ and $x,y\in T(V)$. 
\par
The group $G^{(n)}(V)$ can be equipped with a \emph{subadditive homogeneous norm} $\norm{\cdot}_{G^{(n)}(V)}: G^{(n)}(V) \to [0,\infty)$ as defined in \cite{lyons07}. One can show that all such norms are equivalent, and in particular all such norms are equivalent to 
\begin{equ}
\rho(x) = \sum_{k=1}^n\norm{\ell_k}^{1/k}\;, 
\end{equ}
where $x = \exp (\ell_1 + \dots + \ell_n)$ with $\ell_i \in W_i(V)$ and where $\norm{\cdot}$ denotes the Euclidean norm \cite{lyons07}. A path $\X : [0,T] \to G^{(n)}(V)$ is called $\gamma$-H\"older continuous if and only if
\begin{equ}\label{e:georegularity}
\sup_{s\neq t} \frac{\norm{\X_{st}}_{G^{(n)}(V)}}{|t-s|^\gamma} < \infty\;,
\end{equ}
where $\X_{st} = \X_s^{-1}\otimes \X_t$. These ingredients allow us to define rigorously a geometric rough path.
\begin{defn} Fix $\gamma \in (0,1)$ and let $N$ be the largest integer such that $N\gamma \leq 1$. A path $\X : [0,T] \to G^{(N)}(V)$ of H\"older exponent $\gamma$ is called a \emph{geometric rough path}.  
\end{defn}
In light of Remark \ref{rmk:truncate}, one can easily check that this definition of geometric rough paths is equivalent to the definition given in the introduction. In particular, by the equivalence of norms, the regularity condition \eqref{e:georegularity} is synonymous with the statement 
\begin{equ}\label{e:georegularity2}
\sup_{s\neq t}\frac{|\innerprod{\X_{st},e_{i_1 \dots i_k} }|}{|t-s|^{k\gamma}} < \infty\;,
\end{equ}
for any word $e_{i_1 \dots i_k}$. Hence, we also see that the regularity condition for a geometric rough path is identical to that of a branched rough path.
\par
It turns out that every $\gamma$-H\"older path in a Banach space $V$ can be extended to a path $\Xbar$ taking values in $G^{(N)}(V)$. That is, every path has a geometric rough path lying above it. This is a particular case of the following theorem proved in \cite{lyons07}. If $K$ is a normal subgroup of $G^{(N)}(V)$, we define the quotient homogeneous norm on the quotient group $G^{(N)}(V)/K$ by 
\begin{equ}
\norm{g\otimes K}_{G^{(N)}(V)/K} = \inf_{k\in K} \norm{g\otimes k }_{G^{(N)}(V)}\;.
\end{equ}
\begin{thm}[Lyons-Victoir extension]\label{thm:extension}
Let $\gamma \in (0,1)$ such that $\gamma^{-1} \notin \naturals \setminus \{0,1\}$. Suppose $K$ is a normal subgroup of $G^{(N)}(V)$. If $\X$ is a $\gamma$-H\"older continuous path in the quotient $G^{(N)}(V)/K$, then there exists a $\gamma$-H\"older continuous path $\Xbar$ taking values in $G^{(N)}(V)$ and satisfying
\begin{equ}
\pi_{G^{(N)}(V)/ K} \left(\Xbar \right) = \X \;, 
\end{equ}  
where $\pi$ denotes the projection map. 
\end{thm}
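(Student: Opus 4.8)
The plan is to prove this by induction on the nilpotency step $N$, exploiting the fact that the hypothesis $\gamma^{-1}\notin\naturals\setminus\{0,1\}$ is precisely equivalent to the \emph{strict} inequality $N\gamma<1$: since $N$ is the largest integer with $N\gamma\le1$, only the borderline case $N\gamma=1$ is being ruled out, and that extra room is exactly what the construction needs. The base case $N=1$ is immediate: $G^{(1)}(V)=V$ is abelian, $K$ is a closed linear subspace, and post-composing $\X$ with any bounded linear section $V/K\to V$ produces a $\gamma$-H\"older lift.

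For the inductive step I would peel off the top layer. Let $\pi:G^{(N)}(V)\to G^{(N-1)}(V)$ be the projection that discards the degree-$N$ component; its kernel is the central vector subgroup $1+W_N(V)$. Since $\pi$ does not increase homogeneous norms (up to a constant), the induced map sends $\X$ to a $\gamma$-H\"older path in $G^{(N-1)}(V)/\pi(K)$, which by the inductive hypothesis lifts to a $\gamma$-H\"older path $\bar Z$ in $G^{(N-1)}(V)$. It remains to add back the top layer: fixing any continuous lift $\hat Z$ of $\bar Z$ through $\pi$, every candidate has the form $\Xbar_t=\hat Z_t\,(1+\phi_t)$ with $\phi_t\in W_N(V)$, and centrality of $1+W_N(V)$ makes Chen's relation automatic while reducing the problem to two requirements on $\phi$. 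First, writing $\mathfrak{k}_N=\{w\in W_N(V):1+w\in K\}$, the condition that $\Xbar$ lie above $\X$ pins the degree-$N$ part of $\log\Xbar_{st}$ modulo $\mathfrak{k}_N$ to the corresponding data of $\X$ (which, being that of a $\gamma$-H\"older path, has size $\lesssim|t-s|^{N\gamma}$); so $\phi_{st}:=\phi_t-\phi_s$ is forced modulo $\mathfrak{k}_N$ and free in the $\mathfrak{k}_N$-directions. Second, we need $|\phi_{st}|\lesssim|t-s|^{N\gamma}$, the other layers being already controlled by $\bar Z$.

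After choosing a bounded section to fix the forced, $W_N(V)/\mathfrak{k}_N$-valued part, the task becomes: construct the $\mathfrak{k}_N$-valued part $\psi_{st}$ solving the \emph{quasi-additive} identity $\psi_{st}=\psi_{su}+\psi_{ut}+\eta_{sut}$, where $\eta$ is an explicit ``defect'' built from the lower layers of $\bar Z$ (coming from the Baker--Campbell--Hausdorff correction to Chen's relation) and satisfies $|\eta_{sut}|\lesssim\sum_{i+j=N,\,i,j\ge1}|u-s|^{i\gamma}|t-u|^{j\gamma}$, and we want $|\psi_{st}|\lesssim|t-s|^{N\gamma}$. The standard sewing map is of no use here, since the defect only carries total regularity $N\gamma\le1$; instead I would construct $\psi$ on dyadic times by recursion, splitting the forced quantity $\psi_{u,m}+\psi_{m,v}=\psi_{u,v}-\eta_{u,m,v}$ at the midpoint $m$ of a level-$n$ dyadic interval $[u,v]$ as symmetrically as the $\mathfrak{k}_N$-freedom allows. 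A direct estimate then yields $|\psi^{(n)}|\lesssim 2^{-n}\bigl(1+\sum_{j<n}\alpha^{j}\bigr)$ with $\alpha=2^{1-N\gamma}$, and here the hypothesis is decisive: when $N\gamma<1$ one has $\alpha>1$, the geometric sum is $\asymp\alpha^{n}$, and the bound collapses to $|\psi^{(n)}|\lesssim(2^{-n})^{N\gamma}$ uniformly in $n$; whereas $N\gamma=1$ gives $\alpha=1$ and only the logarithmically weaker $|\psi^{(n)}|\lesssim n\,2^{-n}$. Finally, subadditivity of the homogeneous norm together with the usual dyadic decomposition of an arbitrary pair $s<t$ promotes this uniform control of neighbouring dyadic increments to the full $\gamma$-H\"older bound on all dyadic times, and completeness of $G^{(N)}(V)$ in its homogeneous metric lets one extend $\Xbar$ to all of $[0,T]$.

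I expect the main obstacle to be exactly the phenomenon the hypothesis is designed to tame: a naive ``refine and inherit'' construction multiplies the H\"older constant by $2^{\gamma}>1$ at each dyadic level and therefore diverges, so one must genuinely use the fibre freedom (the $\mathfrak{k}_N$-directions) to re-centre at every scale, and the symmetric-splitting estimate only produces the correct homogeneity because $N\gamma$ is strictly below $1$. A secondary, more bookkeeping-type difficulty is that a normal subgroup $K$ need be neither graded nor connected, so one needs some care --- using the quotient homogeneous norm $\norm{\cdot}_{G^{(N)}(V)/K}$ and its monotonicity under $\pi$ --- in choosing compatible representatives of $\X_{st}$ along the dyadic tree so that the forced data is consistent with Chen's relation and genuinely has the claimed $|t-s|^{N\gamma}$ regularity.
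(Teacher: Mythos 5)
The paper does not prove Theorem~\ref{thm:extension}: it is imported verbatim from Lyons and Victoir \cite{lyons07}, with the paper adding only a remark that the restriction $\gamma^{-1}\notin\naturals$ is necessary (citing \cite{victoir04} for a counterexample) and that the extension map can be chosen measurably/constructively. There is therefore no internal proof against which to compare yours.

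As a reconstruction of the Lyons--Victoir argument, your proposal captures the right structure: strip one homogeneity degree at a time; reduce the top degree to a $W_N(V)$-valued correction; observe that sewing cannot be used because the Baker--Campbell--Hausdorff defect at degree $N$ has total regularity only $N\gamma\le1$; and instead build the path on dyadics with midpoint re-centering in the free $\mathfrak{k}_N$-directions, so the recursion $c_{n+1}\le\tfrac12c_n+C(2^{-n})^{N\gamma}$ closes precisely because $N\gamma<1$ gives $2^{-N\gamma}>\tfrac12$. Your reading of the hypothesis is also right: for $\gamma\in(0,1)$, $\gamma^{-1}\notin\naturals\setminus\{0,1\}$ is equivalent to $N\gamma<1$. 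Two points deserve tightening. First, you both say that writing $\Xbar_t=\hat Z_t(1+\phi_t)$ makes Chen's relation automatic \emph{and} that the defect $\eta_{sut}$ comes from ``the BCH correction to Chen's relation''; these cannot both be literally true. The resolution is that $\psi_{st}$ is not the increment of a path even though $\delta\phi_{st}$ is additive: $\log_N\hat Z_{st}$ is only BCH-quasi-additive, with a defect built from the lower degrees of $\hat Z$, and that is what furnishes the cocycle $\eta$ (the cocycle identity for $\eta$, needed to propagate consistency of the dyadic splitting across all triples, follows from associativity of BCH and should be stated). Second, the base case and the existence of a bounded section implicitly assume $K$ is a closed \emph{connected} subgroup so that $\mathfrak{k}_N$ is a linear subspace; this holds in all the applications in this paper since $K=\exp(L)$ for a Lie ideal $L$, but the theorem as stated allows a general normal subgroup and one should say a word about why that does not cause trouble.
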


\begin{rmk}
The restriction $\gamma^{-1} \notin \naturals \setminus \{0,1\}$ is a necessary one and a counter example can be found in \cite{victoir04}. Hence, all our results in this chapter actually assume $\gamma \in (0,1)$ with $\gamma^{-1} \notin \naturals$. 
\end{rmk}

\begin{example}
To give an idea of the type of situation in which this theorem applies, let $\X$ be a geometric rough path in $\Ntensor(\reals^d)$, lying above a path $X \in \reals^d$ and suppose we would like to add a new path component $X^{d+1}$ to $X$, by setting $\xbar = (X,X^{d+1})$. The extension theorem tells us that there exists a geometric rough path $\Xbar$ above $\xbar$ that agrees with $\X$ on the subspace $\Ntensor(\reals^d) \subset \Ntensor(\reals^{d+1})$. To be precise, we set 
\begin{equ}
\Xhat_t = \exp\left( \log\X_t + x^{d+1}_{t} e_{d+1}  \right)\;.
\end{equ}
This is an element in $\Ntensor(\reals^{d+1})$ and one can easily check that it is $\gamma$-H\"older in the quotient space $G^{(N)}(\reals^{d+1})/K$, where $K=\exp L$ and $L$ is the Lie ideal generated by 
\begin{equ}[]
[e_{d+1},\reals^d]_{\otimes} = \spn \{e_{d+1}\otimes e_j - e_j \otimes e_{d+1} : j=1\dots d\}. 
\end{equ}
In particular, under the quotient norm we can effectively ignore all bracket terms involving $e_{d+1}$, and the $\gamma$-H\"older property then follows from the fact that $\X$ is $\gamma$-H\"older in $G^{(N)}(\reals^d)$. Theorem \ref{thm:extension} tells us that we can \emph{add} the missing $e_{d+1}$ components to obtain a geometric rough path $\Xbar$ on $\Ntensor(\reals^{d+1})$.

\end{example}

\begin{rmk}
Although the proof of Theorem \ref{thm:extension}, as stated in \cite{lyons98}, appears to require the axiom of choice, the map $\X \mapsto \Xbar$ can actually be defined \emph{constructively} (and hence the map is measurable). In particular, this implies that the components of $\Xbar$ can be built explicitly from the components of $\X$. 
\end{rmk}

\subsection{Geometric rough paths are branched rough paths}
It should be no surprise that a geometric rough path is a special kind of branched rough path. As mentioned in Remark \ref{rmk:tensor_identify} the tensor algebra $T(\reals^d)$ can be identified with the subspace of $\hopf$ spanned by the linear trees. Given a geometric rough path $\Xbar$, the idea is to extend $\Xbar$ from this subspace of linear trees to a branched rough path $\X$ defined on the whole of $\hopf$. To perform this extension, we simply replace $\hopf$ products with $\shuffle$ products. That is, we set 
\begin{equ}\label{e:geocondition}
\innerprod{\X_{st},h} = \innerprod{\Xbar_{st},\phi_g(h)}\;,
\end{equ}  
for every $h \in \hopf_N$, where the map $\phi_g : \hopf \to T(\reals^d)$ is defined by the rules $\phi_g(1) = 1$\;,
\begin{equ}
\phi_g([h]_i) = \phi_g(h)\otimes e_{i} \quad \text{and} \quad \phi_g(h_1 h_2) = \phi_g(h_1) \shuffle \phi_g(h_2)\;,
\end{equ}
for $h,h_1,h_2 \in \hopf$ and $e_i$ is the $i$-th canonical basis vector in $\reals^d$. For example, we have
\begin{equ}
\phi_g(\bullet_{a}\bullet_{b}) =  e_{ab}+ e_{ba} \quad \text{and} \quad \phi_g (\mytreetwoone{a}{b}{c}\;\;) = e_{abc} + e_{bac} \;,
\end{equ}
where $e_{ab} = e_a\otimes e_b$ and so forth.
\begin{prop}\label{prop:geobranched}
If $\Xbar$ is a $\gamma$-H\"older geometric rough path defined on $T^{(N)}(\reals^d)$ and $\X$ is defined by \eqref{e:geocondition}, then $\X$ is a $\gamma$-H\"older branched rough path on $\hopf$. 
\end{prop}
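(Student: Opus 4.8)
The plan is to verify directly that the two-parameter object $\X$ defined by \eqref{e:geocondition} satisfies the three conditions in the definition of a $\gamma$-H\"older branched rough path given in the introduction, the heart of the matter being that $\phi_g$ is a \emph{graded morphism of Hopf algebras} from the Connes--Kreimer Hopf algebra $(\hopf,\cdot,\Delta)$ to the shuffle Hopf algebra $(T(\reals^d),\shuffle,\Deltabar)$. First I would record two preliminary facts. The map $\phi_g$ is well defined: every $\tau\in\trees$ has a unique representation $\tau=[\tau_1\dots\tau_m]_i$ with the $\tau_j$ of strictly fewer vertices, so the rule $\phi_g([\tau_1\dots\tau_m]_i)=(\phi_g(\tau_1)\shuffle\dots\shuffle\phi_g(\tau_m))\otimes e_i$ determines $\phi_g$ on $\trees$ by recursion on the number of vertices, consistently since $\shuffle$ is commutative and associative, and then on all of $\hopf$ by the multiplicativity rule since $\hopf$ is the free commutative algebra on $\trees$. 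Moreover $\phi_g$ is graded: a one-line induction on the number of vertices gives $\phi_g(\hopf_{(n)})\subset(\reals^d)^{\otimes n}$, because passing from $h$ to $[h]_i$ raises the number of vertices and the tensor length each by one, while $\shuffle$ adds lengths. In particular $\phi_g(\hopf_N)\subset T^{(N)}(\reals^d)$, so the right-hand side of \eqref{e:geocondition} makes sense on $\hopf_N$.

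Granting these, conditions (1) and (3) are immediate. For the character property, $\phi_g$ is an algebra morphism and $\Xbar_{st}$ satisfies the shuffle relation \eqref{e:geoshuffle}, so
\begin{equ}
\innerprod{\X_{st},h_1 h_2}=\innerprod{\Xbar_{st},\phi_g(h_1)\shuffle\phi_g(h_2)}=\innerprod{\X_{st},h_1}\innerprod{\X_{st},h_2}\;.
\end{equ}
For the analytic bound, if $\tau\in\trees$ then $\phi_g(\tau)$ is a finite real combination of words of length exactly $|\tau|$, so \eqref{e:georegularity2} yields $|\innerprod{\X_{st},\tau}|=|\innerprod{\Xbar_{st},\phi_g(\tau)}|\leq C_\tau|t-s|^{\gamma|\tau|}$, which is condition (3).

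The substance of the proof is condition (2), Chen's property. Combining \eqref{e:geocondition} with Chen's property for the geometric rough path $\Xbar$ (that is, $\Xbar_{st}=\Xbar_{su}\otimes\Xbar_{ut}$, dualised against $\Deltabar$), one gets $\innerprod{\X_{st},h}=\innerprod{\Xbar_{su}\otimestilde\Xbar_{ut},\Deltabar\phi_g(h)}$ for all $h\in\hopf$, whereas $\innerprod{\X_{su}\otimestilde\X_{ut},\Delta h}=\innerprod{\Xbar_{su}\otimestilde\Xbar_{ut},(\phi_g\otimestilde\phi_g)\Delta h}$. Hence Chen's property for $\X$ reduces to showing that $\phi_g$ is also a coalgebra morphism,
\begin{equ}
\Deltabar\circ\phi_g=(\phi_g\otimestilde\phi_g)\circ\Delta\;.
\end{equ}
I would prove this by induction on the number of vertices. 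Since $\Delta$, $\Deltabar$ and $\phi_g$ are all algebra morphisms (for $\cdot$, for $\shuffle$, and for $\phi_g$ mapping $\cdot$ to $\shuffle$, together with the induced morphisms on the tensor squares), it suffices to check the identity on a single tree $\tau=[\tau_1\dots\tau_m]_i$. For this I would push the recursive coproduct formula \eqref{e:recursive_coproduct} for $\Delta\tau$ through $\phi_g\otimestilde\phi_g$, apply the inductive hypothesis to the smaller trees $\tau_j$ and multiplicativity of $\phi_g$ to rewrite the result in terms of $\Deltabar(\phi_g(\tau_1\dots\tau_m))$, and finally match it against $\Deltabar\phi_g(\tau)=\Deltabar(\phi_g(\tau_1\dots\tau_m)\otimes e_i)$ using the elementary deconcatenation identity $\Deltabar(w\otimes e_i)=(\Id\otimestilde R_i)\Deltabar w+(w\otimes e_i)\otimestilde 1$, where $R_i w=w\otimes e_i$. (Alternatively the same identity follows from the combinatorial description of $\phi_g(\tau)$ as the sum of the words read off the linear extensions of $\tau$, matched against the admissible-cut picture of $\Delta$ recalled in Remark~\ref{rmk:cuts} together with deconcatenation of words.)

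The only non-routine step is the inductive verification of the coalgebra morphism identity; conditions (1) and (3), and the reduction of (2) to that identity, are formal. Once it is in hand, $\X$ satisfies all three conditions and is a $\gamma$-H\"older branched rough path (in particular condition (1) says precisely that $\X_{st}\in G_N(\hopf)$); as a byproduct $\innerprod{\X_{st},\bullet_i}=\innerprod{\Xbar_{st},e_i}$, so $\X$ lies above the same path $X$ as $\Xbar$.
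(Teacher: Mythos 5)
Your proposal is correct and takes essentially the same route as the paper: the key step in both is establishing that $\phi_g$ is a coalgebra morphism, $\Deltabar\circ\phi_g=(\phi_g\otimestilde\phi_g)\circ\Delta$, which the paper isolates as Lemma~\ref{lem:deltaphig} and proves by the same induction on the grading (treating the tree case $[h_1]_i$ via the deconcatenation identity for $\Deltabar(w\otimes e_i)$ and the product case via multiplicativity), after which Chen's property, the character property and the regularity bound all follow exactly as you describe.
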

This also provides a way to test the geometricity of a branched rough path. In particular, a branched rough path is geometric if and only if the identity
\begin{equ}
\innerprod{\X_{st},h} = \innerprod{\X_{st},\iota \phi_g(h)}\;,
\end{equ}
holds for every $h \in \hopf_N$, where $\iota: T(\reals^d) \to \hopf$ is the inclusion map that identifies each tensor in $T(\reals^d)$ with its corresponding linear tree in $\hopf$. Before proving the proposition, we need an important lemma. The map $\phi_g$ is clearly a morphism from $\cdot$ to $\shuffle$. What is less clear is that it is also a morphism of coproducts $\Delta$ and $\Deltabar$ and hence a Hopf algebra morphism. This is crucial in guaranteeing that $\X$ constructed above satisfies the right algebraic conditions. 
\begin{rmk}
There is a well known `universality' result \cite{foissy13}, which states the following. Let $\CK$ be any Hopf algebra and let $\{L_i : \CK \to \CK \}_{i=1\dots d}$ be any collection of $1$-cocycles with respect to the coproduct, then there exists a Hopf algebra morphism $\zeta : \hopf \to \CK$ satisfying 
\begin{equ}
\zeta([\tau_1 \dots \tau_n]_i) = L_i \circ \zeta(\tau_1 \dots \tau_n)\;,
\end{equ}
for any $[\tau_1 \dots \tau_n]_i \in \trees$. By taking $\CK = T(V)$ and  $L_i (e_{j_1 \dots j_n}) = e_{j_1 \dots j_n i}$, we see that the map $\zeta$ is precisely $\phi_g$. However, in our case it is not too difficult to simply check that the map $\phi_g$ is indeed a Hopf algebra morphism and we include this in Lemma \ref{lem:deltaphig}.
\end{rmk}

For the following, recall that $\forest_{(n)}$ is all $\tau_1 \dots \tau_k \in \forest$ with $|\tau_1| +  \dots +  |\tau_k| =n$ and that $\hopf_{(n)}$ is the vector space spanned by $\forest_{(n)}$ and also that $\forest_n$ is all $\tau_1 \dots \tau_k \in \forest$ with $|\tau_1| + \dots + |\tau_k| \leq n$.
\begin{lemma}\label{lem:deltaphig}
We have that
\begin{equ}\label{e:deltaphig}
\Deltabar \phi_g(h) = (\phi_g \otimestilde \phi_g) \Delta h\;,
\end{equ}
for every $h \in \hopf$.
\end{lemma}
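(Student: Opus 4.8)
The plan is to recognise that both sides of \eqref{e:deltaphig} define algebra morphisms from $(\hopf,\cdot)$ into the algebra $\big(T(\reals^d)\,\otimestilde\,T(\reals^d),\ \shuffle\otimestilde\shuffle\big)$, so that it suffices to verify the identity on the algebra generators of $\hopf$, namely on a single tree $\tau\in\trees$. Indeed, $\phi_g$ is by construction a morphism from $\cdot$ to $\shuffle$, and $\Deltabar$ is a morphism from $\shuffle$ to $\shuffle\otimestilde\shuffle$ because $(T(\reals^d),\shuffle,\Deltabar)$ is a bialgebra; composing, $\Deltabar\circ\phi_g$ is a morphism $\cdot\to\shuffle\otimestilde\shuffle$. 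On the other hand $\Delta$ is a morphism from $\cdot$ to $\cdot\otimestilde\cdot$ since $(\hopf,\cdot,\Delta)$ is a bialgebra, and $\phi_g\otimestilde\phi_g$ is a morphism from $\cdot\otimestilde\cdot$ to $\shuffle\otimestilde\shuffle$; hence $(\phi_g\otimestilde\phi_g)\circ\Delta$ is also a morphism $\cdot\to\shuffle\otimestilde\shuffle$. Since $\hopf$ is generated as a commutative algebra by $\trees$, two such morphisms agreeing on every $\tau\in\trees$ agree on all of $\hopf$.

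Next I would prove the identity on trees by induction on $|\tau|$, the number of vertices. The base case $\tau=\bullet_i=[1]_i$ is immediate: $\phi_g(\bullet_i)=e_i$, so $\Deltabar\phi_g(\bullet_i)=e_i\otimestilde1+1\otimestilde e_i$, while $\Delta\bullet_i=\bullet_i\otimestilde1+1\otimestilde\bullet_i$ gives $(\phi_g\otimestilde\phi_g)\Delta\bullet_i=e_i\otimestilde1+1\otimestilde e_i$. For the inductive step write $\tau=[\tau_1\cdots\tau_m]_i$ with each $|\tau_j|<|\tau|$, set $P=\phi_g(\tau_1\cdots\tau_m)=\phi_g(\tau_1)\shuffle\cdots\shuffle\phi_g(\tau_m)$, and note $\phi_g(\tau)=P\otimes e_i$. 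The one genuinely computational ingredient is the ``deconcatenation cocycle'' identity: for any $w\in T(\reals^d)$,
\[
\Deltabar\,(w\otimes e_i)=(w\otimes e_i)\otimestilde1+\big(\Id\otimestilde(\,\cdot\,\otimes e_i)\big)\Deltabar w\;,
\]
which follows directly from the definition of $\Deltabar$ as deconcatenation, by splitting a word before or after its last letter $e_i$. Applying this with $w=P$, then using that $\Deltabar$ is a $\shuffle$-morphism together with the inductive hypothesis $\Deltabar\phi_g(\tau_j)=(\phi_g\otimestilde\phi_g)\Delta\tau_j=\sum\phi_g(\tau_j^{(1)})\otimestilde\phi_g(\tau_j^{(2)})$ and the fact that $\phi_g$ sends products to shuffles, I would obtain
\[
\Deltabar\phi_g(\tau)=\phi_g(\tau)\otimestilde1+\sum_{(\tau_1)\cdots(\tau_m)}\phi_g\big(\tau_1^{(1)}\cdots\tau_m^{(1)}\big)\otimestilde\phi_g\big([\tau_1^{(2)}\cdots\tau_m^{(2)}]_i\big)\;.
\]
Comparing this with what $\phi_g\otimestilde\phi_g$ produces when applied to the recursive formula \eqref{e:recursive_coproduct} for $\Delta[\tau_1\cdots\tau_m]_i$ yields exactly the same expression, which closes the induction.

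The main point requiring care is the Sweedler bookkeeping in the inductive step: one must track that ``$\Deltabar$ is a shuffle morphism'' turns a shuffle of $m$ coproducts into the single Sweedler sum $\sum\phi_g(\tau_1^{(1)}\cdots\tau_m^{(1)})\otimestilde\phi_g(\tau_1^{(2)}\cdots\tau_m^{(2)})$, and that $\big(\Id\otimestilde(\,\cdot\,\otimes e_i)\big)$ then converts the right leg $\phi_g(\tau_1^{(2)}\cdots\tau_m^{(2)})$ into $\phi_g([\tau_1^{(2)}\cdots\tau_m^{(2)}]_i)$ precisely by the defining rule of $\phi_g$ on $[\,\cdot\,]_i$. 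Everything else is the formal ``two algebra morphisms, check on generators'' principle plus the purely combinatorial cocycle identity for deconcatenation; in particular there is no circularity, since the branches $\tau_j$ are strictly smaller than $\tau$, so there is no real obstacle once the reduction to trees is in place.
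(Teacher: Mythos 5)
Your proof is correct and is essentially the paper's proof in a mildly reorganised form. The paper runs an induction on the grade $n$ over all of $\forest_{(n)}$ and inside the inductive step splits into the two cases $h=[h_1]_i$ and $h=h_1 h_2$, handling the product case via exactly the morphism observation you front-load; you instead dispose of the product case once and for all by noting both sides are algebra morphisms $(\hopf,\cdot)\to(T(\reals^d)\otimestilde T(\reals^d),\shuffle\otimestilde\shuffle)$, which reduces the induction to trees alone. The remaining computational content — the deconcatenation cocycle identity $\Deltabar(w\otimes e_i)=(w\otimes e_i)\otimestilde 1 + (\Id\otimestilde(\cdot\otimes e_i))\Deltabar w$, the compatibility of $\Deltabar$ with $\shuffle$, and the Sweedler bookkeeping that matches against the recursive formula \eqref{e:recursive_coproduct} — is identical to the paper's. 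Your streamlining buys a slightly cleaner induction (no case split, and the base case on $\bullet_i$ rather than on all of $\hopf_{(1)}$), at no cost.
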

\begin{proof}
When applied to any $h\in \hopf_{(1)}$, the identity \eqref{e:deltaphig} is clear, so assume the claim holds on all $h\in \hopf_{(n)}$, we will prove that the claim holds for $\forest_{(n+1)}$ and hence $\hopf_{(n+1)}$. If $h \in \forest_{(n+1)}$, then $h = [h_1]_i$ for some $h_1 \in \forest_{(n)}$ or $h = h_1 h_2$ for $h_1 \in \forest_{(p)}, h_2 \in \forest_{(q)}$ for $p+q=n$. In the first case, 
\begin{equ}
\Deltabar \phi_g([h_1]_i) = \Deltabar \big( \phi_g(h_1)\otimes e_{i} \big) = \big( \phi_g(h_1)\otimes e_{i} \big)\otimestilde 1 + (\Deltabar \phi_g(h_1)) \otimes (1\otimestilde e_{i} )\;.
\end{equ}
By the inductive assumption, $\Deltabar \phi_g (h_1) = (\phi_g\otimestilde \phi_g) \Delta h_1$. If $\Delta h_1 = h_1^{(1)}\otimestilde h_1^{(2)}$, then we obtain  
\begin{align*}
\phi_g &([h_1]_i)\otimestilde 1 + (\phi_g \otimestilde \phi_g)(h_1^{(1)}\otimestilde h_1^{(2)})\otimes(1\otimestilde e_{i})\\
&= (\phi_g \otimestilde \phi_g) \big([h_1]_i\otimestilde 1 + h_1^{(1)}\otimestilde [h_1^{(2)}]_i  \big) = (\phi_g \otimestilde \phi_g)\Delta [h_1]\;.
\end{align*}
In the second case, 
\begin{equ}
\Deltabar\phi_g (h_1 h_2) = \Deltabar \big(\phi_g(h_1)\shuffle \phi_g(h_2)\big) = (\Deltabar \phi_g(h_1))\shuffle(\Deltabar \phi_g(h_2))\;,
\end{equ}
where we have used the fact that $\Deltabar$ is a morphism with respect to $\shuffle$. By the inductive assumption, we obtain
\begin{equ}
(\phi_g \otimestilde \phi_g)(\Delta h_1) \shuffle (\phi_g \otimestilde \phi_g)(\Delta h_2) = (\phi_g \otimestilde \phi_g) (\Delta h_1 \cdot \Delta h_2) = (\phi_g \otimestilde \phi_g) \Delta (h_1 h_2)\;,
\end{equ}
where, in the first equality we have used the fact that $\phi_g$ is a $\shuffle$ morphism and $\Delta$ is a $\cdot$ morphism. This proves \eqref{e:deltaphig}.
\end{proof}
\begin{proof}[Proof of Proposition \ref{prop:geobranched}]
From \eqref{e:geocondition}, the path $\X$ is only defined through the incremental object $\X_{st}$. Hence, we must first check that $\X_{tt} = 1$ and that
\begin{equ}\label{e:geomult}
\X_{st} = \X_{su}\star \X_{ut},
\end{equ}
for every $s,u,t \in [0,T]$. The first claim follows from the fact that $\Xbar_{tt}=1$ and that $\phi_g^* 1 =1 $, where $\phi_g^*$ is the adjoint of $\phi_g$ and $1$ is the counit. To check \eqref{e:geomult}, notice that 
\begin{equ}
\innerprod{\X_{su}\star \X_{ut},h} = \innerprod{\X_{su}\otimestilde \X_{ut},\Delta h} = \innerprod{\Xbar_{su}\otimestilde \Xbar_{ut},(\phi_g \otimestilde \phi_g)\Delta h}\;.
\end{equ}
Applying Lemma \ref{lem:deltaphig}, the above equals
\begin{equ}
\innerprod{\Xbar_{su}\otimestilde \Xbar_{ut},\Deltabar \phi_g (h)} = \innerprod{\Xbar_{su}\otimes \Xbar_{ut},\phi_g (h)} = \innerprod{\Xbar_{st},\phi_g (h)}  =\innerprod{\X_{st},h}\;.
\end{equ}
The regularity condition \eqref{e:Gnorm} for a branched rough path follows easily from the fact that $\phi_g(\tau)$ is, for every $\tau \in \trees$, a linear combination in $(\reals^d)^{\otimes |\tau|}$. Hence, the regularity of $\innerprod{\X_{st},\tau}$ will follow from \eqref{e:georegularity2}. We finally check that $\X_t \defin \X_{0t}$ takes values in the truncated group-like elements $G_N(\hopf)$. Since $\phi_g$ is a morphism with respect to $\cdot$ and $\shuffle$, we have that 
\begin{equ}
\innerprod{\X_t,h_1 h_2} = \innerprod{\Xbar_t,\phi_g(h_1 h_2)} = \innerprod{\Xbar_t,\phi_g(h_1)\shuffle \phi_g(h_2)}\;,
\end{equ}
for any $h_1, h_2 \in \hopf$ with $|h_1|+|h_2| \leq N$. Since $\Xbar$ is geometric and hence group-like, \eqref{e:geoshuffle} yields 
\begin{equ}
\innerprod{\Xbar_t,\phi_g(h_1)\shuffle \phi_g(h_2)}=\innerprod{\Xbar_t, \phi_g(h_1)}\innerprod{\Xbar_t,\phi_g(h_2)} = \innerprod{\X_t,h_1}\innerprod{\X_t,h_2}\;.
\end{equ}
Hence, $\X$ takes values in $G_N(\hopf)$. 
\end{proof}

\subsection{Branched rough paths are geometric rough paths}
The main result of this subsection provides a converse to Proposition \ref{prop:geobranched}, namely, for a given branched rough path $\X$ lying above a path $X$, we can construct a geometric rough path $\Xbar$ lying above a higher dimensional path $\xbar$, in such a way that $\Xbar$ contains all the information of $\X$. Hence, every branched rough path can be viewed as a geometric rough path, living in an extended space.
\par
Before stating the main result, we first need some notation. As above, let $\B, \B_n$ be the real vector spaces spanned by $\trees, \trees_n$ respectively, then we can then define the tensor product algebras $T(\B), T(\B_n)$ exactly as above. In $T(\B)$ (and $T(\B_n)$), the elements of $\trees$ ($\trees_n$) are indivisible objects with respect to the coproduct $\Deltabar$, that is
\begin{equ}
\Deltabar \tau = 1\otimestilde \tau + \tau \otimestilde 1\;,
\end{equ}
for any $\tau \in \trees$. Moreover, the basis elements of $T(\B)$ are tensors of the form $\tau_1\otimes \dots \otimes \tau_k$, for $\tau_i \in \trees$ and similarly for $T(\B_n)$. As usual, we denote the truncated tensor algebra by
\begin{equ}\label{e:TBN_truncated}
T^{(N)}(\B) = \bigoplus_{k=0}^N \B^{\otimes k} \quad \text{and} \quad T^{(N)}(\B_n) = \bigoplus_{k=0}^N \B_n^{\otimes k}\;.
\end{equ}
Every tensor product space can be equipped with the usual grading which counts the number of non-trivial factors in each tensor product. However, we equip $T(\B)$ (and $T(\B_n)$) with a grading that does not ignore the individual grading of the trees. That is, we have
\begin{equ}\label{e:tensor_grading}
|\tau_1 \otimes \dots \otimes \tau_n| = |\tau_1| + \dots + |\tau_n|\;,
\end{equ} 
where $|\tau_i|$ is the $\hopf$ grading that counts the number of vertices in $\tau_i$. Hence, we have the decomposition
\begin{equ}
T(\B) = \bigoplus_{m=0}^\infty T(\B)_{(m)}\;,
\end{equ}
where $T(\B)_{(m)}$ is the vector space spanned by the tensors $\tau_1 \otimes \dots \otimes \tau_k$ for $\tau_i \in \trees$ with $|\tau_1| +  \dots + |\tau_k| = m$, with the convention $T(\B)_{(0)}=\reals$. 
\par
We will construct a path $\xbar$ taking values in the vector space $\B_N$. Since $\B_1 \cong \reals^d$, to say that $\xbar$ is an extension of $X$ means that $\pi_{\B_1}(\xbar)=X$. The geometric rough path $\Xbar$ will be built in the space $T^{(N)}(\B_N)$ defined by \eqref{e:TBN_truncated}, satisfying $\innerprod{\Xbar_{st},\tau} = \delta \xbar^{\tau}_{st}$ for each $\tau \in \trees_N$. Moreover, the tensor components must be interpreted (formally) as candidates for the iterated integrals
\begin{equ}
\inner{\Xbar_{st},\tau_1 \otimes \dots \otimes \tau_n} \defin \int_s^t \dots \int_s^{v_2} d\xbar^{\tau_1}_{v_1} \dots d\xbar^{\tau_n}_{v_n}\;,
\end{equ}
which cannot be defined in the usual Riemann sense. 
\par
Recall that $\hopf$ has the decomposition
\begin{equ}
\hopf = \bigoplus_{m=0}^\infty \hopf_{(m)}\;,
\end{equ}
where $\hopf_{(m)}$ is the vector space spanned by $\forest_{(m)}$, the set of all $\tau_1 \dots \tau_k \in \forest$ with $|\tau_1| + \dots + |\tau_k|=m$.  The construction of $\Xbar$ relies on the following graded morphism of Hopf algebras, that is, a linear map $\psi : \hopf_{(m)} \to T(\B)_{(m)}$ for each $m\in \naturals$, which is a morphism with respect to products and coproducts.   
\begin{lemma}\label{lem:psiexist}
There exists a graded morphism of Hopf algebras $\psi: (\hopf,\cdot,\Delta) \to (T(\B),\shuffle,\Deltabar)$ satisfying
\begin{equ}\label{e:psi_condition}
\psi(\tau) = \tau + \psi_{n-1}(\tau)\;,
\end{equ}
for any $\tau \in \trees_n$, where $\psi_{n-1}$ denotes the projection of $\psi$ onto $T(\B_{n-1})$.
 \end{lemma}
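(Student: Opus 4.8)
The plan is to construct $\psi$ by dualising. A direct construction is awkward: since $(T(\B),\shuffle)$ is commutative and $\hopf$ is the free commutative algebra on $\trees$, a unital algebra morphism $\hopf\to(T(\B),\shuffle)$ is just an arbitrary linear map $\B\to T(\B)$, but the naive choice $\tau\mapsto\tau$ is \emph{not} a coalgebra morphism, since then $(\psi\otimestilde\psi)\Delta'\tau$ would have to vanish, which fails already for $\tau=\mytreeoneone{a}{b}$. On the dual side this obstruction disappears. Form the tensor algebra $T(\B^*)=\bigoplus_k(\B^*)^{\otimes k}$, where $\B^*$ is the span of the dual trees $\trees^*$ inside $\hopf^*$; equipped with the concatenation product and the coproduct $\Deltabar^{\vee}$ dual to $\shuffle$ it is, grade by grade, the dual Hopf algebra of $(T(\B),\shuffle,\Deltabar)$. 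Crucially $T(\B^*)$ is \emph{free} as a unital associative algebra on $\B^*$, the elements of $\B^*$ are $\Deltabar^{\vee}$-primitive, and any algebra morphism out of $T(\B^*)$ sending the generators to primitive elements is automatically a morphism of bialgebras.

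Concretely, by Proposition~\ref{prop:algebra} the subspace $\B^*=\spn\trees^*$ of $\hopf^*$ is exactly $P(\hopf)$, so every element of $\B^*$ is $\delta$-primitive. Because $(\hopf^*,\star)$ is a unital associative algebra, the inclusion $\iota\colon\B^*\hookrightarrow\hopf^*$ extends uniquely to a unital algebra morphism $\psi^*\colon(T(\B^*),\text{concatenation})\to(\hopf^*,\star)$, and $\psi^*$ is graded since $\iota$ sends a dual tree with $n$ vertices into $\hopf^*_{(n)}$ and both products respect the grading. To see that $\psi^*$ is also a coalgebra morphism, I would observe that $\delta\circ\psi^*$ and $(\psi^*\otimestilde\psi^*)\circ\Deltabar^{\vee}$ are both algebra morphisms from $(T(\B^*),\text{concatenation})$ into $(\hopf^*\otimestilde\hopf^*,\star\otimestilde\star)$ — here using that $\delta$ is a morphism for $\star$ (recalled in Section~\ref{sec:hopf}) and that $\Deltabar^{\vee}$ is a morphism for concatenation — and that they agree on each generator $x\in\B^*$, where both give $1\otimestilde x+x\otimestilde 1$ because $x$ is $\delta$-primitive. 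Two algebra morphisms out of the free algebra $T(\B^*)$ agreeing on generators coincide, so $\psi^*$ is a bialgebra morphism, hence, both algebras being connected and graded, a morphism of Hopf algebras.

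Dualising $\psi^*$ grade by grade — legitimate because every $T(\B)_{(m)}$ and every $\hopf_{(m)}$ is finite dimensional — produces a graded bialgebra morphism $\psi\colon(\hopf,\cdot,\Delta)\to(T(\B),\shuffle,\Deltabar)$, again a Hopf algebra morphism by connectedness. It remains to check \eqref{e:psi_condition}. For trees $\tau,\sigma$ with $|\tau|=|\sigma|=n$ one has $\inner{\sigma^*,\psi(\tau)}=\inner{\psi^*(\sigma^*),\tau}=\inner{\iota(\sigma^*),\tau}=\inner{\sigma^*,\tau}=\delta_{\sigma\tau}$, so the component of $\psi(\tau)$ along the degree-$n$ length-one words $\B_{(n)}\subset T(\B)$ is precisely $\tau$. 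Since $\psi(\tau)$ lies in $T(\B)_{(n)}$ and $T(\B)_{(n)}=\B_{(n)}\oplus\big(T(\B_{n-1})\cap T(\B)_{(n)}\big)$ — any tensor of total weight $n$ with at least two factors has each factor of weight at most $n-1$ — it follows that $\psi(\tau)=\tau+\psi_{n-1}(\tau)$ with $\psi_{n-1}(\tau)\in T(\B_{n-1})$, which is \eqref{e:psi_condition}.

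The one idea the argument genuinely rests on is this passage to the dual: recognising that $(T(\B),\shuffle,\Deltabar)^{\vee}$ is a \emph{free} associative algebra and that, by Proposition~\ref{prop:algebra}, $\B=P(\hopf)$ consists of primitives, which is exactly what forces compatibility of $\psi$ with the coproducts — the only non-trivial requirement — to hold automatically. Everything else (the duality between the shuffle/deconcatenation and concatenation/unshuffle Hopf structures, the bookkeeping with the grading, and the short pairing computation isolating the leading term) is routine.
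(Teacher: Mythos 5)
Your proof is correct, but it takes a genuinely different route from the paper's. The paper constructs $\psi$ directly by induction on the grading: it defines $\psi_{n-1}(\tau) = \psi(\tau^1)\otimes\tau^2$ from the reduced coproduct $\Delta'\tau = \tau^1\otimestilde\tau^2$, sets $\psi(\tau) = \tau + \psi_{n-1}(\tau)$, extends multiplicatively, and then verifies the compatibility $(\psi\otimestilde\psi)\Delta = \Deltabar\psi$ by a hands-on computation that invokes coassociativity of $\Delta'$. You instead pass to the graded dual, where $T(\B^*)$ with concatenation is free as a unital associative algebra, and use that $\B^* = P(\hopf)$ by Proposition~\ref{prop:algebra} to extend the inclusion uniquely to an algebra morphism $\psi^*$; compatibility with the coproducts then comes for free, since $\delta\circ\psi^*$ and $(\psi^*\otimestilde\psi^*)\circ\Deltabar^{\vee}$ are algebra morphisms out of a free algebra that agree on primitive generators. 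Your approach isolates the structural mechanism (freeness of the dual plus primitivity of the trees) that makes the construction automatic, and avoids the coassociativity bookkeeping; the paper's approach is more elementary and, as a byproduct, produces the explicit recursion $\psi_{n-1}(\tau) = \psi(\tau^1)\otimes\tau^2$, which is what makes concrete evaluations like the example $\psi(\mytreetwoone{}{}{}\,)$ immediate. Since in both constructions the dual map $\psi^*$ is the unique algebra morphism from $(T(\B^*),\mbox{conc.})$ to $(\hopf^*,\star)$ sending each dual tree to itself (one checks this for the paper's $\psi$ by the grading argument you give for the leading term), the two constructions in fact yield the same $\psi$. One small remark: once you know $\psi$ is a bialgebra morphism, commutation with the antipodes is automatic for any morphism between Hopf algebras, so the appeal to connectedness is not actually needed at that step.
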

To illustrate the property \eqref{e:psi_condition}, consider the following example. In the unlabelled case $d=1$, we will see that
\begin{equ}
\psi(\mytreetwoone{}{}{}) = \mytreetwoone{}{}{} + 2 \bullet \otimes \bullet \otimes \bullet + 2 \bullet \otimes\; \ntreeoneone \;.
\end{equ} 
Thus, we have 
\begin{equ}
 \psi(\mytreetwoone{}{}{}) = \mytreetwoone{}{}{} + \psi_2(\mytreetwoone{}{}{})\;,
\end{equ}
where 
\begin{align*}
\psi_2(\mytreetwoone{}{}{}) = \pi_{T(\B_2)} \psi(\mytreetwoone{}{}{}) &= \pi_{T(\B_2)} \big( \mytreetwoone{}{}{} + 2 \bullet \otimes \bullet \otimes \bullet + 2 \bullet \otimes\; \ntreeoneone \big)\\
&= 2 \bullet \otimes \bullet \otimes \bullet + 2 \bullet \otimes\; \ntreeoneone\;.
\end{align*}
Notice that $\psi_2$ describes all the ways of \emph{cutting apart} the tree $\mytreetwoone{}{}{}$, this is essentially how $\psi$ is defined in general. 
\begin{proof}[Proof of Lemma \ref{lem:psiexist}]We will construct $\psi$ on each $\hopf_{(n)} $. For $n=1$, the condition \eqref{e:psi_condition} forces 
\begin{equ}
\psi(\bullet_{a}) = \bullet_{a},
\end{equ}
for each $a=1\dots d$. Hence, $\psi : \hopf_{(1)} \to \B_1 = T(\B)_{(1)}$, and it is trivial to check that $\psi$ is a morphism of Hopf algebras. Suppose that we have constructed such a map on $\hopf_{(k)}$, for all $1\leq k\leq n-1$. We will now construct an extension of $\psi$ to $\forest_{(n)}$ and hence $\hopf_{(n)}$. Elements in $\forest_{(n)}$ are either $\tau \in \trees_n$ or products of elements in $\forest_{(p)}$ and $\forest_{(q)}$ for $p+q=n$. We will firstly extend $\psi$ to $\trees_n$. 
\par
 Let $\tau \in \trees_n$ with $\Delta \tau = \tau^1 \otimestilde \tau^2 + 1\otimestilde \tau + \tau \otimestilde 1$, for some $\tau \in \trees_n$, where we sum over the non-trivial parts $\tau^1, \tau^2$. We define 
\begin{equ}\label{e:psi_definition}
\psi_{n-1}(\tau) = \psi(\tau^1)\otimes \tau^2\;.
\end{equ}
We then set $\psi(\tau) = \psi_{n-1}(\tau) + \tau$. To complete the extension we set
\begin{equ}
\psi(h_1 h_2 ) = \psi(h_1)\shuffle \psi(h_2)\;,
\end{equ}
for $h_1 h_2 \in \forest_{(n)}$ with $h_1 \in \forest_{(p)}$ and $h_2 \in \forest_{(q)}$. By construction, $\psi$ satisfies \eqref{e:psi_condition} and is a graded morphism of algebras on $\forest_{(n)}$, hence we only need that
\begin{equ}\label{e:coproductcondition}
(\psi\otimest\psi)\Delta h  = \Deltabar \psi(h)\;, 
\end{equ}
for all $h \in \forest_{(n)}$. For $\tau \in \trees_n$, we have that
\begin{equ}\label{e:psi1}
\Deltabar \psi(\tau) = \Deltabar(\psi(\tau^1)\otimes \tau^2 + \tau) = \Deltabar(\psi(\tau_1)\otimes \tau^2) + \tau\otimestilde 1 + 1\otimestilde \tau\;.
\end{equ}
It is easy to see that 
\begin{equ}
\Deltabar(\psi(\tau^1)\otimes \tau^2) = (\psi(\tau^1)\otimes \tau^2)\otimestilde 1+ (\Deltabar\psi(\tau^1))\otimes (1\otimestilde\tau^2) \;.
\end{equ}
Since $\tau^1 \in \forest_{(n-1)}$, the inductive hypothesis implies that \eqref{e:psi1} equals
\begin{equ}
(\psi(\tau^1)\otimes \tau^2 + \tau)\otimestilde 1 + (\psi\otimestilde \psi)(\Delta \tau^1)\otimes(1\otimestilde \tau^2) + 1\otimestilde \tau\;.
\end{equ}
Using the notation, $(\Delta' \otimestilde \Id)\Delta' \tau  = \tau^{11} \otimestilde \tau^{12} \otimestilde \tau^{2}$, the above equals
\begin{equ}\label{e:psi2}
\psi(\tau)\otimestilde 1 + \psi(\tau^{11})\otimestilde (\psi(\tau^{12})\otimes \tau^2) + 1\otimestilde (\psi(\tau^1)\otimes \tau^2) + \psi(\tau^1)\otimestilde \tau^2 + 1\otimestilde \tau\;.
\end{equ}
On the other hand, using the notation $(\Id \otimestilde\Delta' )\Delta' \tau = \tau^{1}\otimestilde \tau^{21}\otimestilde \tau^{22}$, we have that
\begin{equ}
(\psi\otimestilde \psi) \Delta \tau = \psi(\tau)\otimestilde 1 + 1\otimestilde \psi(\tau) + \psi(\tau^1)\otimestilde (\psi(\tau^{21})\otimes \tau^{22}+\tau^2)\;.
\end{equ}
Hence, it is sufficient to check that
\begin{equ}\label{e:psi3}
\psi(\tau^{11})\otimestilde (\psi(\tau^{12})\otimes \tau^2) = \psi(\tau^1)\otimestilde \psi(\tau^{21})\otimes \tau^{22}\;.
\end{equ}
But, from the coassociativity of the coproduct (and hence the reduced coproduct), we have that 
\begin{equ}
\tau^{11}\otimestilde \tau^{12}\otimestilde \tau^2 = (\Delta' \otimestilde \Id)\Delta' \tau = (\Id \otimestilde\Delta' )\Delta' \tau = \tau^{1}\otimestilde \tau^{21}\otimestilde \tau^{22}
\end{equ}
and \eqref{e:psi3} clearly follows. The fact that \eqref{e:coproductcondition} holds for the product $h_1 h_2$ follows easily from the inductive hypothesis, and the fact that $\Deltabar$ and $\Delta$ are morphisms with respect to $\shuffle$ and $\cdot$ respectively.  
\end{proof}

We can now state the main result of this section. 
\begin{thm}\label{thm:branchedgeometric}
Let $X=(X^i)_{i=1\dots d}$ be a path in $\reals^d$ and  $\X$ a $\gamma$-H\"older continuous branched rough path satisfying $\innerprod{\X_{t},\bullet_{i}}=X^i_t$. Then there exists 
\begin{enumerate}
\item a path $\xbar = (\xbar^\tau)_{\tau\in\trees_N}$ taking values in the vector space $\B_N$ and satisfying $\pi_{\B_1}(\xbar)=X$\;,
\item a $\gamma$-H\"older geometric rough path $\Xbar$ in $T^{(N)}(\B_N)$ satisfying $\innerprod{\Xbar_{st},\tau} = \delta\xbar^\tau_{st}$ for each $\tau \in \trees_N$\;,
\end{enumerate}
such that
\begin{equ}\label{e:psiX}
\innerprod{\X_{st},h} = \innerprod{\Xbar_{st},\psi(h)}\;,
\end{equ}
for every $h \in \hopf_N$ and where $\psi$ is the map constructed in Lemma \ref{lem:psiexist}. 
\end{thm}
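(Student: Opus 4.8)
The strategy is to push the branched rough path $\X$ forward through the morphism $\psi$ to obtain a functional on $T(\B_N)$, check that this functional is group-like with respect to the shuffle structure (so that it genuinely is a geometric rough path candidate), isolate its ``top layer'' along the trees $\trees_N$ to define the path $\xbar$, and then invoke the Lyons--Victoir extension theorem to promote $\xbar$ to a full geometric rough path $\Xbar$ that agrees with the pushforward on all of $\psi(\hopf_N)$.

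First I would dualise $\psi$. Since $\psi:\hopf_{(m)}\to T(\B)_{(m)}$ is a graded Hopf algebra morphism from $(\hopf,\cdot,\Delta)$ to $(T(\B),\shuffle,\Deltabar)$, its adjoint $\psi^*:T((\B))\to\hopf^*$ is a morphism the other way, intertwining $\star$ (convolution in $\hopf^*$, dual to $\Delta$) with the concatenation product dual to $\Deltabar$, and intertwining the respective coproducts. Truncating at level $N$ and using that $\psi$ preserves the grading, $\psi^*$ restricts to a map $T^{(N)}((\B_N))\to\hopf_N^*$ which sends group-like elements to group-like elements and is compatible with inverses (the antipodes correspond). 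Because $\psi(\tau)=\tau+\psi_{n-1}(\tau)$ with $\psi_{n-1}(\tau)$ supported on strictly lower grading, $\psi^*$ is ``triangular'': it is surjective onto $\hopf_N^*$ when restricted to the linear span of trees inside $T^{(N)}(\B_N)$. This triangularity is what will let us prescribe the tree-components of $\Xbar$ freely and solve for everything else. Concretely, given $\X$, I want to find $\Xbar:[0,T]\to G^{(N)}(\B_N)$ with $\psi^*(\Xbar_{st})=\X_{st}$ for all $s,t$, equivalently $\innerprod{\Xbar_{st},\psi(h)}=\innerprod{\X_{st},h}$ for all $h\in\hopf_N$.

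The construction of $\xbar$ comes from reading \eqref{e:psiX} at the basis trees: since $\psi(\tau)=\tau+(\text{lower order})$, the requirement $\innerprod{\Xbar_{st},\tau}=\delta\xbar^\tau_{st}$ together with $\innerprod{\X_{st},h}=\innerprod{\Xbar_{st},\psi(h)}$ recursively forces, for $\tau\in\trees_{(n)}$,
\begin{equ}
\delta\xbar^\tau_{st}=\innerprod{\X_{st},\tau}-\innerprod{\Xbar_{st},\psi_{n-1}(\tau)}\;,
\end{equ}
where the right-hand side involves only components of $\Xbar_{st}$ of grading $<n$, which by induction are already determined by $\X$ (they are polynomial expressions in lower-order components of $\X$, coming from the Chen/multiplicativity relations and the definition \eqref{e:psi_definition} of $\psi_{n-1}$). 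One checks that the right-hand side is genuinely an increment (i.e. satisfies $\delta_{su}+\delta_{ut}=\delta_{st}$) using that both $\X$ and the already-constructed lower layers of $\Xbar$ satisfy the multiplicative property, and that it is $\gamma|\tau|$-Hölder; integrating gives a well-defined path $\xbar^\tau$, with $\xbar^{\bullet_i}=X^i$ since $\psi(\bullet_i)=\bullet_i$. This defines $\xbar$ taking values in $\B_N$ with $\pi_{\B_1}(\xbar)=X$.

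It remains to produce the geometric rough path $\Xbar$ itself. The natural candidate is to define, for each $s,t$, the element $\Xbar^{(0)}_{st}\in T^{(N)}((\B_N))$ whose pairing against $\psi(h)$ is $\innerprod{\X_{st},h}$ — this is well-defined on the subspace $\psi(\hopf_N)\subseteq T^{(N)}(\B_N)$, and I would check it is group-like there and $\gamma$-Hölder. The subspace $\psi(\hopf_N)$ need not be all of $T^{(N)}(\B_N)$, so $\Xbar^{(0)}$ lives naturally on a quotient: equivalently, $t\mapsto\Xbar^{(0)}_{0t}$ is a $\gamma$-Hölder path in $G^{(N)}(\B_N)/K$ for a suitable normal subgroup $K$ (the annihilator, on the group level, of $\psi(\hopf_N)$). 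Here one must verify that $\gamma^{-1}\notin\naturals$ (this is the standing assumption noted after Theorem~\ref{thm:extension}), so the Lyons--Victoir extension theorem, Theorem~\ref{thm:extension}, applies and yields a genuine $\gamma$-Hölder geometric rough path $\Xbar$ in $G^{(N)}(\B_N)\subset T^{(N)}(\B_N)$ projecting onto $\Xbar^{(0)}$. Then \eqref{e:psiX} holds by construction, the components $\innerprod{\Xbar_{st},\tau}$ agree with the $\xbar^\tau$ defined above (they both sit in the $K$-invariant part since trees are not identified by $\psi$'s triangularity — this needs a small check that $\psi^*$ restricted to $\B_N\subset T^{(N)}(\B_N)$ is injective, which follows from $\psi(\tau)=\tau+\dots$), and $\pi_{\B_1}(\xbar)=X$ as noted.

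I expect the main obstacle to be the bookkeeping that identifies the correct normal subgroup $K$ and verifies that $\Xbar^{(0)}$ is a well-defined $\gamma$-Hölder path in the quotient group $G^{(N)}(\B_N)/K$ — i.e. that the group-like property and the Hölder bound genuinely descend, and that the Lie-theoretic hypotheses of Theorem~\ref{thm:extension} are met. The algebraic heart, that $\psi$ being a Hopf algebra morphism makes $\psi^*$ intertwine $\star$ with concatenation and hence makes the pushforward automatically satisfy Chen's relation and the shuffle relation, is essentially immediate once Lemma~\ref{lem:psiexist} is in hand; the analytic points (increments are increments, Hölder regularity of each layer) are routine inductions using the grading.
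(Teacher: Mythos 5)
There is a genuine gap in the middle of your argument that undermines the structure. When you write that in the formula
\begin{equ}
\delta\xbar^\tau_{st}=\innerprod{\X_{st},\tau}-\innerprod{\Xbar_{st},\psi_{n-1}(\tau)}
\end{equ}
``the right-hand side involves only components of $\Xbar_{st}$ of grading $<n$, which by induction are already determined by $\X$,'' this is false. The components $\innerprod{\Xbar_{st},\tau_1\otimes\dots\otimes\tau_k}$ with $\sum|\tau_i|<n$ that appear in $\psi_{n-1}(\tau)$ are \emph{not} in the image of $\psi$ in general, hence not pinned down by the relation $\innerprod{\Xbar_{st},\psi(h)}=\innerprod{\X_{st},h}$. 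Already at the first non-trivial level: $\psi(\treeoneone\,)=\treeoneone+\bullet\otimes\bullet$, so the constraint from $\X$ only gives the \emph{sum} $\innerprod{\Xbar_{st},\treeoneone\,}+\innerprod{\Xbar_{st},\bullet\otimes\bullet}$, while $\innerprod{\Xbar_{st},\bullet\otimes\bullet}$ is precisely the non-unique Lévy-area choice made in the Lyons--Victoir extension over $X$. Thus $\xbar^\tau$ cannot be defined from $\X$ alone; it depends on the already-chosen lower layers of $\Xbar$. This is exactly why the paper does a genuine \emph{layered} induction, applying Lyons--Victoir $N$ times: at step $n$ one first obtains $\Xbar^{(n)}$ in $G^{(N)}(\B_n)$ by extension (quotienting out $K_n=\exp L_n$ with $L_n$ the Lie ideal generated by brackets with a factor in $\B_{(n)}$), \emph{then} uses $\Xbar^{(n)}$ to define $\xbar^\tau$ for $\tau\in\trees_{(n+1)}$, builds the intermediate lift $\Xhat^{(n+1)}$ by a BCH/exponential formula, verifies Hölder regularity in $G^{(N)}(\B_{n+1})/K_{n+1}$, and extends again.

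Your final-paragraph alternative — define $\Xbar^{(0)}_{st}$ on the sub-Hopf-algebra $\psi(\hopf_N)$ by duality, view it as a path in $G^{(N)}(\B_N)/K$ for $K$ the kernel of the character-restriction map, and apply Lyons--Victoir once — is a coherent and genuinely different strategy that avoids the circularity. However, you correctly flag but do not carry out the crux: showing that $t\mapsto\Xbar^{(0)}_{0t}$ is $\gamma$-H\"older for the quotient homogeneous norm. That norm is an infimum over lifts to $G^{(N)}(\B_N)$, and exhibiting a good lift is precisely the content of the paper's Lemma~\ref{lem:quotient_path}, which is proved grade by grade via the explicit intermediate extensions $\Xhat^{(n+1)}$ and BCH. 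Since $K$ in your version is not presented as the exponential of a transparent Lie ideal, the estimate is less tractable than in the paper, and I would expect carrying it out to reproduce the layered structure anyway. So while the one-shot version is not wrong in principle, the proposal as written neither supplies that estimate nor gives a way to define $\xbar$ independently of $\Xbar$, and the erroneous ``determined by $\X$'' claim is what lets both gaps go unnoticed.
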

The idea behind the proof is to construct $\Xbar$ iteratively, using the extension theorem \ref{thm:extension}. The first part of the iteration is to extend the path $X$. To start the iteration, we define the intermediate extension $\Xhat^{(1)} : [0,T] \to  G^{(N)}(\B_1)$ by  
\begin{equ}\label{e:xhat1_def}
\Xhat^{(1)}_{t} = \exp \left( \sum_{i=1}^d \innerprod{\X_{t},\bullet_i}\;\bullet_i \right).
\end{equ}
Hence, we have that $\innerprod{\Xhat_{t}^{(1)},\bullet_i} = \innerprod{\X_{t},\bullet_i}$ and 
\begin{equ}
\innerprod{\Xhat_{t}^{(1)},\bullet_{a_1}\otimes \dots \otimes \bullet_{a_k}} = \frac{1}{k!} \innerprod{\X_{t},\bullet_{a_1} \dots \bullet_{a_k}} \;,
\end{equ} 
for $a_i=1\dots d$ and $k\leq N$. All we have done is extend $X$ by adding the purely symmetric tensor components. Let $K_1$ be the normal subgroup of $G^{(N)}(\B_1)$ defined by 
\begin{equ}
K_1 = \exp \left(W_2(\B_1)\oplus  \dots \oplus W_N(\B_1) \right)\;,
\end{equ}
or equivalently, let $K_1 = \exp L_1$, where $L_1$ is the Lie ideal generated by 
\begin{equ}[]
[\B_1,\B_1]_{\otimes} \defin \spn\{ \bullet_i \otimes \bullet_j - \bullet_j \otimes \bullet_i : i,j=1 \dots d\}. 
\end{equ}
In general, the path $\Xhat^{(1)}_t$ is not a $\gamma$-H\"older continuous path in the group $G^{(N)}(\B_1)$, but it is in the quotient group $G^{(N)}(\B_1)/K_1$. Indeed, we have that 
\begin{equ}\label{e:xhat1_norm}
\norm{(\Xhat^{(1)}_{s})^{-1}\otimes \Xhat^{(1)}_{t}}_{G^{(N)}(\B_1)/K_1} = \inf_{ k \in K_1} \norm{(\Xhat^{(1)}_{s})^{-1}\otimes \Xhat^{(1)}_{t}\otimes k}_{G^{(N)}(\B_1)} \;,
\end{equ}
and by the Baker-Campbell-Hausdorff formula, 
\begin{align*}
(\Xhat^{(1)}_{s})^{-1}\otimes \Xhat^{(1)}_{t} &= \exp \left(\sum_{i=1}^d\big(\inner{\X_{t},\bullet_{i}} - \inner{\X_{s},\bullet_{i}}\big)\bullet_i  \right)\otimes \exp (\ell)\\ 
&= \exp \left( \sum_{i=1}^d\inner{\X_{st},\bullet_{i}}\bullet_i \right)\otimes \exp (\ell)\;,
\end{align*}
where $\ell \in L_1$. Hence, taking $k=\exp(-\ell)$, we can bound \eqref{e:xhat1_norm} by
\begin{equ}
\norm{\exp \left( \sum_{i=1}^d\inner{\X_{st},\bullet_{i}}\bullet_i \right)}_{G^{(N)}(\B_1)} \leq C \sum_{i=1}^d |\innerprod{\X_{st},\bullet_i}| \leq C|t-s|^\gamma\;,
\end{equ}
which proves the claim for $\Xhat^{(1)}$. We can therefore apply the extension theorem to $\Xhat^{(1)}$, in particular it follows that there exists a $\gamma$-H\"older continuous path $\Xbar^{(1)} \in G^{(N)}(\B_1)$ such that 
\begin{equ}
\pi_{G^{(N)}(\B_1)/K_1} ( \Xbar^{(1)} ) = \Xhat^{(1)}\;,
\end{equ}
which simply means that $\inner{\Xbar^{(1)}_{st},\bullet_i} = \inner{\Xhat^{(1)}_{st},\bullet_i} = \delta X_{st}$ for all $i=1\dots d$.
\begin{rmk}
We should mention that one can actually choose \emph{any} geometric rough path $\Xbar^{(1)}$ above $X$. We only use the choice of $\Xbar^{(1)}$ provided by the extension theorem as it will work for every $X$. 
\end{rmk}

The second part of the iteration relies on a generalisation of the following well-known (and easily verified) fact. Namely, that the difference between two area processes over a common path is equal to the increment of another path. In our case, for each $a,b=1\dots d$ there exists a path 
\begin{equ}\label{e:xbar_condition}
\xbar^{\mytreeoneone{a}{b}}\;\; : [0,T]\to \reals \quad \text{such that} \quad\delta \xbar^{\mytreeoneone{a}{b}}_{st} =  \innerprod{\X_{st} ,\mytreeoneone{a}{b}\;\;} - \innerprod{\Xbar^{(1)}_{st},\mytreeone{a}\;\;\otimes \mytreeone{b}\;\;}, 
\end{equ}
where $\Xbar^{(1)}_{st} = (\Xbar^{(1)}_{s})^{-1}\otimes\Xbar^{(1)}_t$ and the path is unique up to an additive constant. We add $\xbar$ as another component of a new path $\Xhat^{(2)}: [0,T] \to T^{(N)}(\B_2)$. To be precise, we define
\begin{equ}
\Xhat^{(2)}_{t} = \exp \left( \log \Xbar^{(1)}_{t} + \sum_{a,b=1}^d \xbar^{\mytreeoneone{a}{b}}_{t}\;\; \mytreeoneone{a}{b}\;\; \right)\;.
\end{equ}
Hence, $\Xhat^{(2)}$ satisfies
\begin{equ}\label{e:xhat2}
\innerprod{\Xhat^{(2)}_{t} ,\mytreeoneone{a}{b}\;\;} = \xbar^{\mytreeoneone{a}{b}}_{t}\;,
\end{equ}
for all $a,b=1\dots d$,
\begin{equ}
\innerprod{\Xhat_{t}^{(2)},\tau_1 \otimes \dots \otimes \tau_k} = \frac{1}{k!}\innerprod{\Xhat_{t}^{(2)},\tau_1} \dots \innerprod{\Xhat_{t}^{(2)},\tau_k}\;,
\end{equ}
for all tensors $\tau_1 \otimes \dots \otimes \tau_k \in T^{(N)}(\B_2) \setminus T^{(N)}(\B_1) $, and $\Xhat^{(2)}$ is an extension of $\Xbar_1$, in the sense that 
\begin{equ}
\pi_{T^{(N)}(\B_1)}(\Xhat^{(2)}) = \Xbar^{(1)}\;. 
\end{equ}
We then repeat the first step, by finding the right quotient group and re-applying the extension theorem. To this end, for any integer $1\leq n \leq N$, we define $L_n$ as the Lie ideal generated by the set 
\begin{equ}[]
[\B_{(n)},\B_n]_\otimes \defin \spn \{ \tau_1 \otimes \tau_2 - \tau_2 \otimes \tau_1 : \text{$\tau_1,\tau_2 \in \trees$ with $|\tau_1|=n$ and $|\tau_2|\leq n$ } \}
\end{equ}
in the free Lie algebra $\G^{(N)}(\B_n)$. In particular, $L_n$ contains all brackets in $\G^{(N)}(\B_n)$ with at least one factor from $\B_{(n)}$. In order to construct meaningful quotients, we require the following Lemma.   
\begin{lemma}
For each $1\leq n \leq N$, $K_n = \exp(L_n)$ is a normal subgroup of $G^{(N)}(\B_n)$.
\end{lemma}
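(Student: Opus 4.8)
The plan is to appeal to the standard fact that, for a nilpotent Lie algebra $\mathfrak{g}$ realised inside an associative algebra with $G=\exp(\mathfrak{g})$, the exponential of any Lie ideal of $\mathfrak{g}$ is a normal subgroup of $G$. Everything is legitimate in our truncated setting because $\G^{(N)}(\B_n)$ is nilpotent of step at most $N$: any iterated $[\cdot,\cdot]_\otimes$-bracket of more than $N$ elements of $\B_n$ vanishes in $T^{(N)}(\B_n)$, so the Baker--Campbell--Hausdorff series and the series for the adjoint action appearing below are all \emph{finite} sums, and $\exp:\G^{(N)}(\B_n)\to G^{(N)}(\B_n)$ is a bijection with polynomial inverse $\log$. (Recall here that the bracket on $\G^{(N)}(\B_n)$ is the one inherited from $T^{(N)}(\B_n)$, so components of degree exceeding $N$ are simply discarded; this is exactly what makes $\G^{(N)}(\B_n)$ nilpotent.)

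First I would record that $L_n$ is, by construction, a Lie \emph{ideal} of $\G^{(N)}(\B_n)$, so in particular $[L_n,\G^{(N)}(\B_n)]_\otimes \subseteq L_n$ and $L_n$ is a Lie subalgebra. To see that $K_n=\exp(L_n)$ is a subgroup of $G^{(N)}(\B_n)$, take $x,y\in L_n$; the Baker--Campbell--Hausdorff formula gives $\exp(x)\otimes\exp(y)=\exp\big(\mathrm{BCH}(x,y)\big)$, where $\mathrm{BCH}(x,y)=x+y+\tfrac12[x,y]_\otimes+\cdots$ is a finite linear combination of iterated $[\cdot,\cdot]_\otimes$-brackets of $x$ and $y$ and therefore lies in $L_n$. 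Hence $\exp(x)\otimes\exp(y)\in K_n$; together with $\exp(x)^{-1}=\exp(-x)\in K_n$ and $1=\exp(0)\in K_n$, this shows $K_n$ is a subgroup.

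For normality, write an arbitrary $g\in G^{(N)}(\B_n)$ as $g=\exp(y)$ with $y\in\G^{(N)}(\B_n)$, and let $k=\exp(x)\in K_n$ with $x\in L_n$. Expanding the exponential series and using $g\otimes x^{\otimes j}\otimes g^{-1}=(g\otimes x\otimes g^{-1})^{\otimes j}$ gives
$$g\otimes k\otimes g^{-1}=\exp\big(\mathrm{Ad}_g(x)\big)\;,\qquad \mathrm{Ad}_g(x)\defin g\otimes x\otimes g^{-1}\;.$$
The identity $\mathrm{Ad}_{\exp(y)}=e^{\mathrm{ad}_y}$, with $\mathrm{ad}_y(\cdot)=[y,\cdot]_\otimes$, then yields
$$\mathrm{Ad}_g(x)=\sum_{j\ge 0}\frac{1}{j!}\,(\mathrm{ad}_y)^j(x)\;,$$
again a finite sum, and since $L_n$ is an ideal each term $(\mathrm{ad}_y)^j(x)$ lies in $L_n$. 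Hence $\mathrm{Ad}_g(x)\in L_n$ and $g\otimes k\otimes g^{-1}\in\exp(L_n)=K_n$, so $K_n$ is normal in $G^{(N)}(\B_n)$.

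The argument involves no real obstacle: the only points needing attention are the validity of the Baker--Campbell--Hausdorff identity and of $\mathrm{Ad}_{\exp(y)}=e^{\mathrm{ad}_y}$ inside the truncated tensor algebra $T^{(N)}(\B_n)$, both of which follow immediately from the nilpotency that makes all the relevant series terminate.
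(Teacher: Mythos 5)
Your argument is correct and is precisely the standard Lie-theoretic fact (exponential of a Lie ideal in a nilpotent Lie algebra is a normal subgroup) that the paper simply cites from the literature without reproducing the proof. You have essentially written out the details behind that citation, using the Baker--Campbell--Hausdorff formula for the subgroup property and $\mathrm{Ad}_{\exp(y)}=e^{\mathrm{ad}_y}$ for normality, both of which indeed terminate in the truncated (hence nilpotent) setting.
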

\begin{proof}
The statement is an elementary result in the theory of Lie algebras, see \cite[Theorem 3.22]{kirillov}, for instance.  
\end{proof}
%
After this step, we will obtain a geometric rough path $\Xbar^{(2)}$ above the path 
\begin{equ}
\xbar^{(2)} = (X^i, \xbar^{\mytreeoneone{j}{k}} \;\;: i,j,k=1\dots d)\;,
\end{equ}
and from \eqref{e:xbar_condition}, it follows that $\Xbar^{(2)}$ contains the information held in the components $\inner{\X,\tau}$ for all $\tau \in \trees_2$. Hence, if we repeat this procedure, we eventually obtain a geometric rough path $\Xbar\defin\Xbar^{(N)}$, containing  \emph{all} the information of held in the components of $\X$.  
\begin{proof}[Proof of Theorem \ref{thm:branchedgeometric}]
Throughout the proof, we will denote $\Xbar^{(n)}_{st} = (\Xbar^{(n)}_{s})^{-1}\otimes \Xbar^{(n)}_{t}$. Proceeding by induction, we will prove that, for each integer $n\geq 1$, there exists a $\gamma$-H\"older continuous path $\Xbar^{(n)}: [0,T] \to G^{(N)}(\B_n)$ such that   
\begin{equ}\label{e:psiclaim}
\innerprod{\X_{st},h} = \innerprod{\Xbar^{(n)}_{st},\psi(h)}\;,
\end{equ}
for every $h \in \hopf_n$. For $n=1$, we know from the introductory argument that such a construction is possible. Hence, assume the claim holds for some $n\geq 1$. We will now construct $\Xbar^{(n+1)}$ and show that \eqref{e:psiclaim} holds in the $n+1$ case. We will first show that for every $\tau\in \trees_{(n+1)}$ there exists a path $\xbar^{\tau}: [0,T] \to \reals$ such that 
\begin{equ}\label{e:xbarh}
\delta \xbar^{\tau}_{st} =  \innerprod{\X_{st},\tau}-\innerprod{\Xbar^{(n)}_{st},\psi_n(\tau)}\;,
\end{equ}
unique up to an additive constant, this path will allow us to define $\Xbar^{(n+1)}$. 
Without loss of generality, let $\tau = [h]$, for some $h\in \forest_{(n)}$, where we omit the label of the root. We have that 
\begin{equ}\label{e:Xhclaim}
\innerprod{\X_{st},[h]} = \innerprod{\X_{su},[h]} + \innerprod{\X_{ut},[h]} + \innerprod{\X_{su},h^{1}}\innerprod{\X_{ut},[h^{2}]}  \;,
\end{equ}
where $\Delta h = h^{1}\otimestilde h^{2}  + 1\otimestilde h + h \otimestilde 1 $ and we omit the summation. By hypothesis, we have that 
\begin{equ}
\innerprod{\X_{su},h^{1}}\innerprod{\X_{ut},[h^{2}]} = \innerprod{\Xbar^{(n)}_{su},\psi(h^{1})}\innerprod{\Xbar^{(n)}_{ut},\psi([h^{2}])}\;,
\end{equ}
since $h^{1}$ and $[h^{2}]$ are elements of $\hopf_n$. Moreover, by definition of $\psi$, we have that 
\begin{equ}
\psi(h^{1})\otimestilde \psi([h^2]) = (\psi\otimestilde \psi) \Delta ' [h] = \Delta ' \psi([h]) \;,
\end{equ}
where $\Delta'$ is the reduced coproduct. This yields the identity
\begin{equ}
\innerprod{\X_{su},h^{1}}\innerprod{\X_{ut},[h^{2}]} = \innerprod{\Xbar^{(n)}_{st},\psi_n([h])} - \innerprod{\Xbar^{(n)}_{su},\psi_n([h])} - \innerprod{\Xbar^{(n)}_{ut},\psi_n([h])}\;,
\end{equ}
combining this with \eqref{e:Xhclaim}, we obtain
\begin{equ}
\innerprod{\X_{st},[h]} - \innerprod{\Xbar^{(n)}_{st},\psi_n([h])} =   \innerprod{\X_{su},[h]}-\innerprod{\Xbar^{(n)}_{su},\psi_n([h])} + \innerprod{\X_{ut},[h]} -\innerprod{\Xbar^{(n)}_{ut},\psi_n([h])}\;.
\end{equ}
Setting $\tau = [h]$, this implies the existence of $\xbar^{\tau}$ for each $\tau \in \trees_{(n+1)}$, satisfying \eqref{e:xbarh}. We include this path in our construction by defining the intermediate extension $\Xhat^{(n+1)}$ of $\Xbar^{(n)}$, setting
\begin{equ}\label{e:xhat_def}
\Xhat^{(n+1)}_{t} = \exp \left( \log \Xbar_{t}^{(n)} + \sum_{\tau \in \trees_{(n+1)}} \xbar^{\tau}_{t} \tau \right)\;.
\end{equ}
Hence, $\Xhat^{(n+1)}: [0,T] \to G^{(N)}(\B_{n+1})$ and satisfies $\innerprod{\Xhat^{(n+1)}_{t},\tau} = \xbar^\tau_{t}$ for all $\tau \in \trees_{(n+1)}$,
\begin{equ}
\innerprod{\Xhat^{(n+1)}_t,\tau_1\otimes\dots\otimes\tau_m} = \frac{1}{m!} \innerprod{\Xhat^{(n+1)}_{t},\tau_1}\dots \innerprod{\Xhat^{(n+1)}_{t},\tau_m} \;,
\end{equ}
for all $\tau_1\otimes \dots \otimes \tau_m \in T^{(N)}(\B_{n+1})\setminus T^{(N)}(\B_{n})$ and $\Xhat^{(n+1)}$ is an extension of $\Xbar^{(n)}$ is the sense that
\begin{equ}
\pi_{T^{(N)}(\B_{n})}(\Xhat^{(n+1)}) = \Xbar^{(n)}\;.
\end{equ}
We then have the following crucial fact, which we shall verify in the sequel. 
\begin{lemma}\label{lem:quotient_path}
For each $n\leq N-1$, the intermediate extension $\Xhat^{(n+1)}$ is a $\gamma$-H\"older continuous path in the quotient group $G^{(N)}(\B_{n+1})/K_{n+1}$. 
\end{lemma}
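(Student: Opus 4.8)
The plan is to control the increment $(\Xhat^{(n+1)}_s)^{-1}\otimes\Xhat^{(n+1)}_t$ directly, using the Baker--Campbell--Hausdorff ($\mathrm{BCH}$) formula to reduce it, modulo $L_{n+1}$, to something built out of $\Xbar^{(n)}$ and the new scalar paths $\xbar^\tau$, and then to estimate a convenient representative with the homogeneous norm. Write $w_t \defin \sum_{\tau\in\trees_{(n+1)}} \xbar^\tau_t\,\tau \in \B_{(n+1)}$, so that $\log\Xhat^{(n+1)}_t = \log\Xbar^{(n)}_t + w_t$ by \eqref{e:xhat_def}. The first step is the analytic bound on $w_t - w_s$: by \eqref{e:xbarh} we have $\delta\xbar^\tau_{st} = \innerprod{\X_{st},\tau} - \innerprod{\Xbar^{(n)}_{st},\psi_n(\tau)}$, where $|\tau| = n+1\geq 2$, so the branched rough path regularity \eqref{e:Gnorm} bounds the first term by $C|t-s|^{(n+1)\gamma}$, while $\psi_n(\tau)$ is, by the construction in Lemma~\ref{lem:psiexist}, a linear combination of tensors $\tau_1\otimes\dots\otimes\tau_k \in T(\B_n)_{(n+1)}$ of length $k\geq 2$, so the geometric regularity of $\Xbar^{(n)}$ in $G^{(N)}(\B_n)$ bounds the second term by $C|t-s|^{2\gamma}$. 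Hence $\norm{w_t - w_s} \leq C|t-s|^{2\gamma}$.

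Next I would carry out the $\mathrm{BCH}$ reduction. Since $L_{n+1}$ is a Lie ideal of $\G^{(N)}(\B_{n+1})$ containing every bracket with at least one factor in $\B_{(n+1)}$, and $w_s, w_t \in \B_{(n+1)}$, every iterated bracket appearing in the $\mathrm{BCH}$ expansion of $(\Xhat^{(n+1)}_s)^{-1}\otimes\Xhat^{(n+1)}_t = \exp\big(\mathrm{BCH}(-\log\Xbar^{(n)}_s - w_s,\, \log\Xbar^{(n)}_t + w_t)\big)$ that involves $w_s$ or $w_t$ already lies in $L_{n+1}$ (and stays there under further bracketing); the only surviving contributions of $w_s, w_t$ are the two linear terms $-w_s$ and $+w_t$, and the $w$-free part is exactly $\mathrm{BCH}(-\log\Xbar^{(n)}_s, \log\Xbar^{(n)}_t) = \log\big((\Xbar^{(n)}_s)^{-1}\otimes\Xbar^{(n)}_t\big)$. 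Therefore
\begin{equ}
\log\big((\Xhat^{(n+1)}_s)^{-1}\otimes\Xhat^{(n+1)}_t\big) \equiv \log\big((\Xbar^{(n)}_s)^{-1}\otimes\Xbar^{(n)}_t\big) + (w_t - w_s) \pmod{L_{n+1}}\;.
\end{equ}
Since $\exp$ intertwines the quotient of $\G^{(N)}(\B_{n+1})$ by $L_{n+1}$ with the quotient of $G^{(N)}(\B_{n+1})$ by $K_{n+1}$, this shows that $(\Xhat^{(n+1)}_s)^{-1}\otimes\Xhat^{(n+1)}_t$ has the same image in $G^{(N)}(\B_{n+1})/K_{n+1}$ as $\exp(A)$, where $A \defin \log\big((\Xbar^{(n)}_s)^{-1}\otimes\Xbar^{(n)}_t\big) + (w_t - w_s) \in \G^{(N)}(\B_{n+1})$.

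Finally I would estimate $\exp(A)$ in the homogeneous norm on $G^{(N)}(\B_{n+1})$, which by \cite{lyons07} is equivalent to $\rho(\exp A) = \sum_{k=1}^N \norm{\ell_k}^{1/k}$ for $A = \ell_1 + \dots + \ell_N$ with $\ell_k \in W_k(\B_{n+1})$. The inductive hypothesis that $\Xbar^{(n)}$ is $\gamma$-H\"older in $G^{(N)}(\B_n)$ gives $\log\big((\Xbar^{(n)}_s)^{-1}\otimes\Xbar^{(n)}_t\big) = \sum_k \ell'_k$ with $\ell'_k \in W_k(\B_n) \subseteq W_k(\B_{n+1})$ and $\norm{\ell'_k} \leq C|t-s|^{k\gamma}$; since $w_t - w_s \in \B_{n+1} = W_1(\B_{n+1})$ the decomposition of $A$ is $\ell_1 = \ell'_1 + (w_t - w_s)$ and $\ell_k = \ell'_k$ for $k\geq 2$, so $\norm{\ell_1} \leq C(|t-s|^\gamma + |t-s|^{2\gamma})$ and $\norm{\ell_k}\leq C|t-s|^{k\gamma}$. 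Hence $\rho(\exp A)\leq C|t-s|^\gamma$ on $[0,T]$ (absorbing $|t-s|^{2\gamma}\leq T^\gamma|t-s|^\gamma$), and therefore
\begin{equ}
\norm{(\Xhat^{(n+1)}_s)^{-1}\otimes\Xhat^{(n+1)}_t}_{G^{(N)}(\B_{n+1})/K_{n+1}} = \norm{\exp A}_{G^{(N)}(\B_{n+1})/K_{n+1}} \leq \norm{\exp A}_{G^{(N)}(\B_{n+1})} \leq C|t-s|^\gamma\;,
\end{equ}
which is the claim. The step I expect to be the main obstacle is the $\mathrm{BCH}$ reduction: one must argue carefully that once $w_s$ or $w_t$ enters a bracket the whole term is trapped in the ideal $L_{n+1}$, so that modulo $L_{n+1}$ the $\mathrm{BCH}$ series collapses to $\log\big((\Xbar^{(n)}_s)^{-1}\otimes\Xbar^{(n)}_t\big) + (w_t - w_s)$; everything else is bookkeeping with homogeneous norms.
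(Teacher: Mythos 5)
Your argument is correct and follows essentially the same route as the paper: both invoke BCH to reduce the increment modulo $L_{n+1}$ to a representative built from $\Xbar^{(n)}_{st}$ and the new scalar increments $w_t-w_s$, and both then bound a convenient representative in the homogeneous norm. The only cosmetic differences are that the paper keeps the representative factored as a group product $\Xbar^{(n)}_{st}\otimes\exp(w_t-w_s)$ and invokes subadditivity of the homogeneous norm, whereas you work at the Lie algebra level with the degree decomposition of $\log\Xbar^{(n)}_{st}+(w_t-w_s)$ directly; your sharper bound $\|w_t-w_s\|\leq C|t-s|^{2\gamma}$ (obtained by observing that $\psi_n(\tau)$ has no tensors of length one) is a nice refinement but is not needed, since the paper's cruder bound $C|t-s|^\gamma$ already suffices.
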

Thus, from the extension  theorem \ref{thm:extension}, we know that there exists a $\gamma$-H\"older path $\Xbar^{(n+1)}: [0,T] \to G^{(N)}(\B_{n+1})$ satisfying
\begin{equ}
\pi_{G^{(N)}(\B_{n+1})/K_{n+1}}(\Xbar^{(n+1)}) = \Xhat^{(n+1)}\;.
\end{equ}
We will now check that $\Xbar^{(n+1)}$ satisfies \eqref{e:psiclaim} for the basis elements $\forest_{n+1}$ and hence $\hopf_{n+1}$. Firstly, suppose $h \in \forest_n$, then $\psi(h) \in  T^{(N)}(\B_n)$, which follows from the fact that $\psi$ is graded. Moreover, since $\Xbar^{(n+1)}$ agrees with $\Xbar^{(n)}$ on $T^{(N)}(\B_n)$, we have that 
\begin{equ}
\inner{\X_{st},h} = \inner{\Xbar^{(n)}_{st},\psi(h)} = \inner{\Xbar^{(n+1)}_{st},\psi(h)}\;, 
\end{equ}
which proves the claim for $\forest_n$. It is clear that every element in $\forest_{(n+1)}$ is either a tree $[h]$ for some $h\in\forest_{(n)}$ or a  product $h_1 h_2$ for $h_1, h_2 \in \forest_n$. For the tree case, we have the identity
\begin{equ}\label{e:xbar_proof_1}
\innerprod{\Xbar^{(n+1)}_{st},[h]} = \innerprod{(\Xbar^{(n+1)}_{s})^{-1}\otimes \Xbar^{(n+1)}_{t},[h]} = \innerprod{(\Xhat^{(n+1)}_{s})^{-1}\otimes \Xhat^{(n+1)}_{t},[h]} = \delta \xbar^{[h]}_{st} \;,
\end{equ}
where we have used the facts that $\Xbar^{(n+1)}$ and $\Xhat^{(n+1)}$ coincide on $[h]$ and that $\inner{\Xhat^{(n+1)}_t,[h]} = \xbar^{[h]}_t$ and  $\inner{(\Xhat^{(n+1)}_s)^{-1},[h]} = -\xbar^{[h]}_s$. And by definition, 
\begin{equ}
 \delta \xbar^{[h]}_{st} =  \innerprod{\X_{st},[h]} -\innerprod{\Xbar^{(n)}_{st},\psi_n([h])}= \innerprod{\X_{st},[h]} -\innerprod{\Xbar^{(n+1)}_{st},\psi_n([h])}\;, 
\end{equ}
where the last equality follows from the fact that $\psi_n([h]) \in T^{(N)}(\B_n)$, on which $\Xbar^{(n+1)}$ and $\Xbar^{(n)}$ agree. Combining this with \eqref{e:xbar_proof_1}, the claim follows from the condition $\psi([h]) = [h] +  \psi_n ([h])$. For the product case,   
\begin{equ}
\innerprod{\X_{st},h_1 h_2} = \innerprod{\X_{st},h_1}\innerprod{\X_{st},h_2} = \innerprod{\Xbar^{(n)}_{st},\psi(h_1)}\innerprod{\Xbar^{(n)}_{st},\psi(h_2)}\;.
\end{equ}
Since $\Xbar^{(n)}$ is geometric, the above equals
\begin{equ}
\innerprod{\Xbar_{st}^{(n)},\psi(h_1) \shuffle \psi(h_2)} =  \innerprod{\Xbar^{(n+1)}_{st},\psi(h_1)\shuffle \psi( h_2)}  = \innerprod{\Xbar^{(n+1)}_{st},\psi(h_1 h_2)}\;.
\end{equ} 
where the first equailty follows from the fact that $\psi(h_1)\shuffle \psi(h_2) \in T^{(N)}(\B_n)$, on which $\Xbar^{(n)}$ and $\Xbar^{(n+1)}$ coincide and the second follows from the fact that $\psi$ is a morphism with respect to multiplication. 
\end{proof}
\begin{proof}[Proof of Lemma \ref{lem:quotient_path}] 
By the Baker-Campbell-Hausdorff formula, we have that
\begin{align*}
\Xhat^{(n+1)}_{st} : &= (\Xhat^{(n+1)}_s)^{-1}\otimes \Xhat^{(n+1)}_t\\
 &= \exp\bigg(\sum_{\tau \in \trees_{(n+1)}} -\xbar^{\tau}_{s} \tau  + \ell_1   \bigg) \otimes (\Xbar^{(n)}_s)^{-1} \otimes \Xbar^{(n)}_t \otimes \exp\bigg(\sum_{\tau \in \trees_{(n+1)}} \xbar^{\tau}_{t} \tau  + \ell_2   \bigg) \\
 &=\exp\bigg(\sum_{\tau \in \trees_{(n+1)}} -\xbar^{\tau}_{s} \tau  + \ell_1   \bigg) \otimes  \Xbar^{(n)}_{st} \otimes \exp\bigg(\sum_{\tau \in \trees_{(n+1)}} \xbar^{\tau}_{t} \tau  + \ell_2   \bigg)\\
&= \Xbar^{(n)}_{st}\otimes \exp\bigg(\sum_{\tau \in \trees_{(n+1)}} \delta \xbar^{\tau}_{st} \tau \bigg) \otimes \exp(\ell_3)\;,
\end{align*}
where $\ell_1,\ell_2$ are linear combinations of brackets between $\log \Xbar^{(n)}$ and $\trees_{(n+1)}$ and are therefore in the ideal $L_{n+1}$, and where $\ell_3$ is a linear combination of brackets between $\log\Xbar^{(n)}$, $\trees_{(n+1)}$, $\ell_1$ and $\ell_2$ and is therefore also in $L_{n+1}$. By taking $k=\exp(-\ell_3)$, we therefore have that
\begin{align*}
\norm{\Xhat^{(n+1)}_{st}}_{G^{(N)}(\B_{n+1})/K_{n+1}} &= \inf_{k\in K_{n+1}} \norm{\Xbar^{(n)}_{st}\otimes \exp \bigg(\sum_{\tau \in \trees_{(n+1)}} \delta \xbar^{\tau}_{st} \tau \bigg)\otimes \exp(\ell_3)\otimes k}_{G^{(N)}(\B_{n+1})}\\
& \leq \norm{\Xbar^{(n)}_{st}\otimes \exp \bigg(\sum_{\tau \in \trees_{(n+1)}} \delta \xbar^{\tau}_{st} \tau \bigg)}_{G^{(N)}(\B_{n+1})}\\
&\leq \norm{\Xbar^{(n)}_{st}}_{G^{(N)}(\B_{n+1})} + \norm{ \exp \bigg(\sum_{\tau \in \trees_{(n+1)}} \delta \xbar^{\tau}_{st} \tau \bigg)}_{G^{(N)}(\B_{n+1})}
\end{align*}
where in the last inequality we have used the sub-additivity property of $\norm{\cdot}_{G^{(N)}(\B_{n+1})}$. For the first term, using the equivalence of norms on $G^{(N)}(\B_{n})$, we have that
\begin{align*}
\norm{\Xbar^{(n)}_{st}}_{G^{(N)}(\B_{n+1})} &\leq C \sum_{m=1}^N \sum_{\{\tau_1,\dots,\tau_m\} \subset \trees_{n+1}} |\innerprod{\log\Xbar^{(n)}_{st},\tau_1 \otimes\dots\otimes \tau_m}|^{1/m}\\
 &= C \sum_{m=1}^N \sum_{\{\tau_1,\dots,\tau_m\} \subset \trees_{n}} |\innerprod{\log\Xbar^{(n)}_{st},\tau_1 \otimes\dots\otimes \tau_m}|^{1/m}\\ &\leq C \norm{\Xbar^{(n)}_{st}}_{G^{(N)}(\B_{n})} \leq C|t-s|^\gamma\;,
\end{align*}
since $\Xbar^{(n)}$ is a $\gamma$-H\"older continuous path in ${G^{(N)}(\B_{n})}$. For the second term, we have that 
\begin{equ}
\norm{ \exp \bigg(\sum_{\tau \in \trees_{(n+1)}} \delta \xbar^{\tau}_{st} \tau \bigg)}_{G^{(N)}(\B_{n+1})} \leq C\sum_{\tau \in \trees_{(n+1)}} |\delta \xbar^\tau_{st}|\;. 
\end{equ}
And by definition, 
\begin{align*}
|\delta \xbar^\tau_{st}|=  |\innerprod{\X_{st},\tau} -\innerprod{\Xbar^{(n)}_{st},\psi_n(\tau)}| &\leq |\innerprod{\X_{st},\tau}| + |\innerprod{\Xbar^{(n)}_{st},\psi_n(\tau)}|\\ 
&\leq |\innerprod{\X_{st},\tau}| + C\norm{\Xbar^{(n)}_{st}}_{G^{(N)}(\B_{n})} \leq C |t-s|^\gamma \;.
\end{align*}
This completes the proof.
\end{proof}
\begin{rmk}
Throughout the construction, we have ignored the fact that the path elements $\innerprod{\Xbar,\tau}$ actually have $\gamma |\tau|$-H\"older regularity, rather than just $\gamma$. Hence, for each component $\innerprod{\Xbar,\tau_1\otimes\dots\otimes \tau_n}$ with $|\tau_1| + \dots + |\tau_n|> N$, there will be a \emph{canonical} choice, given by defining the component as a Young integral.  
\end{rmk}

If branched rough paths can be written as geometric rough paths, then we should be able to import some of the tools from geometric rough paths to the world of branched rough paths. The following result tells us that the extension theorem \ref{thm:extension} can also be used on branched rough paths, for a special but very useful class of extension. Namely, if we have a branched rough path $\X^1$ above a path $X = (X^1,\dots,X^d)$ and an \emph{extended} path $\xbar = (X^1,\dots,X^d,\xbar^{d+1},\dots,\xbar^e)$, then there exists a branched rough path $\X^2$ above $\xbar$ which agrees with $\X^1$ on the $X$ components.

\begin{corr}\label{lem:alphabet_extension}
Let $\hopf^{1}, \hopf^2$ be the Connes-Kreimer Hopf algebras generated by the alphabets $\alphabet_1$ and $\alphabet_2$ respectively, where $\alphabet_1 \subset \alphabet_2$, so that $\hopf^1$ is a sub Hopf algebra of $\hopf^2$. Let $X = (X^i)_{i\in\alphabet_1}$ and $\xbar = (\xbar^i)_{i\in\alphabet_2}$ be two $\gamma$-H\"older continuous paths with $\xbar^i = X^i$ when $i \in \alphabet_1$. Let $\X^1$ be a branched rough path on $\hopf^1$ with $\inner{\X^1_{st},\bullet_i} = \delta X^i_{st}$ for each $i \in \alphabet_1$. Then there exists a branched rough path $\X^2$ on $\hopf^2$ with $\inner{\X^2_{st},\bullet_i} = \delta \xbar^i_{st}$ for each $i\in\alphabet_2$ and $\X^2$ is an extension of $\X^1$ in the sense that
\begin{equ}\label{e:add_path1}
\inner{\X^2_{st}, h} = \inner{\X^1_{st},h}\;,
\end{equ}
for every $h \in \hopf^1$. 
\end{corr}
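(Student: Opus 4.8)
The strategy is to encode $\X^1$ as a geometric rough path via Theorem~\ref{thm:branchedgeometric}, to enlarge the state space by adding the new coordinates with the Lyons--Victoir extension theorem (exactly as in the construction preceding Lemma~\ref{lem:quotient_path}), and then to transport the result back to $\hopf^2$ using Proposition~\ref{prop:geobranched} together with the morphism $\psi$ of Lemma~\ref{lem:psiexist}. Throughout, $N$ is the largest integer with $N\gamma\le1$ (the same for $\hopf^1$ and $\hopf^2$), and $\B_N(\alphabet_j)$ denotes the span of the rooted trees with at most $N$ vertices labelled from $\alphabet_j$. First, apply Theorem~\ref{thm:branchedgeometric} to $\X^1$: there are a $\gamma$-H\"older geometric rough path $\Xbar$ in $T^{(N)}(\B_N(\alphabet_1))$ and a graded Hopf morphism $\psi:(\hopf^1,\cdot,\Delta)\to(T(\B(\alphabet_1)),\shuffle,\Deltabar)$ satisfying \eqref{e:psi_condition} with $\innerprod{\X^1_{st},h}=\innerprod{\Xbar_{st},\psi(h)}$ for every $h\in\hopf^1_N$. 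Since the recursion \eqref{e:psi_definition} defining $\psi$ only involves admissible cuts of trees together with the polynomial product and coproduct, all of which restrict compatibly to the sub-Hopf-algebra $\hopf^1\subset\hopf^2$, the same recursion run over $\alphabet_2$ produces a graded Hopf morphism $\psi:\hopf^2\to T(\B(\alphabet_2))$ whose restriction to $\hopf^1$ is the previous one composed with the inclusion $T(\B(\alphabet_1))\hookrightarrow T(\B(\alphabet_2))$; we keep the same letter $\psi$ for it.

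Next, define the intermediate extension $\Xhat:[0,T]\to G^{(N)}(\B_N(\alphabet_2))$ by
\begin{equ}
\Xhat_t=\exp\Big(\log\Xbar_t+\sum_{i\in\alphabet_2\setminus\alphabet_1}\xbar^i_t\,\bullet_i\Big)\;,
\end{equ}
and let $K=\exp L$, where $L$ is the Lie ideal of $\G^{(N)}(\B_N(\alphabet_2))$ generated by the brackets $[\bullet_i,\B_N(\alphabet_2)]_{\otimes}$ with $i\in\alphabet_2\setminus\alphabet_1$. A Baker--Campbell--Hausdorff computation identical to the one in the proof of Lemma~\ref{lem:quotient_path} — using the sub-additivity of the homogeneous norm and the fact that the homogeneous norm of $G^{(N)}(\B_N(\alphabet_2))$ restricted to the subgroup $G^{(N)}(\B_N(\alphabet_1))$ is equivalent to the homogeneous norm of $G^{(N)}(\B_N(\alphabet_1))$ — shows that $\Xhat$ is $\gamma$-H\"older in the quotient $G^{(N)}(\B_N(\alphabet_2))/K$. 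By Theorem~\ref{thm:extension} there is a $\gamma$-H\"older geometric rough path $\Xbar'$ in $T^{(N)}(\B_N(\alphabet_2))$ with $\pi_{G^{(N)}(\B_N(\alphabet_2))/K}(\Xbar')=\Xhat$. Since every element of $K$ has trivial component along any tensor built from trees in $\trees_N(\alphabet_1)$ (the generators of $L$ all carry a factor $\bullet_i$ with $i$ new), one reads off that $\Xbar'$ agrees with $\Xbar$ on $T^{(N)}(\B_N(\alphabet_1))$ and that $\innerprod{\Xbar'_{st},\bullet_i}=\delta\xbar^i_{st}$ for $i\in\alphabet_2\setminus\alphabet_1$.

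Finally, set $\innerprod{\X^2_{st},h}\defin\innerprod{\Xbar'_{st},\psi(h)}$ for $h\in\hopf^2_N$ (components with $|h|>N$ being then determined canonically). Because $\psi$ is a graded morphism of $\cdot$ into $\shuffle$ and of $\Delta$ into $\Deltabar$ and $\Xbar'$ is geometric, the argument of Proposition~\ref{prop:geobranched} applies verbatim and shows that $\X^2$ is a $\gamma$-H\"older branched rough path on $\hopf^2$. We have $\psi(\bullet_i)=\bullet_i$ for all $i\in\alphabet_2$, so $\innerprod{\X^2_{st},\bullet_i}=\innerprod{\Xbar'_{st},\bullet_i}$, which equals $\delta\xbar^i_{st}$ when $i\in\alphabet_2\setminus\alphabet_1$ and equals $\innerprod{\Xbar_{st},\bullet_i}=\innerprod{\X^1_{st},\bullet_i}=\delta X^i_{st}$ when $i\in\alphabet_1$. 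Moreover, for $h\in\hopf^1_N$ we have $\psi(h)\in T^{(N)}(\B_N(\alphabet_1))$, on which $\Xbar'$ and $\Xbar$ coincide, whence $\innerprod{\X^2_{st},h}=\innerprod{\Xbar_{st},\psi(h)}=\innerprod{\X^1_{st},h}$, which is \eqref{e:add_path1}.

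The only genuinely new ingredient — and thus the main obstacle — is the $\gamma$-H\"older bound for $\Xhat$ in the quotient $G^{(N)}(\B_N(\alphabet_2))/K$; but this is a direct transcription of Lemma~\ref{lem:quotient_path} (with the space $\B_{(n+1)}$ there replaced by $\spn\{\bullet_i:i\in\alphabet_2\setminus\alphabet_1\}$ and $\Xbar^{(n)}$ by $\Xbar$). The remaining points are formal: that $L$ is a Lie ideal is the same elementary fact used for the ideals $L_n$ in the proof of Theorem~\ref{thm:branchedgeometric}; that $\psi$ restricts correctly is immediate from the alphabet-independence of its construction; and that $\X^2$ is a branched rough path is Proposition~\ref{prop:geobranched}. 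Note that, as observed in the remark following Theorem~\ref{thm:intro2}, one could equally start from \emph{any} geometric rough path above $X$ in place of the one furnished by Theorem~\ref{thm:branchedgeometric}.
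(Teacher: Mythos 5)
Your proof is correct and follows essentially the same route as the paper's: encode $\X^1$ geometrically via Theorem~\ref{thm:branchedgeometric}, form the intermediate extension $\Xhat$ on the enlarged tensor algebra, extend with Lyons--Victoir in the appropriate quotient (by the ideal generated by brackets involving the new letters), and pull back to $\hopf^2$ through $\psi$, using that $\psi^2$ restricts to $\psi^1$ on $\hopf^1$. The only cosmetic difference is that the paper reduces WLOG to adding a single letter while you treat all the new letters at once; both versions work, and your remark that the homogeneous norm on $G^{(N)}(\B_N(\alphabet_2))$ restricted to the subgroup is equivalent to that on $G^{(N)}(\B_N(\alphabet_1))$ just makes explicit a step the paper leaves implicit by citing Lemma~\ref{lem:quotient_path}.
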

\begin{proof}
Without loss of generality, assume $\alphabet_1 = \{1,\dots,d \}$ and $\alphabet_2 = \{1,\dots,d+1 \}$, so that $X = (X^1,\dots,X^d)$ and $\xbar = (X^1,\dots,X^d,X^{d+1})$. Let $\B^1_N$ and $\B_N^2$ be the vector spaces spanned by the trees $|\tau| \leq N$, with vertex decorations from $\alphabet_1$, $\alphabet_2$ respectively. Let $\psi^1 : \hopf^1 \to T(\B_N^1)$ be the map constructed in Lemma \ref{lem:psiexist} and similarly for $\psi^2 : \hopf^2 \to T(\B_N^2)$. Clearly, we have that $\psi^1(h) = \psi^2(h)$ for $h \in \hopf^1$. From Theorem \ref{thm:branchedgeometric}, we know that there exists a geometric rough path $\Xbar^{1}$ on $\Ntensor(\B^1_N)$ satisfying 
\begin{equ}
\inner{\X^1_{st},h} = \inner{\Xbar^{1}_{st},\psi^1(h)}\;.
\end{equ}
Now define $\Xhat^1 : [0,T] \to \Ntensor(\B_N^2)$ by
\begin{equ}
\Xhat^1_{t} = \exp \left( \log \Xbar^1_t + \xbar^{d+1}_t \bullet_{d+1} \right)\;.
\end{equ}
Using the same techniques employed in Lemma \ref{lem:quotient_path}, one can show that $\Xhat^1$ is a $\gamma$-H\"older continuous path in the quotient group $G^{(N)}(\B^2_N )/K$, where $K=\exp L$ and $L$ is the Lie ideal in $\Ntensor(\B_N^2)$ generated by $[\B_N^2, \bullet_{d+1}]$. From the Lyons-Victoir extension theorem \ref{thm:extension}, there exists a $\gamma$-H\"older path $\Xbar^2: [0,T] \to G^{(N)}(\B_N^2)$ satisfying 
\begin{equ}
\inner{\Xbar^2_{st}, u} = \inner{\Xhat^1_{st},u}\;,
\end{equ}
for all $u \in \Ntensor(\B^1_N)$ and $\inner{\Xbar^2_{st},\bullet_{d+1}} = \delta X^{d+1}_{st}$. We then define $\X^2$ in $\hopf^2$ by 
\begin{equ}
\inner{\X_{st}^2,h} = \inner{\Xbar^2_{st},\psi^2(h)}\;,
\end{equ} 
It follows from the properties and $\psi^2$ that $\X^2$ is indeed a branched rough path. Now, let $h \in \hopf^1$ then we have that 
\begin{equ}
\inner{\X^2_{st},h} = \inner{\Xbar^2_{st},\psi^2(h)} = \inner{\Xbar^2_{st},\psi^1(h)} = \inner{\Xhat^1_{st},\psi^1(h)} = \inner{\Xbar^1_{st},\psi^1(h)} = \inner{\X_{st},h}\;,
\end{equ}
which proves \eqref{e:add_path1}. Moreover, because $\psi^2(\bullet_{d+1}) = \bullet_{d+1}$, we have that
\begin{equ}
\inner{\X^2_{st}, \bullet_{d+1}} = \inner{\Xbar^2_{st},\bullet_{d+1}} = \delta \xbar^{d+1}\;,
\end{equ}
which shows that $\X^2$ is a branched rough path above $\xbar$ and hence completes the proof. 
\end{proof}

\section{Conversion formula}\label{sec:RDEnewbasis}
If $\bigy$ is the solution to the controlled rough path equation \eqref{e:control_fixpoint} with $\inner{1,\bigy}=Y$, then from Proposition \ref{prop:euler} we have that
\begin{equ}\label{e:con_intro}
\delta Y_{st} = \sum_{\tau \in \trees_N}f_\tau(Y_s)\inner{\X_{st},\tau} + r_{st}\;,
\end{equ}
where the coefficients $f_\tau(Y_s) = \inner{\tau,\bigy_s}$ are determined by \eqref{e:recurrence} with $f_{\bullet_i}=f_i$. In Section \ref{sec:geometric}, we saw that for every branched rough $\X$, there exists a geometric rough path $\Xbar$ taking values in $T^{(N)}(\B_N)$ and satisfying
\begin{equ}
\innerprod{\X_{st},\tau} = \innerprod{\Xbar_{st},\psi(\tau)} \;,
\end{equ}
where $\psi$ is the map derived in Lemma \ref{lem:psiexist}. If we apply this transformation to \eqref{e:con_intro}, we see that
\begin{equ}\label{e:ito_strato1}
\sum_{\tau \in \trees_N} f_\tau(Y_s)\innerprod{\X_{st},\tau} = \sum_{\sigma \in U_{N,N}} f_{\psi^*(\sigma)}(Y_s) \innerprod{\Xbar_{st},\sigma} 
\end{equ}  
where $\psi^*: T((\B_N)) \to \hopf^*$ is the adjoint of $\psi$, where $f_{\psi^*(\sigma)} = \sum_\tau \inner{\psi^*(\sigma),\tau}f_\tau$ and where 
\begin{equ}\label{e:UNN}
U_{N,n} = \{\tau_1 \otimes \dots \otimes \tau_k : \text{$\tau_i \in \trees_n$ with $k\leq N$} \}
\end{equ}
is the set of basis tensors for $T^{(N)}(\B_n)$. Since $Y$ appears to be controlled by $\Xbar$, it is natural to ask whether $Y$ solves an RDE driven by the geometric rough path $\Xbar$, providing a generalised It\^o-Stratonovich conversion formula. In Subsection \ref{subsec:geometric} we provide a criterion to determined when expressions controlled by a geometric rough path are solutions to RDEs driven by that geometric rough path. In Subsection \ref{subsec:strato}, namely in Theorem \ref{thm:strato} we derive the It\^o-Stratonovich conversion formula.

\subsection{Geometric RDEs}\label{subsec:geometric}
Let $\Xbar$ be a branched rough path above $\xbar \in \reals^d$ satisfying
\begin{equ}\label{e:geo}
\innerprod{\Xbar_{st},h} = \innerprod{\Xbar_{st},\iota\phi_g(h)}\;,
\end{equ}
for each $h\in\hopf_N$, where $\iota: T(\reals^d) \to \hopf$ is the inclusion map. Hence, $\Xbar$ is a geometric (branched) rough path. Let $Y$ be a controlled rough path solution to the RDE
\begin{equ}\label{e:geometricRDE}
dY_t = f(Y_t)\cdot d\xbar_t \;,
\end{equ}
driven by a geometric rough path $\Xbar$, where $f(Y)\cdot d\xbar = \sum_{i=1}^d f_i(Y)d\xbar^i$ and the vector fields $f_i : \reals^e \to \reals^e$ are smooth. From Proposition \ref{prop:euler}, we have that
\begin{equ}\label{e:geo_intro}
\delta Y_{st} = \sum_{\tau \in \trees_N} f_{\tau}(Y_s) \innerprod{\X_{st},\tau} + r_{st}\;,
\end{equ}
where $|r_{st}| = \liloh(|t-s|)$ and the coefficients $f_\tau$ satisfy the recurrence relation \eqref{e:recurrence} with $f_{\bullet_i} = f_i$. The geometric constraint \eqref{e:geo} allows us to rewrite $\delta Y_{st}$ as an expression controlled by only the linear trees in $\hopf_N$, which we identify with the basis elements of $T^{(N)}(\reals^d)$. To be precise,    
\begin{equ}
\sum_{\tau \in \trees_{N}} f_\tau(Y_s)\innerprod{\Xbar_{st},\tau} = \sum_{\sigma \in U_{N,1} } f_{\phi_g^*(\sigma)}(Y_s) \innerprod{\Xbar_{st},\iota\sigma}\;,
\end{equ}
where $\phi_g^*: T((\reals^d)) \to \hopf^*$ is the adjoint of $\phi_g$, where $f_{\phi_g^*(\sigma)} = \sum_\tau \inner{\phi_g^*(\sigma),\tau}f_\tau$ and 
\begin{equ}\label{e:UN1}
U_{N,1} = \{e_{v_1}\otimes \dots \otimes e_{v_k} : \text{$v_i =1 \dots d$ and $k\leq N$} \}
\end{equ}
denotes the basis tensors of $T^{(N)}(\reals^d)$. Note that only those terms in the subspace $T^{(N)}(\reals^d)$ appear, since all branched trees are in the kernel of $\phi_g^*$. 

\begin{rmk}
From \eqref{e:UNN}, we have
\begin{align*}
U_{N,1} &= \{\bullet_{v_1}\otimes \dots \otimes \bullet_{v_k} : \text{$v_i = 1\dots d$ and $k\leq N$} \} \\&\cong \{e_{v_1}\otimes \dots \otimes e_{v_k} : \text{$v_i =1 \dots d$ and $k\leq N$} \}\;,
\end{align*}
so that \eqref{e:UN1} is not an abuse of notation.
\end{rmk}

We can use this representation to develop another recurrence formula, to characterise those expressions controlled by geometric rough paths that are solutions to a given RDE.
\begin{prop}\label{prop:georecurrence}
Let $\Xbar$ be a geometric (branched) rough path above $\xbar$. Then $\bigybar$ with $\inner{1,\bigybar}=Y$ is the controlled rough path solution to 
\begin{equ}\label{e:LRDE}
dY_t = f(Y_t)\cdot d\xbar_t\;,
\end{equ}
driven by $\Xbar$ if and only if $\inner{\tau,\bigybar_t} = f_\tau(Y_t)$ as defined above and
\begin{equ}\label{e:geo_controlled}
\delta Y_{st} = \sum_{\sigma \in U_{N,1}} F_\sigma(Y_s) \innerprod{\Xbar_{st},\iota\sigma} + r_{st}\;,
\end{equ}
where $|r_{st}| = \liloh(|t-s|)$ and where the coefficients $F_\sigma$ are defined by the recurrence $F_{e_i}= f_i$ and
\begin{equ}\label{e:geometricrecurrence}
F_{e_{v_1}\otimes \dots \otimes e_{v_n}} = F_{e_{v_1}}\cdot DF_{e_{v_2}\otimes \dots \otimes e_{v_n}}\;,
\end{equ}
for any $v_i =1,\dots,d$ and any $n\leq N$. 
\end{prop}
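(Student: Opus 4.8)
The plan is to reduce the statement to Proposition~\ref{prop:euler} together with one purely algebraic identity between the Butcher coefficients $f_\tau$ and the coefficients $F_\sigma$ defined by the recurrence \eqref{e:geometricrecurrence}. By Proposition~\ref{prop:euler}, a path $\bigybar$ with $\inner{1,\bigybar}=Y$ is the controlled rough path solution of \eqref{e:LRDE} driven by $\Xbar$ if and only if $\inner{\tau,\bigybar_t}=f_\tau(Y_t)$ for all $\tau\in\forest_N^*$ (with $f_{\bullet_i}=f_i$) and $\delta Y_{st}=\sum_{\tau\in\trees_N}f_\tau(Y_s)\inner{\Xbar_{st},\tau}+r_{st}$ with $|r_{st}|=\liloh(|t-s|)$. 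As the condition on the coefficients already appears in the statement of the proposition, it suffices to check that, when $\Xbar$ satisfies the geometricity condition \eqref{e:geo}, this last expansion is \emph{term-by-term identical}, with the very same remainder $r_{st}$, to \eqref{e:geo_controlled}. Using \eqref{e:geo} we replace $\inner{\Xbar_{st},\tau}$ by $\inner{\Xbar_{st},\iota\phi_g(\tau)}$, expand $\phi_g(\tau)$ over linear trees, and regroup the (finite) sums; exactly as in the discussion preceding the proposition this yields $\sum_{\tau\in\trees_N}f_\tau(Y_s)\inner{\Xbar_{st},\tau}=\sum_{\sigma\in U_{N,1}}f_{\phi_g^*(\sigma)}(Y_s)\inner{\Xbar_{st},\iota\sigma}$ with $f_{\phi_g^*(\sigma)}=\sum_\tau\inner{\sigma,\phi_g(\tau)}f_\tau$. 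Hence the whole proposition reduces to the identity $F_\sigma=f_{\phi_g^*(\sigma)}$ for every basis tensor $\sigma\in U_{N,1}$.

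Second, I would identify $\phi_g^*$ on the concatenation side. By Lemma~\ref{lem:deltaphig}, $\phi_g$ is a morphism of Hopf algebras $(\hopf,\cdot,\Delta)\to(T(\reals^d),\shuffle,\Deltabar)$, so dualising the coproduct identity \eqref{e:deltaphig} shows that its adjoint $\phi_g^*$ intertwines the concatenation product on $T((\reals^d))$ with the convolution product $\star$ on $\hopf^*$, i.e.\ $\phi_g^*(\alpha\otimes\beta)=\phi_g^*(\alpha)\star\phi_g^*(\beta)$. Since $\phi_g(\bullet_j)=e_j$ and $\phi_g$ is graded, one reads off $\phi_g^*(e_i)=\bullet_i$, and therefore $\phi_g^*(e_{v_1}\otimes\cdots\otimes e_{v_n})=\bullet_{v_1}\star\cdots\star\bullet_{v_n}$. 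Thus the target identity becomes $F_{e_{v_1}\otimes\cdots\otimes e_{v_n}}=f_{\bullet_{v_1}\star\cdots\star\bullet_{v_n}}$, where $f_{(\cdot)}$ denotes the linear extension \eqref{e:fh_def}, which annihilates all non-trivial forests.

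The crux, and the step I expect to be the main obstacle, is the following ``derivation'' property of $\tau\mapsto f_\tau$: for every $g\in\hopf^*$ without unit component,
\begin{equ}
f_{\bullet_i\star g}=(Df_g)\,f_i\;,
\end{equ}
i.e.\ $f_{\bullet_i\star g}$ is the derivative of $f_g$ in the direction $f_i$. I would prove this in two stages. First, writing $f_{\bullet_i\star g}=\sum_\tau\inner{\bullet_i\otimes g,\Delta\tau}\,f_\tau$ and invoking the cut description of $\Delta$ from Remark~\ref{rmk:cuts}: the factor $\bullet_i$ can only pair with a pruned single leaf labelled $i$, whose complement is again a tree, so only the tree components of $g$ contribute and $f_{\bullet_i\star g}=\sum_{\tau'\in\trees^*}\inner{g,\tau'}\,f_{\bullet_i\star_t\tau'}$, where $\star_t$ is the Grossman--Larson grafting of \eqref{e:GL}. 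Second, the elementary-differential identity $f_{\bullet_i\star_t\tau}=(Df_\tau)\,f_i$ for a single tree $\tau$, by induction on $|\tau|$: writing $\tau=[\tau_1\cdots\tau_m]_j$, grafting $\bullet_i$ onto the root produces $[\tau_1\cdots\tau_m\bullet_i]_j$, while grafting it inside $\tau_k$ produces $[\tau_1\cdots(\bullet_i\star_t\tau_k)\cdots\tau_m]_j$; applying the recurrence \eqref{e:recurrence}, the induction hypothesis $f_{\bullet_i\star_t\tau_k}=(Df_{\tau_k})\,f_i$, and the symmetry of $D^{m+1}f_j$, the resulting sum is precisely the Leibniz expansion of $D(D^mf_j:(f_{\tau_1},\dots,f_{\tau_m}))\,f_i=D(f_\tau)\,f_i$; the base case $\tau=\bullet_j$ is immediate from $f_{[\bullet_i]_j}=Df_j\cdot f_i$.

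Finally, I would establish $F_\sigma=f_{\phi_g^*(\sigma)}$ by induction on the length $n$ of $\sigma$. For $n=1$, $f_{\phi_g^*(e_i)}=f_{\bullet_i}=f_i=F_{e_i}$. For $n>1$, write $\sigma=e_{v_1}\otimes\sigma'$; using that $\phi_g^*$ is an algebra morphism, the derivation property applied to $g=\phi_g^*(\sigma')$ (which has positive degree $n-1$), and the inductive hypothesis $f_{\phi_g^*(\sigma')}=F_{\sigma'}$,
\begin{equ}
f_{\phi_g^*(\sigma)}=f_{\bullet_{v_1}\star\phi_g^*(\sigma')}=(Df_{\phi_g^*(\sigma')})\,f_{v_1}=(DF_{\sigma'})\,f_{v_1}=F_{e_{v_1}}\cdot DF_{\sigma'}=F_\sigma\;,
\end{equ}
which is exactly the recurrence \eqref{e:geometricrecurrence}. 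Substituting $F_\sigma=f_{\phi_g^*(\sigma)}$ back into the rewritten expansion of $\delta Y_{st}$ from the first paragraph and appealing once more to Proposition~\ref{prop:euler} completes the proof. Apart from the elementary-differential identity $f_{\bullet_i\star_t\tau}=(Df_\tau)\,f_i$, every ingredient is a routine consequence of the Hopf-algebraic duality already in place.
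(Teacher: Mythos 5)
Your proposal is correct, and its skeleton coincides with the paper's: reduce via Proposition~\ref{prop:euler} to the claim that, when $\Xbar$ is geometric, the controlled expansion $\sum_{\tau\in\trees_N}f_\tau(Y_s)\inner{\Xbar_{st},\tau}$ may be rewritten as $\sum_{\sigma\in U_{N,1}}f_{\phi_g^*(\sigma)}(Y_s)\inner{\Xbar_{st},\iota\sigma}$, then show $f_{\phi_g^*(\sigma)}$ satisfies the tensor recurrence \eqref{e:geometricrecurrence}, and use uniqueness of the recurrence for the ``if'' direction. You also dualise Lemma~\ref{lem:deltaphig} to get $\phi_g^*(\alpha\otimes\beta)=\phi_g^*(\alpha)\star\phi_g^*(\beta)$, exactly as the paper does. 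Where you genuinely depart from the paper is in the proof of the key algebraic input: the paper invokes Lemma~\ref{lem:L_GL}, $D^q f_h:(f_{\lambda_1},\dots,f_{\lambda_q})=f_{(\lambda_1\cdots\lambda_q)\star h}$, whose proof is a fairly long multinomial/coproduct bookkeeping argument, and then specialises to $q=1$, $\lambda_1=\bullet_{v_1}$. You instead prove directly (and only) the needed single-graft derivation identity $f_{\bullet_i\star g}=(Df_g)f_i$, via the cut description of $\Delta$ to reduce to $f_{\bullet_i\star_t\tau}=(Df_\tau)f_i$ for single trees, and then an induction on $|\tau|$ using the Grossman--Larson recursion $\bullet_i\star_t[\tau_1\cdots\tau_m]_j=[\tau_1\cdots\tau_m\bullet_i]_j+\sum_k[\tau_1\cdots(\bullet_i\star_t\tau_k)\cdots\tau_m]_j$ together with the Leibniz rule applied to $D^m f_j:(f_{\tau_1},\dots,f_{\tau_m})$. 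This is cleaner and more self-contained than the paper's route for this particular proposition (the paper itself remarks that only the $q=1$ case of Lemma~\ref{lem:L_GL} is ever used). The only thing your argument does not supply for free is the $q=1$ case for a \emph{general} tree $\lambda_1$ rather than a single vertex $\bullet_i$, which is what Theorem~\ref{thm:strato} needs; but the same grafting/Leibniz induction goes through verbatim with $\bullet_i$ replaced by an arbitrary $\sigma\in\trees^*$, so your approach in fact extends to cover that use as well.
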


\begin{rmk}
Since each $F_{e_{v_1}\otimes\dots\otimes e_{v_k}} : \reals^e \to \reals^e$, the identity \eqref{e:geometricrecurrence} should be interpreted as
\begin{equ}
F_{e_{v_1}\otimes \dots \otimes e_{v_n}}(Y)_i = F_{e_{v_1}}(Y)_j \del^{j}F_{e_{v_2}\otimes \dots \otimes e_{v_n}}(Y)_i\;,  
\end{equ} 
for each $i=1\dots e$, where $F_{\sigma}(Y)_i$ denotes the $i$-th component.
\end{rmk}

\begin{rmk}
One can also define $\Xbar$-controlled rough paths for a geometric $\Xbar$ on $T^{(N)}(\reals^d)$. These are similarly defined as paths $\bigybar : [0,T]\to T^{(N-1)}(\reals^d)$ satisfying the consistency condition
\begin{equ}
\inner{v,\bigybar_t} = \inner{\Xbar_{st}\otimes v,\bigybar_s} + R_{st}^{v}\;,
\end{equ}
for every tensor $v$ and where $|R_{st}^v| \leq C |t-s|^{(N-|v|)\gamma}$. The new recurrence condition  \eqref{e:geometricrecurrence} is then simply the analogue of the branched rough path recurrence \eqref{e:recurrence} in a geometric controlled rough path setting. In particular, one could also read Proposition \ref{prop:georecurrence} as: The geometric controlled rough path $\bigybar : [0,T]\to T^N(\reals^d)$ is a controlled rough path solution to \eqref{e:LRDE} with $\inner{1,\bigybar}=Y$ if and only if $Y$ satisfies \eqref{e:geo_controlled} where the coefficients $\inner{\sigma,\bigybar_t} = F_\sigma(Y_t)$ are determined by the recurrence \eqref{e:geometricrecurrence} with $F_{e_i} = f_i$. However, since we can already define geometric rough paths as a special class of branched rough paths, we see no need for this extra definition. 
\end{rmk}

\begin{rmk}\label{rmk:different_VS}
Naturally, we can apply Proposition \ref{prop:georecurrence} to any geometric (branched) rough path $\Xbar$ above a path $\xbar$, where $\xbar$ takes values in an arbitrary vector space $V$. For instance, in the next subsection we will have $\xbar$ taking values in $\B_N$, as constructed in Theorem \ref{thm:branchedgeometric}. In this case, the condition \eqref{e:geo_controlled} looks like 
\begin{equ}
\delta Y_{st} = \sum_{\sigma \in U_{N,N}} F_\sigma(Y_s) \inner{\Xbar_{st},\sigma} + r_{st}\;,
\end{equ}
where $U_{N,N}$ is defined by \eqref{e:UNN} and where $F_\sigma$ satisfy
\begin{equ}
F_{\tau_1 \otimes \dots \otimes \tau_n} = F_{\tau_1} \otimes DF_{\tau_2 \otimes \dots \otimes \tau_n}\;,
\end{equ}
for all $\tau_1 \otimes \dots \otimes \tau_n \in U_{N,N}$. 
\end{rmk}

Before proving the proposition, we need the following lemma, which highlights a useful property of the functions $f_\tau$. This lemma will be used in both this subsection and the next. As usual, we will use the notation $f_h = \inner{h,1}\Id +  \sum_{\tau}  \inner{h,\tau}f_\tau$ for any $h\in\hopf^*$. 
\begin{lemma}\label{lem:L_GL}
We have that
\begin{equ}
D^q f_{h}: (f_{\lambda_1},\dots,f_{\lambda_q}) = f_{(\lambda_1 \dots \lambda_q)\star h}\;,
\end{equ}
for any $\lambda_1,\dots,\lambda_q \in \trees^*$ and any $h\in\hopf^*$. 
\end{lemma}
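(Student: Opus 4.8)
The plan is to prove the identity by induction on $q$, the number of trees $\lambda_1,\dots,\lambda_q$, with the base cases $q=0$ (trivial, since $D^0 f_h = f_h$ and the empty product is the unit $1$, so $1 \star h = h$) and $q=1$ being the crucial one. For $q=1$, I would first establish that $D f_h : f_\lambda = f_{\lambda \star h}$ for any single tree $\lambda$. By linearity in $h$ it suffices to check this when $h$ is a basis element of $\hopf^*$, i.e. $h = 1$ or $h = [\sigma_1 \dots \sigma_m]_i$ is a tree (since $f_{\tau_1 \dots \tau_n}=0$ on non-trivial products, those cases contribute nothing on the left, and I would need to check $\lambda \star (\tau_1 \dots \tau_n)$ also lands in the kernel of $f_\bullet$, which follows because growing a tree onto a forest of $\geq 2$ trees still yields a forest of $\geq 2$ trees). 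The case $h = [\sigma_1 \dots \sigma_m]_i$ is where the Grossman--Larson product \eqref{e:GL} enters: $\lambda \star [\sigma_1 \dots \sigma_m]_i = \lambda [\sigma_1 \dots \sigma_m]_i + \lambda \star_t [\sigma_1 \dots \sigma_m]_i$, and the second term is the sum of all trees obtained by grafting $\lambda$ onto a vertex of $[\sigma_1\dots\sigma_m]_i$. Since $f$ on a product vanishes, $f_{\lambda \star h} = f_{\lambda \star_t h}$, and grafting $\lambda$ onto the root gives $[\sigma_1 \dots \sigma_m \lambda]_i$ with $f_{[\sigma_1\dots\sigma_m\lambda]_i} = D^{m+1}f_i : (f_{\sigma_1},\dots,f_{\sigma_m},f_\lambda)$, while grafting onto a vertex inside some $\sigma_j$ replaces $f_{\sigma_j}$ by $D f_{\sigma_j} : f_\lambda$; summing these reproduces exactly $D(D^m f_i : (f_{\sigma_1},\dots,f_{\sigma_m})) : f_\lambda = D f_{[\sigma_1\dots\sigma_m]_i} : f_\lambda$ by the Leibniz/product rule applied to the recurrence \eqref{e:recurrence}.

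With the $q=1$ case in hand, the inductive step is straightforward: assuming the identity for $q-1$, write
\begin{equ}
D^q f_h : (f_{\lambda_1},\dots,f_{\lambda_q}) = D\big(D^{q-1} f_h : (f_{\lambda_2},\dots,f_{\lambda_q})\big) : f_{\lambda_1} = D f_{(\lambda_2 \dots \lambda_q)\star h} : f_{\lambda_1}\;,
\end{equ}
using that differentiation commutes with the contraction against the fixed vector fields $f_{\lambda_2},\dots,f_{\lambda_q}$ (the $f_{\lambda_j}$ do not depend on the point only insofar as we differentiate the whole composite — more carefully, $D^q f_h$ contracted against $(f_{\lambda_1},\dots,f_{\lambda_q})$ should be read as applying $D$ to the scalar-argument-free tensor $D^{q-1}f_h$ evaluated on the constant slots, which is the standard convention used throughout Section~\ref{sec:controlled}, cf. \eqref{e:fix_condition}). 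Then by the $q=1$ case applied with $h$ replaced by $(\lambda_2 \dots \lambda_q)\star h$ (noting $f_{(\lambda_2\dots\lambda_q)\star h}$ is of the form $f_{h'}$ for $h' = (\lambda_2\dots\lambda_q)\star h \in \hopf^*$) we get $D f_{(\lambda_2\dots\lambda_q)\star h} : f_{\lambda_1} = f_{\lambda_1 \star ((\lambda_2\dots\lambda_q)\star h)}$, and associativity of $\star$ gives $\lambda_1 \star (\lambda_2 \dots \lambda_q) \star h = (\lambda_1 \dots \lambda_q)\star h$ once we observe $\lambda_1 \star (\lambda_2 \dots \lambda_q) = \lambda_1 (\lambda_2 \dots \lambda_q) + (\text{grafted terms})$ and the grafted terms are again killed by $f$ after a final $\star h$ and contraction — actually cleaner is to note $f_{\lambda_1 \star (\lambda_2 \dots \lambda_q) \star h} = f_{(\lambda_1 \lambda_2 \dots \lambda_q)\star h}$ because $\star$-multiplying a forest difference by anything and applying $f$ preserves the vanishing.

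The main obstacle I anticipate is bookkeeping the combinatorics of the Grossman--Larson grafting in the $q=1$ step and matching it precisely against the Leibniz expansion of $D$ applied to the recurrence \eqref{e:recurrence} — in particular being careful that grafting $\lambda$ onto the root versus onto an internal vertex accounts for \emph{all} terms in $D(D^m f_i : (f_{\sigma_1},\dots,f_{\sigma_m}))$, with the symmetry factors ($1/m!$ conventions from \eqref{e:recurrence}) handled consistently. A secondary subtlety is verifying that all the ``product'' terms ($\lambda \cdot h$, and products arising from $\star$) are annihilated by the family $f_\bullet$, so that only the connected grafting terms survive; this is exactly the property $f_{\tau_1 \dots \tau_n}=0$ for non-trivial products, extended through the $\star$ operation. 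Both of these are essentially the same computations that appear in the proof of Lemma \ref{lem:Y_control}, so I would expect to reuse that machinery.
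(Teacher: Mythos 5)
Your $q=1$ argument is essentially sound (it is in effect an induction on $|\tau|$, since to handle grafting of $\lambda$ onto an internal vertex of $\sigma_j$ you silently invoke $Df_{\sigma_j}:f_\lambda = f_{\lambda\star_t\sigma_j}$, which is the $q=1$ claim for the smaller tree $\sigma_j$; you should say so explicitly), and it is in the spirit of the paper's own proof. The trouble is the inductive step from $q-1$ to $q$, which rests on two assertions that are both false, even though their errors happen to cancel.

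First, the identity
\begin{equ}
D^q f_h : (f_{\lambda_1},\dots,f_{\lambda_q}) \;=\; D\big(D^{q-1} f_h : (f_{\lambda_2},\dots,f_{\lambda_q})\big) : f_{\lambda_1}
\end{equ}
is not correct, and it is not resolved by ``the standard convention.'' The notation $D^n\phi : (v_1,\dots,v_n)$ introduced just before \eqref{e:control_taylor}, and used in \eqref{e:fix_condition} and \eqref{e:recurrence}, places the derivatives on $\phi$ only; when the slots are themselves vector fields $f_{\lambda_j}$, the map $Y\mapsto D^{q-1}f_h(Y):(f_{\lambda_2}(Y),\dots,f_{\lambda_q}(Y))$ is a genuine composition and differentiating it produces, by the Leibniz rule, the extra terms $\sum_{j\ge 2} D^{q-1}f_h:(f_{\lambda_2},\dots,Df_{\lambda_j}:f_{\lambda_1},\dots,f_{\lambda_q})$. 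This is precisely the Leibniz phenomenon that the paper treats explicitly (cf.\ \eqref{e:prove_control1} and \eqref{e:leibniz2}). Second, your claim that $f_{\lambda_1\star(\lambda_2\dots\lambda_q)\star h} = f_{(\lambda_1\dots\lambda_q)\star h}$ because ``$\star$-multiplying a forest difference by anything and applying $f$ preserves the vanishing'' is also false: $f_F = 0$ for a non-trivial forest $F$ does \emph{not} imply $f_{F\star h}=0$, because $F\star h$ contains trees obtained by grafting all components of $F$ onto $h$. Concretely for $q=2$, $\lambda_1=\bullet_a$, $\lambda_2=\bullet_b$, $h=\bullet_c$, the difference $\lambda_1\star\lambda_2 - \lambda_1\lambda_2 = \lambda_1\star_t\lambda_2$ is a sum of trees, and $(\lambda_1\lambda_2)\star\bullet_c$ itself already contains $\mytreetwoone{a}{b}{c}$, so $f_{(\bullet_a\bullet_b)\star\bullet_c} = D^2 f_c:(f_a,f_b) \neq 0$ in general. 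The two errors do cancel --- the Leibniz cross-terms $Df_{\lambda_j}:f_{\lambda_1}=f_{\lambda_1\star_t\lambda_j}$ match exactly the grafted-term contributions in $\lambda_1\star(\lambda_2\dots\lambda_q) = \lambda_1\lambda_2\dots\lambda_q + \sum_j \lambda_2\dots(\lambda_1\star_t\lambda_j)\dots\lambda_q$ after another application of the induction hypothesis --- but as written the step does not constitute a proof, since both intermediate identities it invokes are wrong. If you want the $q$-induction to work you must carry the Leibniz corrections and the $\star_t$ terms through explicitly and verify their cancellation. By contrast the paper avoids this bookkeeping by inducting on the \emph{tree size} $|\tau|$ with $q$ held general, applying the full multi-index Leibniz formula once in \eqref{e:leibniz2} and matching it against the combinatorial expansion of $f_{(\lambda_1\dots\lambda_q)\star[\tau_1\dots\tau_n]_i}$ in a single pass.
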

\begin{remark}
In this article we only ever require Lemma \ref{lem:L_GL} in the case $q=1$. However, we include the general statement as it highlights a striking algebraic feature of the coefficients $f_h$. In particular, the algebraic structure of the rough path $\X$ is \emph{twinned} with another algebraic structure on the coefficients of the solution. 
\end{remark}

\begin{proof}[Proof of Proposition \ref{prop:georecurrence}] We will first prove the `only if' statement. From Proposition \eqref{prop:euler}, we know that the controlled rough path solution $\bigybar$ to \eqref{e:LRDE} with $\inner{1,\bigybar}=Y$ satisfies
\begin{equ}
\delta Y_{st} = \sum_{\tau \in \trees_N} f_\tau(Y_s)\innerprod{\Xbar_{st},\tau}+ r_{st} \;,
\end{equ}
where $|r_{st}| = \liloh(|t-s|)$ and has coefficients $\inner{\tau,\bigybar_t} = f_\tau(Y_t)$. Since $\Xbar$ is geometric, we also know that 
\begin{equ}
\delta Y_{st} = \sum_{\sigma \in U_{N,1}} f_{\phi_g^*(\sigma)}(Y_s) \innerprod{\Xbar_{st},\iota\sigma} + r_{st}\;.
\end{equ}
Therefore, since $f_{\phi_g^*(e_i)} = f_{\bullet_i} = f_i$, it suffices to check that $f_{\phi_g^*(\sigma)}(Y_s)$ satisfies \eqref{e:geometricrecurrence} for each tensor $\sigma \in U_{N,1}$. Firstly, from Lemma \ref{lem:deltaphig}, we know that $(\phi_g \otimestilde \phi_g) \Delta = \Deltabar \phi_g$. Using the dual of this expression, we obtain
\begin{equ}
\phi_g^*(\sigma_1 \otimes \sigma_2) = \phi_g^*(\sigma_1)\star \phi_g^*(\sigma_2)\;,
\end{equ}
for any $\sigma_1, \sigma_2 \in T^{(N)}(\reals^d)$. In particular,
\begin{equ}
\phi_g^*(e_{v_1}\otimes \dots \otimes e_{v_n}) =  \phi_g^* (e_{v_1}) \star \phi_g^* (e_{v_2}\otimes \dots \otimes e_{v_n}) = e_{v_1} \star \phi_g^* (e_{v_2}\otimes \dots \otimes e_{v_n})\;.
\end{equ}
Combining this with Lemma \ref{lem:L_GL}, we obtain
\begin{align*}
f_{\phi_g^* ( e_{v_1}\otimes \dots \otimes e_{v_n})} &= f_{e_{v_1} \star \phi_g^* (e_{v_2}\otimes \dots \otimes e_{v_n})}\\ 
&= f_{e_{v_1}} \cdot Df_{\phi_g^*(e_{v_2}\otimes \dots \otimes e_{v_n})}\\
&= f_{\phi_g^*(e_{v_1})}\cdot Df_{\phi_g^*(e_{v_2}\otimes \dots \otimes e_{v_n})}\;. 
\end{align*}
This proves the claimed recurrence. For the `if' statement, suppose $Y$ satisfies \eqref{e:geo_controlled}, with coefficients $F_{\sigma}$ satisfying the recurrence \eqref{e:geometricrecurrence}. Let $f_\tau$ be the coefficients defined by \eqref{e:recurrence} with $f_{\bullet_i} =f_i$. Since both $F_{\sigma}$ and $f_{\phi_g^*(\sigma)}$ satisfy \eqref{e:geometricrecurrence}, with $F_{e_{i}}=f_{\phi_g^*(e_{i})}$ we must have $F_{\sigma}=f_{\phi_g^*(\sigma)}$ for all $\sigma \in U_{N,1}$. Then, using the same calculation as above, we have that
\begin{align*}
\delta Y_{st} - \sum_{\tau \in \trees_N} f_{\tau}(Y_s) \innerprod{\Xbar_{st},\tau} &= \delta Y_{st} - \sum_{\sigma \in U_{N,1}} f_{\phi_g^*(\sigma)}(Y_s) \innerprod{\Xbar_{st},\iota\sigma}\\
 &= \delta Y_{st} - \sum_{\sigma \in U_{N,1}} F_{\sigma}(Y_s) \innerprod{\Xbar_{st},\iota\sigma} = r_{st}\;.
\end{align*}
It follows from Proposition \ref{prop:euler} that $\bigybar$ is the controlled rough path solution to \eqref{e:LRDE}. Hence, upon proving Lemma \ref{lem:L_GL}, this completes the proof.
\end{proof}

\begin{proof}[Proof of Lemma \ref{lem:L_GL}]
First note that if $h$ is a non-trivial product then certainly $(\lambda_1 \dots \lambda_q) \star h$ is a linear combination of non-trivial products. Hence, we can restrict our attention to those $h \in \hopf^*$ that are linear combinations of elements in $\trees^*$, since $f_{\tau_1 \dots \tau_n} = 0$ all non-trivial products $\tau_1 \dots \tau_n \in \forest^* \setminus \trees^*$. Moreover, since $f_{\tau_1 + \dots + \tau_n} = f_{\tau_1} + \dots + f_{\tau_n}$, the claim will follow from 
\begin{equ}
D^q f_{\tau}: (f_{\lambda_1},\dots,f_{\lambda_q}) = f_{(\lambda_1 \dots \lambda_q)\star \tau}\;,
\end{equ}
for all $\lambda_1 , \dots , \lambda_q, \tau \in \trees^*$. 
\par
We will prove the claim by induction. The claim clearly holds for $\tau = \bullet_i$ and any $\lambda_1, \dots,\lambda_q \in \trees^*$, since this is simply the recurrence \eqref{e:recurrence}. Suppose the claim holds for $\tau \in \trees_m^*$ and all $\lambda_1,\dots,\lambda_q \in \trees^*$ we will prove the claim for $\tau \in \trees_{m+1}^*$ and all $\lambda_1,\dots,\lambda_q \in \trees^*$. Without loss of generality, let $ \tau = [\tau_1 \dots \tau_n]_i $ for $\tau_j \in \trees_m$. Firstly, by the recurrence \eqref{e:recurrence}, we have that 
\begin{equ}
D^q f_{[\tau_1 \dots \tau_n]_i} : (f_{\lambda_1},\dots,f_{\lambda_q}) =  D^q \bigg(D^n f_{i} : (f_{\tau_1} \dots f_{\tau_n})\bigg) : (f_{\lambda_1},\dots,f_{\lambda_q})\;.
\end{equ}
If we apply the Leibniz formula then the above equals
\begin{equ}\label{e:leibniz2}
\sum_{p,p_1,\dots,p_n}\binom{q}{p,p_1,\dots,p_n} D^{p+n} f_i : \bigg( f_{\lambda_1},\dots,f_{\lambda_p},u_1^{p_1},\dots,u_{n}^{p_n} \bigg)\;,
\end{equ} 
where $\binom{q}{p,p_1,\dots,p_n} = \frac{q!}{p! p_1 ! \dots p_n !}$ and where we sum over all partitions $p + p_1 + \dots + p_n = q$ and where 
\begin{equ}
u_i^{p_i} = D^{p_i} f_{\tau_i} : (f_{\lambda_1^{i}}, \dots , f_{\lambda_{p_i}^i})\;,
\end{equ}
using the notation 
\begin{equ}
(\lambda_1,\dots,\lambda_q) = (\lambda_1,\dots,\lambda_p,\lambda_1^1,\dots,\lambda_{p_1}^1,\dots,\lambda_1^{n},\dots,\lambda_{p_n}^n)\;.
\end{equ}
Since $\tau_i \in \trees_m$, it follows by the induction hypothesis that $u_i^{p_i} = f_{(\lambda_1^{i} \dots \lambda_{p_i}^i)\star \tau_i}$. Hence, \eqref{e:leibniz2} equals
\begin{align}\notag
&\sum_{\lambdatilde_1,\dots,\lambdatilde_n} \bigg(\inner{(\lambda_1^1 \dots \lambda_{p_1}^1)\star \tau_1,\lambdatilde_1} \dots \inner{(\lambda_1^n \dots \lambda_{p_1}^n)\star \tau_n, \lambdatilde_n}\bigg)\\ &\qquad\qquad\qquad\qquad \binom{q}{p,p_1,\dots,p_n}  \times  D^{p+n} f_i : \bigg( f_{\lambda_1},\dots,f_{\lambda_p},f_{\lambdatilde_1},\dots,f_{\lambdatilde_n}\bigg)\notag\\
&=\sum_{\lambdatilde_1,\dots,\lambdatilde_n} \bigg(\inner{(\lambda_1^1 \dots \lambda_{p_1}^1)\star \tau_1,\lambdatilde_1} \dots \inner{(\lambda_1^n \dots \lambda_{p_1}^n)\star \tau_n, \lambdatilde_n}\bigg)\notag \\ &\qquad\qquad\qquad\qquad\qquad\qquad\times\binom{q}{p,p_1,\dots,p_n} f_{[\lambda_1 \dots \lambda_p \lambdatilde_1 \dots \lambdatilde_n]_i} \label{e:leibniz_left}\;,
\end{align}
where we sum over all partitions $p+p_1+ \dots + p_n = q$ and all $\lambdatilde_i \in \trees$. On the other hand, we have that 
\begin{equ}\label{e:leibniz_right}
f_{(\lambda_1 \dots \lambda_q)\star [\tau_1 \dots \tau_n]_i} = \sum_{\sigma \in \trees} \inner{(\lambda_1 \dots \lambda_q) \star [\tau_1 \dots \tau_n]_i , \sigma} f_{\sigma}\;.
\end{equ}
By eliminating those terms that will vanish, this equals
\begin{equ}
\sum_{\rho_1,\dots,\rho_{n+p}} \frac{1}{(n+p)!}\inner{(\lambda_1 \dots \lambda_q) \otimest [\tau_1 \dots \tau_n]_i, \Delta [\rho_1 \dots \rho_{n+p}]_i} f_{[\rho_1 \dots \rho_{n+p}]_i}\;,
\end{equ}
where we sum over all $\rho_i \in \trees_N$ and the factor $\frac{1}{(n+p)!}$ appears due to the fact that $[\rho_1 \dots \rho_{n+p} ]_i$ is identical for different arrangements of the $\rho_i$. By definition, we have that
\begin{align*}
\sum_{p=0}^N\inner{(\lambda_1 \dots \lambda_q) &\otimest [\tau_1 \dots \tau_n]_i, \Delta [\rho_1 \dots \rho_{n+p}]_i}\\  &= \inner{\lambda_1 \dots \lambda_q,  \rho_1^{(1)} \dots \rho_{n+p}^{(1)}} \inner{\tau_1\dots \tau_n,\rho_1^{(2)}\dots \rho_{n+p}^{(2)}}\;.
\end{align*}
Now, each of these terms will vanish unless $\rho_i^{(2)} = 1$ for exactly $p$ factors in $\rho_1^{(2)}\dots \rho_{n+p}^{(2)}$. Moreover, since we are summing over all $\rho_i$, the expression must be symmetric in the $\rho_i$. In particular, we can assume that $\rho_i^{(2)}=1$ for $1 \leq i \leq p$, provided we include the combinatorial factor $\binom{n+p}{p}$. Of course, this implies $\rho_i^{(1)}\otimest \rho_i^{(2)} = \rho_i\otimest 1$ for $1\leq i \leq p$. Hence, \eqref{e:leibniz_right} equals
\begin{align*}
\sum_{\rho_1,\dots,\rho_{n+p}} \frac{1}{(n+p)!}\binom{n+p}{p} \inner{\lambda_1 \dots \lambda_q,  \rho_1 \dots \rho_p \rho_{p+1}^{(1)} \dots \rho_{p+n}^{(1)}} \inner{\tau_1\dots \tau_n,\rho_{p+1}^{(2)}\dots \rho_{p+n}^{(2)}} f_{[\rho_1 \dots \rho_{n+p}]}\;.
\end{align*}
By the symmetry of the expression, we can also simplify this to
\begin{align*}
&\sum_{\rho_1,\dots,\rho_{n+p}} \frac{n!}{(n+p)!}\binom{n+p}{p} \inner{\lambda_1 \dots \lambda_q,  \rho_1 \dots \rho_p \rho_{p+1}^{(1)} \dots \rho_{p+n}^{(1)}} \inner{\tau_1,\rho_{p+1}^{(2)}}\dots \inner{\tau_n,\rho_{p+n}^{(2)}} f_{[\rho_1 \dots \rho_{n+p}]} \\
&= \sum_{\rho_1,\dots,\rho_{n+p}} \frac{1}{p!}\inner{\lambda_1 \dots \lambda_q,  \rho_1 \dots \rho_p \rho_{p+1}^{(1)} \dots \rho_{p+n}^{(1)}} \inner{\tau_1,\rho_{p+1}^{(2)}}\dots \inner{\tau_n,\rho_{p+n}^{(2)}} f_{[\rho_1 \dots \rho_{n+p}]}\;.
\end{align*}
Repeating the same idea on the other factors, this equals
\begin{align*}
&\sum_{\rho_1,\dots,\rho_{n+p}} \frac{1}{p!} \binom{q}{p} \inner{\lambda_1 \dots \lambda_p,  \rho_1 \dots \rho_p } \inner{\lambda_{p+1} \dots \lambda_q,  \rho_{p+1}^{(1)} \dots \rho_{p+n}^{(1)}} \inner{\tau_1,\rho_{p+1}^{(2)}}\dots \inner{\tau_n,\rho_{p+n}^{(2)}} f_{[\rho_1 \dots \rho_{n+p}]}\\
&= \sum_{\rho_1,\dots,\rho_{n+p}} \binom{q}{p} \inner{\lambda_1,  \rho_1} \dots\inner{ \lambda_p,  \rho_p } \inner{\lambda_{p+1} \dots \lambda_q,  \rho_{p+1}^{(1)} \dots \rho_{p+n}^{(1)}} \inner{\tau_1,\rho_{p+1}^{(2)}}\dots \inner{\tau_n,\rho_{p+n}^{(2)}} f_{[\rho_1 \dots \rho_{n+p}]}\;.
\end{align*}
Let $p_1 + \dots + p_n = q-p$ be some partition and let $(\lambda_{p+1},\dots,\lambda_{q}) = (\lambda_1^1,\dots,\lambda_{p_1}^1,\dots,\lambda_1^n,\dots,\lambda_{p_n}^n)$. Then it also follows from the symmetry of the expression that the above equals
\begin{align*}
&\sum_{\rho_1,\dots,\rho_{n+p}} \binom{q}{p}\binom{q-p}{p_1,\dots,p_n} \inner{\lambda_1,  \rho_1} \dots\inner{ \lambda_p,  \rho_p } \inner{\lambda_1^1\dots\lambda_{p_1}^1,  \rho_{p+1}^{(1)} }\dots \inner{\lambda_1^n\dots\lambda_{p_n}^n,  \rho_{p+n}^{(1)} }\\ 
&\qquad\qquad\qquad\qquad \times\inner{\tau_1,\rho_{p+1}^{(2)}}\dots \inner{\tau_n,\rho_{p+n}^{(2)}} f_{[\rho_1 \dots \rho_{n+p}]} \\
& = \sum_{\rho_{p+1},\dots,\rho_{n+p}} \binom{q}{p,p_1,\dots,p_n}\inner{(\lambda_1^1,\dots,\lambda_{p_1}^1)\star \tau_1,  \rho_{p+1} }\dots \inner{(\lambda_1^n,\dots,\lambda_{p_n}^n)\star \tau_n,  \rho_{p+n}}\\&\qquad\qquad\qquad\qquad\qquad\times f_{[\lambda_1 \dots \lambda_p \rho_{p+1}\dots \rho_{p+n}]}\;,
\end{align*}
where we sum over all partitions $p+p_1 + \dots + p_n = q$. We see that \eqref{e:leibniz_right} equals \eqref{e:leibniz_left}, which proves the induction.    
\end{proof}

\subsection{It\^o-Stratonovich correction}\label{subsec:strato}
We can now state and prove the generalised correction formula. In the following, let $f(Y)\cdot dX = \sum_{i=1}^d f_i(Y)dX^i $ for smooth vector fields $f_i: \reals^e \to \reals^e$. As usual, let $f_\tau$ be defined by the recurrence \eqref{e:recurrence} with $f_{\bullet_i} = f_i$ and $f_h = \inner{h,1}\Id + \sum_{\tau \in \trees} \inner{h,\tau}f_\tau$ for any $h\in\hopf^*$. Finally, let $\xbar,\Xbar$ be as in Theorem \ref{thm:branchedgeometric}  
\begin{thm}\label{thm:strato}
Let $\bigy$ with $\inner{1,\bigy}=Y$ be the controlled rough path solution to the RDE
\begin{equ}\label{e:convertRDE}
dY_t = f(Y_t)\cdot dX_t\;,
\end{equ}
driven by a branched rough path $\X$ over $X$. Then $\bigy$ also solves the RDE 
\begin{equ}\label{e:conversion_RDE_1}
dY_t = \fbar(Y_t) \cdot d\xbar_t\;,
\end{equ} 
driven by $\Xbar$, where $\fbar(Y)\cdot d\xbar = \sum_{\tau \in \trees_N} f_\tau(Y)d\xbar^\tau$.\end{thm}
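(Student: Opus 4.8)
The plan is to combine three ingredients already in hand: the Euler-type expansion of an RDE solution (Proposition \ref{prop:euler}), the encoding identity $\innerprod{\X_{st},h}=\innerprod{\Xbar_{st},\psi(h)}$ (Theorem \ref{thm:branchedgeometric}), and the characterisation of geometric RDE solutions (Proposition \ref{prop:georecurrence}, in the extended form of Remark \ref{rmk:different_VS}). First I would apply Proposition \ref{prop:euler} to the solution $\bigy$ of \eqref{e:convertRDE}, obtaining $\delta Y_{st}=\sum_{\tau\in\trees_N}f_\tau(Y_s)\innerprod{\X_{st},\tau}+r_{st}$ with $|r_{st}|=\liloh(|t-s|)$. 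Rewriting each $\innerprod{\X_{st},\tau}$ as $\innerprod{\Xbar_{st},\psi(\tau)}$ and regrouping, exactly as in \eqref{e:ito_strato1}, turns this into $\delta Y_{st}=\sum_{\sigma\in U_{N,N}}f_{\psi^*(\sigma)}(Y_s)\innerprod{\Xbar_{st},\sigma}+r_{st}$, with the \emph{same} remainder. Thus $Y$ is already displayed as an expression controlled by the geometric rough path $\Xbar$, and it remains to show that the coefficient family $\{f_{\psi^*(\sigma)}\}_{\sigma\in U_{N,N}}$ is exactly the family $\{F_\sigma\}$ generated by the recurrence of Remark \ref{rmk:different_VS} with base case $F_\tau=f_\tau$ for $\tau\in\trees_N$; the ``if'' direction of Proposition \ref{prop:georecurrence} then identifies $\bigy$ as the controlled solution of $dY_t=\sum_{\tau\in\trees_N}f_\tau(Y_t)\,d\xbar^\tau_t=\fbar(Y_t)\cdot d\xbar_t$, which is \eqref{e:conversion_RDE_1}.

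The algebraic core is two facts about the transpose $\psi^*\colon T((\B_N))\to\hopf^*$. First, for a single tree $\tau\in\trees_N$, viewed as a length-one tensor in $T^{(N)}(\B_N)$, one has $\psi^*(\tau)=\tau\in\trees^*\subset\hopf^*$: by the triangularity \eqref{e:psi_condition}, $\psi(\rho)$ for a tree $\rho$ equals $\rho$ plus tensors of length $\ge 2$, while $\psi$ of a non-trivial forest is a shuffle of at least two non-zero factors and hence has no length-one part, so the length-one component of $\psi$ only detects single trees. Consequently $f_{\psi^*(\tau)}=f_\tau$ by \eqref{e:fh_def}, which is precisely the required base case. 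Second, since $\psi\colon(\hopf,\cdot,\Delta)\to(T(\B_N),\shuffle,\Deltabar)$ is a morphism of Hopf algebras, the identity $\Deltabar\psi=(\psi\otimestilde\psi)\Delta$ dualises to $\psi^*(\sigma_1\otimes\sigma_2)=\psi^*(\sigma_1)\star\psi^*(\sigma_2)$ (the same computation performed for $\phi_g^*$ in the proof of Proposition \ref{prop:georecurrence}). Combining these, $\psi^*(\tau_1\otimes\sigma')=\tau_1\star\psi^*(\sigma')$ for $\tau_1\in\trees_N$ and $\sigma'\in U_{N,N}$.

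With this in hand, Lemma \ref{lem:L_GL} in the case $q=1$, applied with $h=\psi^*(\sigma')$ and $\lambda_1=\tau_1$, gives
\begin{equ}
Df_{\psi^*(\sigma')}:(f_{\tau_1}) = f_{\tau_1\star\psi^*(\sigma')} = f_{\psi^*(\tau_1\otimes\sigma')}\;,
\end{equ}
which is exactly the recurrence $F_{\tau_1\otimes\sigma'}=F_{\tau_1}\cdot DF_{\sigma'}$ of Remark \ref{rmk:different_VS}. Hence $F_\sigma:=f_{\psi^*(\sigma)}$ satisfies that recurrence with base case $F_\tau=f_\tau$, and an induction on the tensor length of $\sigma$ (using uniqueness of the recurrence) completes the identification; feeding it back into Proposition \ref{prop:georecurrence}/Remark \ref{rmk:different_VS} yields the theorem. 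I do not expect a serious obstacle: the only substantive input beyond bookkeeping is Lemma \ref{lem:L_GL}, already established, so the main care is in aligning the various dualities ($\psi$ versus $\psi^*$, $\cdot$ versus $\star$, $\shuffle$ versus concatenation) and in checking that the base cases, the membership $\psi(h)\in T^{(N)}(\B_N)$, and the $\liloh(|t-s|)$ remainder all propagate verbatim; one should also note in passing that each $f_\tau$ is smooth whenever the $f_i$ are, so that \eqref{e:conversion_RDE_1} is a bona fide RDE.
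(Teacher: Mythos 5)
Your proof is correct and follows essentially the same route as the paper: start from the Euler expansion of Proposition \ref{prop:euler}, convert to $\Xbar$ using the encoding of Theorem \ref{thm:branchedgeometric}, verify the geometric recurrence of Proposition \ref{prop:georecurrence}/Remark \ref{rmk:different_VS} for $F_\sigma=f_{\psi^*(\sigma)}$ via the Hopf-morphism duality $\psi^*(\sigma_1\otimes\sigma_2)=\psi^*(\sigma_1)\star\psi^*(\sigma_2)$ together with Lemma \ref{lem:L_GL} at $q=1$, and use $\psi^*(\tau)=\tau$ to match the base case and the right-hand side of \eqref{e:conversion_RDE_1}. The only cosmetic difference is that you apply $\psi^*(\tau_1)=\tau_1$ before invoking Lemma \ref{lem:L_GL}, whereas the paper momentarily expands $\psi^*(\tau_1)$ over a basis of $\trees_N$ and repackages it afterwards via \eqref{e:fh_def} — these are the same computation.
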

\begin{proof}
As in \eqref{e:ito_strato1}, we have that  
\begin{equ}
\sum_{\tau \in \trees_N} f_{\tau}(Y_s)\inner{\X_{st},\tau} = \sum_{\sigma \in U_{N,N}} f_{\psi^*(\sigma)}(Y_s) \innerprod{\Xbar_{st},\sigma}\;.
\end{equ} 
Therefore, as stated in Remark \ref{rmk:different_VS}, if we can show that the coefficients $f_{\psi^*(\sigma)}$ satisfy 
\begin{equ}
f_{\psi^*(\tau_1 \otimes \dots \otimes \tau_n)} = f_{\psi^*(\tau_1)}\cdot Df_{\psi^*(\tau_2 \otimes \dots \otimes \tau_n)}\;,
\end{equ}
for all $\tau_1 \otimes \dots \otimes \tau_n \in U_{N,N}$ then Proposition \ref{prop:georecurrence} implies that $\bigy$ also solves the RDE
\begin{equ}\label{e:conversion_RDE_2}
dY_t =  \sum_{\tau \in \trees_N} f_{\psi^*(\tau)}(Y_t) d\xbar_t^\tau \;,
\end{equ}
driven by the geometric rough path $\Xbar$. From the definition of $\psi$ found in \eqref{e:psi_definition}, one can easily check that $\psi^*(\tau) = \tau$, so that \eqref{e:conversion_RDE_1} and \eqref{e:conversion_RDE_2} are indeed the same RDE. 
\par
From Lemma \ref{lem:psiexist}, we know that $(\psi\otimes\psi)\Delta = \Deltabar \psi$, the dual of this statement implies that
\begin{align*}
\psi^* (\tau_1\otimes \dots \otimes \tau_n) &= \psi^*(\tau_1)\star \psi^*(\tau_2 \otimes\dots\otimes \tau_n)\\ 
&= \sum_{\sigma_1 \in \trees_N} \innerprod{\psi(\sigma_1),\tau_1} \big(\sigma_1 \star \psi^*(\tau_2 \otimes\dots\otimes \tau_n)\big)\;.
\end{align*}
In light of this, the theorem follows almost immediately from Lemma \ref{lem:L_GL}, where we take $\lambda_1 \dots \lambda_q=\sigma_1$ and $h=\sigma_2 \star \dots \star \sigma_n$. We have that
\begin{align*}
f_{\psi^* (\tau_1\otimes \dots \otimes \tau_n)} &= \sum_{\sigma_1\in\trees_N}\inner{\psi(\sigma_1),\tau_1}  f_{ \sigma_1 \star \psi^*(\tau_2 \otimes\dots\otimes \tau_n)}\\
&=  \sum_{\sigma_1\in\trees_N}\inner{\psi(\sigma_1),\tau_1} f_{\sigma_1}\cdot  Df_{ \psi^*(\tau_2 \otimes\dots\otimes \tau_n)}\\
&= f_{\psi^*(\tau_1)}\cdot Df_{\psi^*(\tau_2 \otimes\dots\otimes \tau_n)}\;.
\end{align*}
This proves the recurrence \eqref{e:geometricrecurrence} and hence completes the proof.
\end{proof}
\begin{rmk}
As with the geometric rough path $\Xbar$, there is some redundancy in the extended vector field $\fbar$. In fact, it is in general possible to choose another geometric rough path $\Xhat$, such that $Y$ also solves an RDE driven by $\Xhat$ that features \emph{fewer} vector fields. For example, if $1/3< \gamma \leq 1/2$, we have that
\begin{align*}
\delta Y_{st} &= f_i(Y_s)\innerprod{\Xbar_{st},\mytreeone{i}\;\;} + f_j^{\alpha}(Y_s) \del^{\alpha} f_i(Y_s) \innerprod{\Xbar_{st},\mytreeone{j}\;\; \otimes\mytreeone{i}\;\;}\\ 
&+ f_j^{\alpha}(Y_s) \del^{\alpha} f_i(Y_s) \innerprod{\Xbar_{st}, \mytreeoneone{j}{i}\;\;}\; + r_{st}\;,
\end{align*}
for $|r_{st}|= \liloh(|t-s|)$. Let us now define $\innerprod{\Xhat,\mytreeone{i}\;\;} = \innerprod{\Xbar,\mytreeone{i}\;\;}$ and 
\begin{equ}
\innerprod{\Xhat,\mytreeone{j}\;\;\otimes\mytreeone{i}\;\;} = \innerprod{\Xbar,\mytreeone{j}\;\;\otimes\mytreeone{i}\;\; + \mytreeoneone{j}{i}\;\;- \mytreeoneone{i}{j}\;\;}\;.
\end{equ}
for all $i,j=1,\dots,d$, and finally, we set
\begin{equ}
\innerprod{\Xhat,\mytreeone{ij}\;\;\;} \defin \innerprod{\Xbar,\mytreeoneone{j}{i}\;\; + \mytreeoneone{i}{j}\;\;}
\end{equ}
for all $i\leq j$. Since we have only changed the higher order components of $\Xbar$ by adding an anti-symmetric $2\gamma$-H\"older path, $\Xhat$ remains geometric. Moreover, we see that
\begin{align*}
\delta Y_{st} &= f_i(Y_s)\innerprod{\Xhat_{st},\mytreeone{i}\;\;} + f_j^{\alpha}(Y_s)\del^\alpha f_i(Y_s) \innerprod{\Xhat_{st},\mytreeone{j}\;\;\otimes \mytreeone{i}\;\;}\\ &+ \frac{1}{2}\big(f_l^{\alpha}(Y_s)\del^\alpha f_k(Y_s)  + f_k^{\alpha}(Y_s)\del^\alpha f_l(Y_s)  \big)\innerprod{\Xhat_{st},\mytreeone{kl}\;\;\;} + r_{st}\;.
\end{align*}
Hence, $Y$ solves the RDE
\begin{equ}
dY_t = f_i(Y_t)d\xhat^i_t +   \frac{1}{2}\big(f_l^{\alpha}(Y_t)\del^\alpha f_k(Y_t)  + f_k^{\alpha}(Y_t)\del^\alpha f_l(Y_t)  \big)d\xhat^{kl}\;,
\end{equ}
where we only sum over $k\leq l$. This is clearly a simpler RDE than the one obtained in Theorem \eqref{thm:strato}, since we sum over a smaller index set than $\trees_2$. It is also more reminiscent of the usual It\^o-Stratonovich correction. 
\par
To put this another way, suppose $X$ is a Brownian motion, or indeed any path for which there is a \emph{canonical} geometric rough path lying above it \cite{victoir10,friz10}; in the case of Brownian motion, this corresponds to constructing Stratonovich integrals. If the extension $\Xbar$ were constructed using this canonical geometric rough path above $X$, then we would recover the classical It\^o-Stratonovich correction. In particular, the antisymmetric part $\frac{1}{2}\innerprod{\Xbar_{st},\mytreeoneone{j}{i}\;\;- \mytreeoneone{i}{j}\;\;}$ would vanish and the symmetric part $\frac{1}{2}\innerprod{\Xbar_{st},\mytreeoneone{j}{i}\;\;+ \mytreeoneone{i}{j}\;\;}$ would be the quadratic variation.      
\par
The simplification is quite easy in the case $N=2$, but the procedure becomes much more complicated for larger $N$, and we can see no natural method of generalising this simplification for larger $N$.
\end{rmk}

\bibliographystyle{./Martin}
\bibliography{ito}

\end{document}